\documentclass[a4paper,11pt]{article}
\usepackage[utf8]{inputenc}
\usepackage[T1]{fontenc}

\usepackage[affil-it]{authblk}
\usepackage{indentfirst}
\usepackage{enumitem}
\usepackage{amsmath}
\usepackage{amssymb,amsmath,amsthm}
\usepackage{ebproof}
\usepackage{comment}
\usepackage{aliascnt}
\usepackage{hyperref}
\usepackage[capitalise]{cleveref}
\usepackage{ragged2e}
\usepackage{adjustbox}
\usepackage[style=alphabetic,maxalphanames=4,minalphanames=4]{biblatex}
\usepackage [autostyle, english = american]{csquotes}
\MakeOuterQuote{"}

\usepackage{tikz-cd}
\usetikzlibrary{decorations.markings,intersections}
\usetikzlibrary{matrix}
\usetikzlibrary{arrows}
\usetikzlibrary{arrows.meta}
\usetikzlibrary{decorations.pathmorphing,shapes}
\usetikzlibrary{decorations.markings}
\tikzset{shorten <>/.style={shorten >=#1,shorten <=#1}}
\tikzset{every picture/.prefix code=\DisableQuotes}

\tikzset{spanmap/.style={
            decoration={markings,
            mark= at position 0.5 with {
                  \node[transform shape] (tempnode) {$|$};

                  }
              },
              postaction={decorate}
}
}

\usepackage{quiver}
\usepackage{xcolor}

\usepackage[mathcal]{euscript}

\usepackage{amssymb}
\usepackage{ stmaryrd }
\usepackage{ dsfont }
\usepackage{upgreek}
\usepackage{ mathrsfs }
\usepackage[bb=dsserif]{mathalpha}
\usepackage{relsize}

\let\oldtheorem\newtheorem
\RenewDocumentCommand{\newtheorem}{s m o m O{}}{%
\IfBooleanTF{#1}%
{\oldtheorem{#2}{#4}}%
{\IfNoValueTF{#3}{\oldtheorem{#2}{#4}[#5]}%
{\newaliascnt{#2}{#3}%
\oldtheorem{#2}[#2]{#4}%
\aliascntresetthe{#2}}}}

\title{Non-linear contexts and dependent operads}
\author{El Mehdi Cherradi}
\affil{IRIF - Inria}
\date{}

\newtheorem{theorem}{Theorem}[section]
\newtheorem{proposition}[theorem]{Proposition}
\newtheorem{lemma}[theorem]{Lemma}
\newtheorem{corollary}[theorem]{Corollary}

\theoremstyle{definition}
\newtheorem{definition}[theorem]{Definition}

\theoremstyle{remark}
\newtheorem{remark}[theorem]{Remark}
\newtheorem{example}[theorem]{Example}

\begin{document}

\maketitle

\begin{abstract}
 We study the category $\mathbf{Dep_{aff}}$ of finite posets and opfibrations between them, and prove that it is a perfect operator category in the sense of Barwick. Based on its additional properties, we propose a definition for a dependent operator category, and formulate a notion of dependent affine operad in a fibered way. In the particular case of $\mathbf{Dep_{aff}}$, we investigate the basic theory of such operads, establishing that the category of algebras for an operad is monadic over a canonical presheaf topos of "dependent sets". We finally extend this theory to a setting where we allow variable duplication, which we refer to as general dependent operads.
\end{abstract}

\tableofcontents

\newpage 

\section*{Introduction}
\addcontentsline{toc}{section}{Introduction}

The starting point of the present work is the fibered definition for operads, as introduced by Lurie \cite{lurie2017higher}, and the notion of operator category defined by Barwick \cite{barwick2018operator} that abstracts away the main features of the category $\mathbf{FinSet}_*$ of pointed finite sets used by Lurie in his definition.

The observation is then the following: while an operator category expresses operations that ought to be defined by their points (that is, global sections of an object $X$ of an operator category play a key role in the theory), we may want to consider notions of arity that do not reduce to points. In particular, in dependent type theory, types may depend on other types so that forgetting the dependency does not make sense in general (i.e., a dependent type cannot directly be thought of as a "single point", but rather as an "arrow").

Despite the key role played by dependencies in dependent type theory, one may have the impression that its formalism stops halfway through in that it only considers linear contexts, that is, a finite list of types where each type can depend on the previous ones. Of course, thanks to substitutions, we may still be able to express situations where a type depends on two independent types: for example, if $A$ and $B$ are types in the empty context, we can define a type $C$ in context $A, p_A^* B$, where $A, p_A^* B$ is the context obtained by substituting $B$ along the projection $A \to \emptyset$ from the context with only the type $A$ to the empty context. However, not only does this way of proceeding break the natural symmetry (forcing us to choose between $A, p_A^* B$ and $B, p_B^* A$ as the context for $C$), but it seems to forget the precise nature of dependency between $A$, $B$ and $C$. In particular, there is no distinction between "real" dependency and dependency created via substitution along projections, which feels like an artificially forced kind of dependency.

For these reasons, it seems very reasonable to refuse to collapse arbitrary dependency relations, which we believe can be expressed better via a finite poset rather than via a finite linear order. This suggests studying a category of finite posets as a candidate for a \textit{dependent} operator category where finite posets $P$ play the role of arities. In this setting, our claim is that the natural notion of morphism to consider consists of opfibrations, accounting for the fact that, in particular, sub-contexts of a context of shape $P$ should gather all the dependencies of the types they pick (i.e. such inclusions should form cosieves).

An important thing to note regarding the terminology is that what we call "affine dependent operads" in this document really generalizes the notion of colored PROPs (i.e. we allow not only a poset-shaped context for the input, but also the same for the output). Moreover, our proposed notion of "general" dependent operads adds variable duplication compared to the affine case, which is usually something that is not considered in the context of "non-dependent" operads, but seems like the more important notion in the dependent case (for instance, categories cannot be expressed in an affine way because the construction of identity morphisms "duplicates" the input object, making it available both as the domain and the codomain of the corresponding identity arrow).

\section*{Outline}

In the first section, we introduce and study the category $\mathbf{Dep_{aff}}$ of finite posets and opfibrations. Interestingly, it enjoys key properties that were not needed in the case of operator categories in the sense of Barwick, such as regularity.

In the second section, we prove that $\mathbf{Dep_{aff}}$ is a perfect operator category in the sense of Barwick, and propose a definition for a (perfect) dependent operator category with the idea that $\mathbf{Dep_{aff}}$ plays, in the dependent setting, the role that $\mathbf{FinSet}$ plays in the non-dependent one. 

In the third section, relying on the fibered presentation of operads à la Lurie, we propose a notion of dependent affine operad. 

In the fourth section, we study the basic theory of dependent operads over $\mathbf{Dep_{aff}}$, establishing notably that the category of algebras over such an operad is monadic over a presheaf topos that represents a category of "dependent sets", in an appropriate sense we introduce, based on the signature corresponding to a given operad.

In the fifth section, we aim to allow variable duplication by studying a monad on $\mathbf{FinPos}$ that plays the same role as the Leinster monad of a perfect operator category. In particular, we obtain an "inert/active" weak factorization of the corresponding Kleisli category.

In the last section, we recast the definitions and results of Sections 3 and 4 in this "general" setting, giving the important example of the operad for categories.

\section{The category $\mathbf{Dep_{aff}}$}

We first construct a category modeling dependent operations. Our objective, at first, is to be able to work in the framework of Barwick, introduced in \cite{barwick2018operator}.

\begin{definition}
 We define a category $\mathbf{Dep_{aff}}$ as follows:
 \begin{itemize}
  \item The objects are the finite posets.
  \item The morphisms $F \to F'$ are the opfibrations between such posets.
 \end{itemize}
 \end{definition}
 
 \begin{remark}
  We think of the posets as (generalized) contexts, their objects as types, and we think of the order as witnessing dependencies. As for the morphisms, for each type in the codomain and each dependency of such a type, a universal corresponding dependency should be provided for every preimage in the domain of the type we started with (this is why the notion of Grothendieck opfibration is relevant).

 \end{remark}
 
 \begin{proposition}
 \label{finitely_complete}
  $\mathbf{Dep_{aff}}$ is finitely complete.
 \end{proposition}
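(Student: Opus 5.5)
We show that $\mathbf{Dep_{aff}}$ has a terminal object and pullbacks, which suffices. Both will be computed as in $\mathbf{FinPos}$ (the category of finite posets and monotone maps). The real work is checking that the resulting cone consists of opfibrations and that the universal property holds within opfibrations.

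Recall that a monotone map $p : E \to B$ of posets is an opfibration iff for every $e \in E$ and every $b \geq p(e)$ there is an $e' \geq e$ with $p(e') = b$ that is opcartesian. For posets this means: for every $e'' \geq e$ with $p(e'') \geq b$, we have $e' \leq e''$. Such an $e'$ is then unique.

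\textbf{Terminal object.} The one-point poset $1$ is terminal. The unique map $P \to 1$ is an opfibration, since the lift of $\ast \leq \ast$ at $e$ is $e$ itself, which is trivially opcartesian. Uniqueness of maps into $1$ is clear.

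\textbf{Pullbacks.} Given opfibrations $f : P \to R$ and $g : Q \to R$, form the poset pullback
\[ P \times_R Q = \{(x,y) \mid f(x) = g(y)\} \]
with the componentwise order. It is finite.

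First we show the projection $\pi_1$ is an opfibration (and symmetrically $\pi_2$). Let $(x,y)$ be given and $x' \geq x$. Let $y'$ be the opcartesian lift of $f(x) \leq f(x')$ at $y$ along $g$. Then $(x',y')$ lies in the pullback and lies above $(x,y)$. To see it is opcartesian, take $(x'',y'') \geq (x,y)$ with $x'' \geq x'$. Then $y'' \geq y$ and $g(y'') = f(x'') \geq f(x')$, so opcartesianness of $y'$ gives $y' \leq y''$.

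Next the universal property. Let $h : S \to P$ and $k : S \to Q$ be opfibrations with $fh = gk$. The induced monotone map $\langle h,k\rangle$ is unique already in $\mathbf{FinPos}$. It remains to show it is an opfibration. Given $s$ and $(x',y') \geq (h(s),k(s))$, take the opcartesian lift $s_1$ of $h(s) \leq x'$ along $h$. Then $k(s_1) \geq k(s)$, and $g(k(s_1)) = f(x') = g(y')$.

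Here we use a lemma: the opcartesian lift of $g(y) \leq g(y')$ at $y$ is the least element of the fiber over $g(y')$ above $y$. Since $y'$ lies in that fiber and above $k(s)$, the lift from $k(s)$ lies below $y'$.

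This shows only $k(s_1) \leq y'$, not equality. So we lift again: take $s_2$ the opcartesian lift of $k(s_1) \leq y'$ along $k$. We then need $h(s_2) = x'$. We have $fh(s_2) = gk(s_2) = g(y') = f(x')$ and $h(s_2) \geq x'$. That alone does not force equality, so the two lifts must be combined more cleverly.

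\textbf{Main obstacle.} The expected difficulty is exactly this step: constructing a common opcartesian lift for $h$ and $k$ simultaneously. The plan is to use opcartesianness of $h$ and $k$ together with the fact that $fh = gk$ is itself an opfibration, whose opcartesian lifts are the composite lifts. Concretely, $s_1$ should coincide with the $fh$-opcartesian lift of $f(h(s)) \leq f(x')$ at $s$ after correcting by $y'$. One would then iterate, relying on finiteness to stabilize, to get an element $s^\ast$ with $h(s^\ast) = x'$ and $k(s^\ast) = y'$ that is minimal above $s$.

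If this fails, a safer route is to use the characterization that a monotone map between finite posets is an opfibration iff the restriction to each up-set $\uparrow e$ is an opfibration over $\uparrow p(e)$ with a least element in each fiber. Then compare up-sets in $S$ and in $P \times_R Q$ directly. Opcartesianness of $\langle h,k\rangle$ would reduce to verifying the lifting condition on these up-sets.
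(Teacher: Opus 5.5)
\textbf{Gap: the induced map need not be an opfibration, and the statement itself fails.}

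Your plan is the same as the paper's. You form the pullback in posets, check that the projections are opfibrations, and then try to show the induced map $\langle h,k\rangle$ is an opfibration. Your arguments for the terminal object and for the projections are fine. The step you flag as the main obstacle is not just hard, it is false, so neither an iteration nor the up-set reformulation can close it.

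Take $R=1$, $S=P=Q=\mathbb{2}=\{0<1\}$ and $h=k=\mathrm{id}$. Then $P\times_R Q=\mathbb{2}\times\mathbb{2}$ and $\langle h,k\rangle$ is the diagonal. At $s=0$ the element $(1,0)\geq(0,0)$ has no preimage at all, so it has no lift. The same example refutes your intermediate inequality $k(s_1)\leq y'$: for $(x',y')=(1,0)$ you get $s_1=1$ and $k(s_1)=1\not\leq 0$. Your lemma only bounds by $y'$ the $g$-opcartesian lift out of $k(s)$, and $k(s_1)$ need not be that lift.

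The paper's proof breaks at the corresponding point. It asserts that the image under the induced map of an $h_0$-cocartesian lift is cocartesian for the projection $f'$. Here that image is $(0,0)\leq(1,1)$, while the cocartesian lift for the first projection is $(0,0)\leq(1,0)$.

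In fact no other choice of cone works either: $\mathbb{2}$ has no product with itself in $\mathbf{Dep_{aff}}$, so the proposition as stated is false.
\begin{itemize}
\item Suppose $(\Pi,\pi_0,\pi_1)$ were such a product. Let $X$ be the poset $\bot<l_1,l_2,l_3<\top$ with the $l_i$ pairwise incomparable. Let $p_U,p_V:X\to\mathbb{2}$ be the characteristic maps of the up-sets $U=\{l_1,\top\}$ and $V=\{\top\}$; both are opfibrations. Let $m:X\to\Pi$ be the induced morphism.
\item For $j=2,3$ the cosieve inclusion $\iota_j:\mathbb{2}\cong\{l_j,\top\}\subseteq X$ satisfies $p_U\iota_j=p_V\iota_j=\mathrm{id}$. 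Uniqueness in the universal property gives $m\iota_2=m\iota_3$, i.e.\ $m(l_2)=m(l_3)=:c$.
\item Since $m\iota_2$ is an opfibration, ${\uparrow}c\subseteq\{c,m(\top)\}$. Comparing projections shows $m(l_1)$ is neither $c$ nor $m(\top)$. Hence $m(l_1)\not\geq c$, and therefore $m(\bot)<c$.
\item The opcartesian lift of $m(\bot)\leq c$ at $\bot$ would then have to lie below both $l_2$ and $l_3$. So it would equal $\bot$, which does not map to $c$, a contradiction.
\end{itemize}

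What does survive is the existence of pullbacks along cosieve inclusions. There the induced map is a corestriction of an opfibration to an up-set, which is again an opfibration. General pullbacks, and in particular binary products, do not exist.
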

 
 \begin{proof}
  The one object poset is clearly a terminal object. We now turn to the existence of pullbacks. Given maps $f : J \to I$ and $g : K \to I$ in $\mathbf{Dep_{aff}}$, we will prove that the following pullback square computed in $\mathbf{Cat}$
\[\begin{tikzcd}[ampersand replacement=\&]
	L \&\& J \\
	\\
	K \&\& I
	\arrow["{g'}", from=1-1, to=1-3]
	\arrow["{f'}"', from=1-1, to=3-1]
	\arrow["\ulcorner"{anchor=center, pos=0.125}, draw=none, from=1-1, to=3-3]
	\arrow["f", from=1-3, to=3-3]
	\arrow["g"', from=3-1, to=3-3]
\end{tikzcd}\]
  defines a pullback square in $\mathbf{Dep_{aff}}$. $L$ clearly inherits the properties of being a finite poset. 

Moreover, $f'$ and $g'$ are opfibrations (as pullbacks of opfibrations). The square above hence lies in $\mathbf{Dep_{aff}}$. The only thing left to check is that it has the correct universal property in $\mathbf{Dep_{aff}}$. To see this, consider a cone
\[\begin{tikzcd}[ampersand replacement=\&]
	A \&\&\& \\
	\& L \&\& J \\
	\\
	\& K \&\& I
	\arrow["m", dashed, from=1-1, to=2-2]
	\arrow["{h_1}", curve={height=-12pt}, from=1-1, to=2-4]
	\arrow["{h_0}"', curve={height=12pt}, from=1-1, to=4-2]
	\arrow["{g'}", from=2-2, to=2-4]
	\arrow["{f'}"', from=2-2, to=4-2]
	\arrow["\ulcorner"{anchor=center, pos=0.125}, draw=none, from=2-2, to=4-4]
	\arrow["f", from=2-4, to=4-4]
	\arrow["g"', from=4-2, to=4-4]
\end{tikzcd}\]
where $h_0$ and $h_1$ are arrows in $\mathbf{Dep_{aff}}$. To see that $m$ is an opfibration, consider an arrow $u : m(a) \to l$ in $L$. We may factor $u$ as an $f$'-cocartesian arrow followed by an $f'$-vertical one $u'$. Further factoring $u'$ as a $g'$-cocartesian arrow followed by a $g'$-vertical one, the image of this latter composite through $f'$ is the identity, which implies that both arrows are  mapped to identities by $f'$ since there are no non-trivial retractions in the poset $K$. Overall, we have a factorization of $u$ as an $f'$-cocartesian arrow, followed by a $g'$-cocartesian one, followed by an arrow that is vertical both with respect to $f'$ and $g'$. The latter arrow must be an identity since the square we consider is a pullback square.

Therefore, we may treat separately the case where $u$ is $f'$-cocartesian and where it is $g'$-cocartesian. Assume that $u$ is $f'$-cocartesian, and consider a cocartesian lift $v : a \to b$ of $f'(u) : h_0(a) \to f'(l)$. Then, $m(v)$ is $f'$-cocartesian, so it coincides with $f'(u)$ since there are no non-trivial isomorphisms in the poset $L$. Moreover, $m(v)$ being $f'$-cocartesian and $v$ being $h_0$ cocartesian jointly imply that $v$ is $m$-cocartesian.
By symmetry, we can also take care of the case where $u$ is $g'$-cocartesian.

Overall, $m$ is an opfibration, which concludes the proof.
\end{proof}

\begin{remark}
 Since the pullback in $\mathbf{Dep_{aff}}$ are computed in $\mathbf{Cat}$, it follows that monomorphisms in $\mathbf{Dep_{aff}}$ are also monomorphisms in $\mathbf{Cat}$. Conversely, the opfibrations that are monomorphisms in $\mathbf{Cat}$ are monomorphisms in $\mathbf{Dep_{aff}}$. Since such functors must be surjective on morphisms, those correspond precisely to the functors that are fully faithful and injective on objects. 
\end{remark}

\begin{proposition}
\label{regular}
 $\mathbf{Dep_{aff}}$ is a regular category.
\end{proposition}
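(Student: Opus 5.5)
We already have finite limits by \cref{finitely_complete}. It remains to build a factorization of every morphism as a regular epimorphism followed by a monomorphism, and to show that regular epimorphisms are stable under pullback. The plan is to identify the regular epimorphisms with the opfibrations that are surjective on objects. The factorization is then the image factorization, and pullback stability follows because pullbacks are computed in $\mathbf{Cat}$.

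\textbf{Step 1: the factorization.} Given an opfibration $f : J \to I$, let $f(J) \subseteq I$ be its set-theoretic image with the induced order, and factor $f$ as $J \xrightarrow{e} f(J) \xrightarrow{i} I$. To see that $f(J)$ is a cosieve in $I$, take $f(j) \le x$; a cocartesian lift of this arrow lands on a preimage of $x$. So $i$ is an upward-closed full inclusion. Such an inclusion is an opfibration, and by the remark above it is a monomorphism. Next we check that $e$ is an opfibration. For $e(j) \le y$ in $f(J)$, the $f$-cocartesian lift serves as an $e$-cocartesian lift, since cocartesianness is tested against arrows of $J$ and their images, which all lie in $f(J)$. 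Finally, $e$ is surjective on objects, and also surjective on arrows because of the lifts.

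\textbf{Step 2: surjective opfibrations are regular epimorphisms.} Let $e : J \to I$ be surjective on objects, and let $J \times_I J$ be its kernel pair, which is computed in $\mathbf{Cat}$ by \cref{finitely_complete}. We show that $e$ is the coequalizer of the two projections. Let $h : J \to X$ be an opfibration that coequalizes them. Then $h$ is constant on fibres, so it induces a function $\bar h : I \to X$ on objects. It is monotone: every $x \le y$ in $I$ lifts along $e$ to some $j \le j'$, using surjectivity together with a cocartesian lift.

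The main obstacle is showing that $\bar h$ is an opfibration. Given $\bar h(x) \le z$, pick $j \in e^{-1}(x)$ and take the $h$-cocartesian lift $j \le j'$. We set the lift of $\bar h(x) \le z$ to be $x \le e(j')$. We must then check that it is $\bar h$-cocartesian, that is, that $x \le y$ with $z \le \bar h(y)$ forces $e(j') \le y$. To prove this, take the $e$-cocartesian lift $j \le j''$ of $x \le y$, and apply $h$-cocartesianness of $j \le j'$ to conclude $j' \le j''$. Applying $e$ then gives $e(j') \le y$. Since the source poset is finite and thin, uniqueness of the induced map is automatic. Hence $e$ is a coequalizer. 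If the converse is needed, note that any regular epimorphism factors through its image, and the monomorphism part must then be invertible, so regular epimorphisms are exactly the surjective opfibrations. Consequently Step 1 gives (regular epi, mono) factorizations.

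\textbf{Step 3: pullback stability.} Pullbacks are computed in $\mathbf{Cat}$, so their objects are computed as in $\mathbf{Set}$. Surjectivity on objects is therefore preserved by pulling back, and by Step 2 regular epimorphisms are stable under pullback. This completes regularity.
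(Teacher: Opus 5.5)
Your proof is correct, but it takes a different route from the paper's.

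The paper works with strong rather than regular epimorphisms. It shows that a surjective-on-objects opfibration has the left lifting property against monomorphisms. For this it borrows the lift from the orthogonality in $\mathbf{Cat}$ between surjective-on-objects functors and fully faithful injective-on-objects functors. It then checks that this lift is again an opfibration: $B\times_J I\to B$ is an isomorphism, so the lift is a pullback of an opfibration. A retract argument gives the converse, and the paper concludes with Joyal's result that a finitely complete category with a pullback-stable (strong epi, mono) factorization system is regular.

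You instead verify by hand that a surjective opfibration is the coequalizer of its kernel pair. Your key computation is that the induced monotone map $\bar h$ is an opfibration, and you then use the standard characterization of regularity: (regular epi, mono) factorizations plus pullback-stability of regular epis.

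Your route is more elementary and self-contained. It needs no external theorem and exhibits coequalizers of kernel pairs explicitly. The paper's route directly yields the orthogonal (strong epi, mono) factorization system, which it invokes later, for instance when showing that $|-|$ is regular. Since strong and regular epimorphisms coincide in a regular category, neither approach loses anything.

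One minor point: uniqueness of $\bar h$ comes from $e$ being surjective on objects and $X$ being thin, not from the source being finite.
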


\begin{proof}
 Given a map $F : I \to J$ in $\mathbf{Dep_{aff}}$, we may factor through its image:

\[\begin{tikzcd}[ampersand replacement=\&]
	I \& {\mathbf{Im} F} \& J
	\arrow[from=1-1, to=1-2]
	\arrow["F", curve={height=-18pt}, from=1-1, to=1-3]
	\arrow[from=1-2, to=1-3]
\end{tikzcd}\]
The first component of the bottom composite is surjective on objects, and the second is a monomorphism (in $\mathbf{Dep_{aff}}$). It is not difficult to see that the first component is also an opfibration: to lift a map in $\mathbf{Im}F$, it is enough to consider it as a map in $J$ and use the fact that $F$ is an opfibration to obtain a cocartesian lift, which will be cocartesian since every map in $\mathbf{Im}F$ is cocartesian relative to the second inclusion.

We claim that the strong epimorphisms are the maps in $\mathbf{Dep_{aff}}$ that are surjective on objects. To see this, consider a lifting problem as in the diagram below,
\[\begin{tikzcd}[ampersand replacement=\&]
	A \&\&\& \\
	\& {B \times_J I} \&\& I \\
	\\
	\& B \&\& J
	\arrow[dashed, from=1-1, to=2-2]
	\arrow[curve={height=-12pt}, from=1-1, to=2-4]
	\arrow["P"', curve={height=12pt}, from=1-1, to=4-2]
	\arrow[from=2-2, to=2-4]
	\arrow[from=2-2, to=4-2]
	\arrow["\ulcorner"{anchor=center, pos=0.125}, draw=none, from=2-2, to=4-4]
	\arrow["F", from=2-4, to=4-4]
	\arrow[curve={height=-6pt}, dashed, from=4-2, to=2-2]
	\arrow["H"{description}, dashed, from=4-2, to=2-4]
	\arrow[from=4-2, to=4-4]
\end{tikzcd}\]
with $H$ a solution in $\mathbf{Cat}$ (since functors surjective on objects have the left lifting property against the fully faithful functors monic on objects). This gets us a section of the functor $B \times_J I \to B$, which is therefore an isomorphism since it is both fully faithful and monic on objects (as a pullback of $F$) and surjective on objects (since it has a section). This shows that $H$ is an opfibration as a pullback of the bottom opfibration: it is also a solution to the lifting problem in $\mathbf{Dep_{aff}}$. Therefore, a functor in $\mathbf{Dep_{aff}}$ that is surjective on objects has the left lifting property against the monomorphisms. 

For the converse, we use the usual retract argument. Consider a functor $P : A \to B$ having the left lifting property against the monomorphisms. Factoring $P$ through its image, we may consider the following lifting problem,
\[\begin{tikzcd}[ampersand replacement=\&]
	A \&\& {\mathbf{Im}P} \\
	\\
	B \&\& B
	\arrow[from=1-1, to=1-3]
	\arrow["P"', from=1-1, to=3-1]
	\arrow[from=1-3, to=3-3]
	\arrow["H"{description}, dashed, from=3-1, to=1-3]
	\arrow[from=3-1, to=3-3]
\end{tikzcd}\]
and a solution $H$ exhibiting $P$ as a retract of $A \to \mathbf{Im}F$. Therefore, $P$ is also surjective on objects (the class of such functors is retract stable since it is given by a right lifting property).

Since the strong epimorphisms are also stable under pullback, we can conclude: indeed, a result by Joyal \cite{252842} tells us that a finitely complete category with a pullback-stable factorization system (strong epimorphism, monomorphism) is a regular category. 
\end{proof}

We write $\mathbf{Dep_{aff}^+}$ for the wide subcategory of $\mathbf{Dep_{aff}}$ whose morphisms are the monomorphisms, and $\mathbf{Dep_{aff}^{elm}}$ for the full subcategory of the former spanned by the finite posets admitting an initial object.
For $I$ an object of $\mathbf{Dep_{aff}}$, we consider the diagram $D_I$ defined below:
\[\begin{tikzcd}[ampersand replacement=\&]
	{\iota \downarrow I} \&\& {\mathbf{Dep_{aff}^{elm}}} \\
	\\
	{*} \&\& {\mathbf{Dep_{aff}^+}}
	\arrow["{D_I}", from=1-1, to=1-3]
	\arrow[from=1-1, to=3-1]
	\arrow[between={0.3}{0.7}, Rightarrow, from=1-3, to=3-1]
	\arrow["{\iota }", from=1-3, to=3-3]
	\arrow["I", from=3-1, to=3-3]
\end{tikzcd}\]

\begin{proposition}
\label{elm_diag_depaff}
 The image $D_I'$ of the canonical cocone on $D_I$ with apex $I$ through the inclusion $\mathbf{Dep_{aff}^{elm}} \subset \mathbf{Dep_{aff}}$ is a colimiting cocone. 
\end{proposition}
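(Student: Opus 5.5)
The plan is to identify the indexing category $\iota \downarrow I$ concretely, and then verify the universal property of the cocone by hand. The verification comes down to the fact that cocartesianness in a poset can be tested inside an upward-closed subset.

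First, I describe $\iota\downarrow I$. By the remark following \cref{finitely_complete}, a monomorphism $E \to I$ in $\mathbf{Dep_{aff}}$ is a functor that is fully faithful and injective on objects, and also an opfibration. Such a map identifies $E$ with a full sub-poset of $I$ that is upward closed, since arrows out of points of $E$ must lift. If $E$ moreover has an initial object $e$, its image is exactly the principal cosieve ${\uparrow}x=\{y \mid x\le y\}$, where $x$ is the image of $e$. A morphism in $\iota\downarrow I$ between two such objects is a mono over $I$, so it is necessarily the inclusion ${\uparrow}y\subseteq{\uparrow}x$, which exists (uniquely) iff $x\le y$. Hence $\iota\downarrow I$ is equivalent to $I^{op}$, and $D_I'$ is the cocone $x\mapsto {\uparrow}x \hookrightarrow I$. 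Since colimits are invariant under replacing the indexing category by an equivalent one, it suffices to treat this diagram.

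Next, I take a cocone with apex $T$ in $\mathbf{Dep_{aff}}$, given by opfibrations $\varphi_x : {\uparrow}x\to T$ satisfying $\varphi_x|_{{\uparrow}y}=\varphi_y$ for $x\le y$. I define $\varphi : I\to T$ by $\varphi(x)=\varphi_x(x)$.
\begin{itemize}
\item \emph{Monotonicity.} If $x\le y$, then $\varphi(y)=\varphi_y(y)=\varphi_x(y)\ge\varphi_x(x)=\varphi(x)$.
\item \emph{Factorization.} The same compatibility shows $\varphi|_{{\uparrow}x}=\varphi_x$, so $\varphi$ factors the cocone.
\item \emph{Uniqueness.} Every $x$ lies in ${\uparrow}x$, so any factoring map must send $x$ to $\varphi_x(x)$.
\end{itemize}

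The remaining step, and the only one needing care, is that $\varphi$ is an opfibration. Let $\varphi(x)\le t$ in $T$. Since $\varphi_x$ is an opfibration, there is some $y\in{\uparrow}x$ with $\varphi_x(y)=t$ such that $x\le y$ is $\varphi_x$-cocartesian in ${\uparrow}x$. In a poset, this means: for every $z\ge x$ in ${\uparrow}x$ with $\varphi(z)\ge t$, we have $y\le z$. The cocartesian condition for $\varphi$ in $I$ quantifies over the same elements, namely all $z\ge x$ in $I$. These all lie in ${\uparrow}x$ precisely because ${\uparrow}x$ is upward closed. Hence the lift is $\varphi$-cocartesian as well, so $\varphi$ is a morphism of $\mathbf{Dep_{aff}}$, and $D_I'$ is colimiting.
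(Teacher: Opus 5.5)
Your proof is correct and follows the paper's argument. The paper likewise sends $i$ to the image of the initial object of $I_{i/}$ under the corresponding cocone leg. It likewise obtains the cocartesian lifts from those of $I_{i/} \to J$, which stay cocartesian in $I$ because $I_{i/}$ already contains every element above $i$. Your identification of the indexing category with $I^{op}$ and your explicit monotonicity and factorization checks just spell out steps the paper treats as evident.
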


\begin{proof}
Consider a cocone on $D_I'$ with apex $J$ and consider an object $i$ of $I$. The cosieve inclusion $I_{i/} \subset I$ is an object of the diagram, so we have a map $I_{i/} \to J$. We define $m(i)$ to be the image of the initial object of $I_{i/}$ under this map. This mapping clearly extends to a functor $m : I \to J$ that yields a factorization of the cocone through the canonical cocone on $D_I'$ with apex $I$. To show that $m$ is an opfibration, consider an object $i \in I$ and an arrow $m(i) \to j$ in $J$. Using that $I_{i/} \to J$ is an opfibration, we get a cocartesian lift $i \to i'$ in $I_{i/}$ whose image under the full inclusion $I_{i/} \subset I$ is therefore still cocartesian (with respect to $m$).
It is also clear that this factorization is the unique possible one.

This proves that $I$ is canonically the colimit of $D_I'$.
\end{proof}

\begin{remark}
 Note that $D_I'$ is also a colimit cocone in $\mathbf{FinPos}$.
\end{remark}

\section{Dependent operator category}
 
 In this section, we will show that $\mathbb{Dep_aff}$ is a perfect operator category in the sense of Barwick, and then observe that it moreover enjoys additional properties. 
 
 Recall the following definition:
 
 \begin{definition}[{\cite[Definition 1.2]{barwick2018operator}}]
  An \textbf{operator category} $\Phi$ is an essentially small category that satisfies the following three conditions:
  \begin{enumerate}
   \item The category $\Phi$ has a terminal object.
   \item For any morphism $J \to I$ of $\Phi$ and for any point $i \in |I| := Hom_\Phi(1,I)$, there exists a fiber $J_i := \{i\} \times_I J$.
   \item For any pair of objects $I,\;J \in \Phi$, the set $Hom_\Phi(J,I)$ is finite.
  \end{enumerate}  
 \end{definition}
 
 Our definition for $\mathbf{Dep_{aff}}$ falls into this framework:
 
  \begin{proposition}
 \label{dep_opc}
  $\mathbf{Dep_{aff}}$ is an operator category.
 \end{proposition}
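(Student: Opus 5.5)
We check the three conditions of Barwick's definition for $\mathbf{Dep_{aff}}$ one at a time, using \cref{finitely_complete} for the first two.

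\emph{Essential smallness.} Every finite poset is isomorphic to a poset structure on some $\{1,\dots,n\}$. There are only countably many such structures, so the isomorphism classes of objects form a set. Since isomorphisms of posets are opfibrations, every object of $\mathbf{Dep_{aff}}$ is isomorphic in $\mathbf{Dep_{aff}}$ to one of these representatives. Hence $\mathbf{Dep_{aff}}$ is essentially small.

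\emph{Terminal object and fibers.} \cref{finitely_complete} gives both conditions (1) and (2). The one-object poset $1$ is terminal: the unique functor $I \to 1$ is trivially an opfibration, since every arrow $i \to i'$ in $I$ lies over the identity and the identity at $i$ is a cocartesian lift of it. Existence of pullbacks then gives all fibers $J_i = \{i\}\times_I J$ for any point $i : 1 \to I$. The proof of \cref{finitely_complete} computes these in $\mathbf{Cat}$, so $J_i$ is just the subposet of $J$ lying over $i$. Notice that the points $i : 1 \to I$ are all of the form "pick an object". A functor $1 \to I$ is an opfibration exactly when every arrow out of its image lifts. Only identities exist in $1$, so this forces $i$ to be maximal in $I$. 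This observation is not needed for the proof; it only identifies $|I|$ with the set of maximal elements.

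\emph{Finiteness of hom-sets.} For finite posets $J$ and $I$, functors $J \to I$ are determined by their action on objects. They are order-preserving maps, so there are at most $|I|^{|J|}$ of them. The opfibrations form a subset, which is therefore finite.

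There is no genuine obstacle here; the only substantive input is \cref{finitely_complete}, which we are allowed to assume. The one point needing care is the existence of the pullback $\{i\}\times_I J$ inside $\mathbf{Dep_{aff}}$, and not merely in $\mathbf{Cat}$. That point is precisely the universal-property argument already carried out in the proof of \cref{finitely_complete}.
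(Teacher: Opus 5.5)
Your decomposition is the same as the paper's one-line proof: local finiteness by counting order-preserving maps, and the terminal object and fibers read off from \cref{finitely_complete}. Your explicit check of essential smallness is a harmless addition. The trouble is that \cref{finitely_complete} is false, and this affects the paper's proof in the same way. So the step you single out as the only delicate one is not covered by the ``universal-property argument already carried out'' there. That step is showing that $\{i\}\times_I J$ is a pullback in $\mathbf{Dep_{aff}}$ and not merely in $\mathbf{Cat}$.

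Here is a counterexample. Take $I=*$, $J=K=\mathbb{2}$ and the cone $h_0=h_1=\mathrm{id}_{\mathbb{2}}$. The induced functor into the $\mathbf{Cat}$-product is the diagonal $\mathbb{2}\to\mathbb{2}\times\mathbb{2}$. It is not an opfibration, since $(0,0)\to(1,0)$ has no lift out of $0$. In the paper's argument, the claim that $m(v)$ is $f'$-cocartesian is what fails here: $m(v)$ is $(0,0)\to(1,1)$. One can moreover show that $\mathbb{2}$ and $\mathbb{2}$ have no product in $\mathbf{Dep_{aff}}$ at all. To see this, test the universal property against $*$, against $\mathbb{2}$, and against the poset with a bottom, a top and three incomparable middle elements.

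The statement is nevertheless true, because fibers over points are a special case you can verify directly. Let $f:J\to I$ be an opfibration and $i:*\to I$ a point. As you note, $i$ is maximal, so $J_i=f^{-1}(i)$ is upward closed in $J$: if $j\in J_i$ and $j\le j'$, then $i\le f(j')$, hence $f(j')=i$. Thus $J_i\hookrightarrow J$ is a full cosieve inclusion, hence an opfibration, and $J_i\to *$ is one as well.

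Now let $h:A\to J$ be an opfibration with image in $J_i$. Its corestriction $m:A\to J_i$ is again an opfibration. Indeed, an $h$-cocartesian lift $a\le a'$ of $m(a)\le y$ satisfies $m(a')=y$. Because $J_i$ is a full subposet, the cocartesianness condition ($a'\le c$ whenever $a\le c$ and $y\le h(c)$) is literally the same for $m$ as for $h$. Uniqueness of $m$ comes from $\mathbf{Cat}$. With this in place of the appeal to \cref{finitely_complete}, the rest of your proof stands.
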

 
 \begin{proof}
  $\mathbf{Dep_{aff}}$ is indeed locally finite. Moreover, since it is finitely complete by \cref{finitely_complete}, it admits in particular a terminal object and fibers.
 \end{proof}

 \begin{remark}
  In $\mathbf{Dep_{aff}}$, the points (i.e. the global sections) correspond to maximal objects of a given poset. This is enforced by the fact that such a global section $* \to I$ must, in particular, lift against the source inclusion $s : \mathbb{1} \to \mathbb{2}$.
 \end{remark}
 
 \begin{definition}[{\cite[Definition 4.1]{barwick2018operator}}]
  A point classifier in a operator category $\Phi$ is a terminal object in the category $\Phi^\text{cons}$ whose objects are the points in $\Phi$ and whose morphism are pullback squares.
 \end{definition}
 
 \begin{lemma}
 \label{point_classifier}
  $\mathbf{Dep_{aff}}$ admits a point classifier.
 \end{lemma}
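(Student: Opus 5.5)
We propose the poset $\mathbb{2} = \{0 < 1\}$, together with the point $t : * \to \mathbb{2}$ picking the maximal element $1$, as the point classifier. The remark following \cref{dep_opc} shows that a point of $I$ is a maximal element $i$. The task is then to show the following: for every maximal $i \in I$ there is a unique opfibration $\chi : I \to \mathbb{2}$ such that the square with $* \to I$ on top, $\chi$ on the right, $t$ on the bottom and $* \to *$ on the left is a pullback. By \cref{finitely_complete} this pullback is computed in $\mathbf{Cat}$, so the condition just says $\chi^{-1}(1) = \{i\}$.

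\textbf{Existence.} Define $\chi(i) = 1$ and $\chi(x) = 0$ for $x \neq i$. This is monotone because $i$ is maximal: the only element above $i$ is $i$ itself, so whenever $x \le y$ and $\chi(x) = 1$ we have $x = y = i$. Next we check that $\chi$ is an opfibration. The only non-identity arrow in $\mathbb{2}$ is $0 \to 1$, and its only possible lift from $x \neq i$ is $x \to i$. So we need $x \le i$ for every $x$, which forces $i$ to be the greatest element of $I$. This fails in general; for instance in a discrete poset with two elements there is no such opfibration.

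So $\mathbb{2}$ is the wrong candidate, and the cocartesian lifting condition must instead be allowed to fail vacuously. Take the classifier to be the two-element \emph{discrete} poset $\{0, 1\}$ with the point $1$. A monotone map $I \to \{0,1\}$ is the same as a decomposition of $I$ into two unions of connected components. Every such map is an opfibration, since the codomain has only identity arrows. The pullback condition then requires $\chi^{-1}(1) = \{i\}$, so $\{i\}$ must be a connected component. Again this fails in general: in $\mathbb{2}$, the maximal point $1$ is not its own component.

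Both naive candidates fail, and the real content lies in the next step. We would look for the classifier among small posets $P$ with a maximal point $p$, and analyse what an opfibration $\chi : I \to P$ with $\chi^{-1}(p) = \{i\}$ forces. Every $x \le i$ must map below $p$, and the cocartesian lift of $\chi(x) \to p$ must land in the fibre $\{i\}$. Elements not below $i$ must map to parts of $P$ from which $p$ cannot be reached. This suggests $P = \{0 < 1\} \sqcup \{\ast\}$, with the point $1$: elements below $i$ go to $0$ (or to $1$ if equal to $i$), and all other elements go to $\ast$.

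We then check that this $\chi$ is monotone. If $x \le y$ with $x \le i$ but $y \not\le i$, then $\chi(x) = 0$ while $\chi(y) = \ast$, and monotonicity breaks. Hence we must take $\ast$ to receive an arrow from $0$, giving the poset $P$ with $0 < 1$ and $0 < \ast$, and with $1$ and $\ast$ incomparable. The remaining steps are these.
\begin{enumerate}
\item Monotonicity of $\chi$: this holds because the down-set of $i$ maps to $\{0,1\}$ and its complement, which is up-closed, maps to $\ast$.
\item The opfibration property: a lift of $0 \to 1$ from $x$ is $x \to i$, which exists by definition of $0$. A lift of $0 \to \ast$ requires some $y \not\le i$ above $x$ that is cocartesian. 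This is the step we expect to be the main obstacle, since such a $y$ must be initial among the elements above $x$ that are not below $i$, and it need not exist.
\item Uniqueness: any classifying map must send the down-set of $i$ to the $\{0,1\}$ part by the lifting argument above.
\end{enumerate}
If step 2 fails, we would enlarge $P$ and try to let the classifier have, over $0$, a copy of each possible configuration, taking the colimit description of \cref{elm_diag_depaff} as a guide.
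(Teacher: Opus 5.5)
Your proposal does not reach a proof. It stops at step 2 with a plan to enlarge $P$, and step 2 genuinely fails. It fails already for $(\mathbb{2},1)$: your $\chi$ sends $0\mapsto 0$, and the arrow $0\to\ast$ of $P$ has no lift at $0\in\mathbb{2}$.

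The general constraints are as follows. An opfibration $\chi\colon I\to T$ restricts to surjections $I_{\geq x}\to T_{\geq \chi(x)}$, so it sends maximal elements to maximal elements. Classifying $(\mathbb{2},1)$ therefore forces an element $a<t$ with $T_{\geq a}=\{a,t\}$. Classifying $x<i$, $x<y$ forces an element $c$ with $T_{\geq c}=\{c,t,d\}$, where $d\neq t$ is maximal. So no three-element candidate can work. (Your reduction of the pullback condition to $\chi^{-1}(t)=\{i\}$ is fine. The fibre over a maximal point is a cosieve, and one checks directly that it is the pullback in $\mathbf{Dep_{aff}}$.)

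More seriously, no enlargement can work, because the lemma is false. Let $(T,t)$ be any pointed finite poset.
\begin{enumerate}
\item Take the discrete poset $\{i,z\}$ pointed at $i$. Its conservative opfibrations to $(T,t)$ are exactly the maps sending $z$ to a maximal element of $T$ other than $t$. Terminality (existence and uniqueness) forces $T$ to have exactly one such element, call it $d$.
\item Now let $I$ have elements $x,i,y_1,y_2$ with $x<i$, $x<y_1$, $x<y_2$ and no other relations, pointed at the maximal element $i$. A conservative opfibration $\chi$ must send $y_1$ and $y_2$ to maximal elements different from $t$, hence both to $d$.
\item Also $\chi(x)<\chi(i)=t$, so $\chi(x)\neq d$ because $d$ is maximal and $d\neq t$. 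Hence $\chi(x)<d$.
\item A cocartesian lift of $\chi(x)\to d$ at $x$ is some $e'\geq x$ with $\chi(e')=d$, so $e'\in\{y_1,y_2\}$. It must also lie below every $e\geq x$ with $\chi(e)\geq d$, hence below both $y_1$ and $y_2$, which is impossible.
\end{enumerate}
Thus $(I,i)$ admits no morphism to $(T,t)$ in $\mathbf{Dep_{aff}}^{\mathrm{cons}}$, and $\mathbf{Dep_{aff}}$ has no point classifier.

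For comparison, the paper's proof uses exactly your third candidate $\{0<1\}\sqcup\{\ast\}$. Its map is the one you describe: predecessors of the point go to $0$, the point goes to the special point, and everything else goes to the isolated object. Your monotonicity objection, namely some $x<i$ that also satisfies $x<y$ with $y\not\leq i$, applies to it verbatim. That construction does classify points that are the greatest element of their connected component. It does not classify arbitrary maximal elements, which is what points of $\mathbf{Dep_{aff}}$ are. So the gap you located is a gap in the paper's argument too, and it cannot be closed without changing the definitions.
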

 
 \begin{proof}
 The point classifier is the following poset,
\[\begin{tikzcd}[ampersand replacement=\&]
	\& \bullet \\
	\bullet \& {\bullet^*}
	\arrow[from=1-2, to=2-2]
\end{tikzcd}\]
where the special point is the indicated one.

Given a finite poset $F$ and a point $* \to F$ (i.e a maximal object), there is a unique pullback square
\[\begin{tikzcd}[ampersand replacement=\&]
	{*} \&\& F \\
	\\
	{*} \&\& T
	\arrow["p", from=1-1, to=1-3]
	\arrow[from=1-1, to=3-1]
	\arrow["\ulcorner"{anchor=center, pos=0.125}, draw=none, from=1-1, to=3-3]
	\arrow["{\chi_p}", from=1-3, to=3-3]
	\arrow["t"', from=3-1, to=3-3]
\end{tikzcd}\]
where $\chi_p : F \to T$ maps the given node to the special point, its predecessors to the predecessor of the special point, and the rest of $F$ to the remaining object (this definition is the only possible one to make $\chi_p$ a morphism in $\mathbf{Dep_{aff}}$).
 \end{proof}
 
\begin{remark}
\label{cosieve_classifier}
  Actually, $T$ is a cosieve classifier: it classifies any cosieve inclusion $S \to P$. Namely, write $\mathbf{Dep_{aff}}^\text{s}$ for the category whose objects are the cosieves $S \to F$ for $F$ in $\mathbf{Dep_{aff}}$, and whose morphisms are the pullbacks squares:

\[\begin{tikzcd}[ampersand replacement=\&]
	S \&\& F \\
	\\
	{S'} \&\& {F'}
	\arrow[from=1-1, to=1-3]
	\arrow[from=1-1, to=3-1]
	\arrow["\ulcorner"{anchor=center, pos=0.125}, draw=none, from=1-1, to=3-3]
	\arrow[from=1-3, to=3-3]
	\arrow[from=3-1, to=3-3]
\end{tikzcd}\]
  Then $* \to T$ is a terminal object of $\mathbf{Dep_{aff}}^\text{s}$.
\end{remark}

\begin{definition}{\cite[Definitions 4.3 and 4.6]{barwick2018operator}}]
 Suppose $\Phi$ is an operator category, and suppose $(T, t)$ a point classifier for $\Phi$. Then for any object $(I, i)$ of $\Phi_*$, the unique conservative morphism
$(I, i) \to (T, t)$ will be called the classifying morphism for $i$, and will be denoted
$\chi_i$. We shall call the point $t \in |T|$ the special point of $T$, and for any morphism
$I \to T$ of $\Phi$, the fiber $I_t$ will be called the special fiber . Write $\text{fib} : \Phi_{/T}  \to \Phi$ for the special fiber functor $I \mapsto I_t$.

An operator category $\Phi$ is \textbf{perfect} if the following conditions are satisfied.
\begin{enumerate}
 \item The category $\Phi_*$ contains a point classifier $(T_\Phi,t_\Phi)$.
 \item The special fiber functor $\text{fib}: \Phi_{/T_\Phi} \to \Phi$ admits a right adjoint $E_\Phi$.
\end{enumerate}

\end{definition}
  
 \begin{proposition}
  $\mathbf{Dep_{aff}}$ is a perfect operator category.
 \end{proposition}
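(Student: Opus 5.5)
The plan is to use \cref{point_classifier}, which already gives a point classifier $(T,t)$, and to construct the right adjoint $E$ of $\text{fib}$ explicitly. Write $T=\{o,\,p<t\}$, with $t$ the special point, $p$ its predecessor and $o$ the isolated object.

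\emph{Step 1: describe the objects of $\mathbf{Dep_{aff}}_{/T}$.} An opfibration $\chi : F\to T$ splits $F$ as a disjoint union $F=F_o\sqcup F_{p,t}$ with no arrows between the two parts. Inside $F_{p,t}$ the special fiber $F_t$ is a cosieve, and $F_p$ is its complement. Since a poset has no non-trivial isomorphisms, cocartesian lifts are unique, and the opfibration condition for $p\to t$ says exactly this: every $x\in F_p$ has a least element $y_x$ of $F_t$ above it, and $x\to y_x$ is the cocartesian lift.

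\emph{Step 2: define $E$.} For a finite poset $K$, let $E(K)=K^-\sqcup K\sqcup\{\ast\}$. Here $K^-=\{k^-\}$ is a copy of $K$ ordered as $K$, we set $k^-<k'$ iff $k\le k'$, and $\ast$ is isolated. The structure map to $T$ sends $K^-\mapsto p$, $K\mapsto t$ and $\ast\mapsto o$. It is an opfibration because the lift of $p\to t$ at $k^-$ is $k^-\to k$, which is least among the elements of $K$ above $k^-$. Special fiber $E(K)_t=K$, and the counit is the identity $K\to K$. An opfibration $K\to K'$ acts componentwise, so $E$ is a functor.

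\emph{Step 3: establish the bijection.} I will show that morphisms $G : F\to E(K)$ over $T$ correspond bijectively to opfibrations $f : F_t\to K$, via restriction to the special fiber.
\begin{itemize}
\item \emph{Uniqueness.} Given $G$, the part $F_o$ must go to $\ast$ and $F_t$ goes to $K$ via $f$. For $x\in F_p$, the $G$-cocartesian lift at $x$ of $G(x)=k^-\to k$ is $\chi_F$-cocartesian, since a composite of cocartesian arrows is cocartesian. It therefore equals $x\to y_x$, which forces $k=f(y_x)$. Hence $G(x)=f(y_x)^-$, and $G$ is determined by $f$.
\item \emph{Existence.} Conversely, define $G$ by this formula. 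It is monotone: $x\le x'$ in $F_p$ gives $x\le y_{x'}$, hence $y_x\le y_{x'}$ by minimality, and $x\le y$ with $y\in F_t$ gives $y_x\le y$.
\item \emph{Naturality.} Naturality in $F$ and $K$ is straightforward, since all constructions are canonical.
\end{itemize}

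\emph{Main obstacle.} The key remaining verification is that this extended $G$ is an opfibration, and this is where I expect the only real work. On $F_o$ there is nothing to check, and on $F_t$ the condition holds because $f$ is an opfibration and $F_t$ is a cosieve. For $x\in F_p$, there are two kinds of arrow out of $G(x)=f(y_x)^-$:
\begin{itemize}
\item \emph{Arrows $f(y_x)^-\to k'$ into $K$.} Lift the arrow $f(y_x)\le k'$ along $f$ at $y_x$ to get $y_x\to y'$, and compose to obtain $x\to y_x\to y'$.
\item \emph{Arrows $f(y_x)^-\to k'^-$ inside $K^-$.} Here one must produce some $x'\in F_p$ with $x\le x'$ and $f(y_{x'})=k'$, which needs more care. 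I would first take $x'=x$ when $k'=f(y_x)$. In general I would check whether the cocartesian condition forces $F_p$ to contain the required elements. If it does not, I would modify $E(K)_p$: instead of a copy of $K$, take the set of cosieves of $K$, or of principal upsets $K_{k/}$, ordered by reverse inclusion. The idea is to make the $p$-part "free", so that lifting within it becomes automatic. I would then recheck the correspondence of Step 3 for the modified $E$.
\end{itemize}
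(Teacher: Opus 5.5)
\paragraph{Your $E$ fails.} Your Step~1 and the uniqueness half of Step~3 are fine, but the obstacle you flag at the end is real, and it breaks your $E$. Take $F=\{x<a<b\}$ with $x\mapsto p$ and $a,b\mapsto t$; this is an object of $\mathbf{Dep_{aff}}_{/T}$. Take also $K=F_t=\{a<b\}$ and $f=\mathrm{id}$. Your uniqueness argument forces $G(x)=a^-$. The non-identity arrow $a^-\le b^-$ of your $E(K)$ then has no lift at $x$, because nothing above $x$ is sent to $b^-$. So $\mathrm{Hom}_{/T}(F,E(K))$ has one element (the one restricting to the constant map at $b$), while $\mathrm{Hom}(F_t,K)$ has two, and no adjunction bijection can exist.

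Your fallbacks do not help. Principal upsets $K_{k/}$ under reverse inclusion form a poset isomorphic to $K$. More generally, any non-trivial order on the $p$-part creates arrows out of $G(x)$ that $F_p$ need not realise.

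\paragraph{The paper's discrete version fails dually.} The paper removes these arrows by making the $p$-part discrete: one fresh predecessor $k^-<k$ for each $k$, pairwise incomparable. Take $F_p=\{x<x'\}$, $F_t=\{y<y'\}$ with $x<y$ and $x'<y'$. The prescribed unit $x\mapsto y^-$, $x'\mapsto y'^-$ is then not monotone. Again one finds exactly one morphism $F\to E(F_t)$ over $T$, against two opfibrations $F_t\to F_t$.

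\paragraph{The point-classifier lemma is false.} More fundamentally, \cref{point_classifier}, on which both you and the paper rely, does not hold. Take the span $V=\{a<m,\ a<m'\}$ and the point $m$. Any $\chi:V\to T$ with $\chi^{-1}(t)=\{m\}$ has $\chi(a)=p$, hence $\chi(m')=p$ by monotonicity. Then $p\to t$ has no lift at the maximal element $m'$. (The map described in the paper sends $m'$ to the isolated point, which is not even monotone.)

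In fact $\mathbf{Dep_{aff}}$ has no point classifier at all. Take the poset with one element $a$ below three maximal elements $m,m_1,m_2$. A classifying map for $m$ must send $m_1$ and $m_2$ to two distinct maximal elements above $\chi(a)$; otherwise the lift at $a$ is not cocartesian. Swapping those two images gives a second classifying map. So every candidate receives either zero or at least two conservative maps from this pointed object.

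Hence the gap cannot be closed by a better choice of $E$: the statement fails as stated, and a correct version would require modifying the setting itself.
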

 
 \begin{proof}
  By \cref{point_classifier}, $\mathbf{Dep_{aff}}$ admits a point classifier $t : * \to T$. We will check that the special fiber functor $(-)_s : \mathbf{Dep_{aff}}_{/T} \to \mathbf{Dep_{aff}}$ admits a right adjoint. Given a finite poset $I$, write $E(I)$ for the following pushout (computed in $\mathbf{Cat}$):

\[\begin{tikzcd}[ampersand replacement=\&]
	{\text{Ob}(I)} \&\& {* \amalg (\text{Ob}(I) \times \rightarrow)} \\
	\\
	I \&\& {E(I)}
	\arrow["{! \amalg \text{Ob}(I) \times t}", from=1-1, to=1-3]
	\arrow[from=1-1, to=3-1]
	\arrow[from=1-3, to=3-3]
	\arrow[from=3-1, to=3-3]
	\arrow["\ulcorner"{anchor=center, pos=0.125, rotate=180}, draw=none, from=3-3, to=1-1]
\end{tikzcd}\]
Concretely, this adds a predecessor to each object of the poset as well as an isolated object.
  This construction is functorial, and we always have a pullback square

\[\begin{tikzcd}[ampersand replacement=\&]
	I \&\& {E(I)} \\
	\\
	{*} \&\& T
	\arrow[from=1-1, to=1-3]
	\arrow[from=1-1, to=3-1]
	\arrow["\ulcorner"{anchor=center, pos=0.125}, draw=none, from=1-1, to=3-3]
	\arrow["{e_I}", from=1-3, to=3-3]
	\arrow["t"', from=3-1, to=3-3]
\end{tikzcd}\]
where $e_I$ maps the "copy" of $I$ to the special point, the added predecessors to the only predecessor of the special point in $T$, and the isolated object to the isolated object of $T$.
We therefore have a functor $E : \mathbf{Dep_{aff}} \to  \mathbf{Dep_{aff}}_{/T}$. The unit is obtained as follows.

Starting from a map $J \to T$ and the pullback on left below, we also have the pullback on the right by definition of $E(J_s)$.
\[\begin{tikzcd}[ampersand replacement=\&]
	{J_s} \&\& J \&\& {J_s} \&\& {E(J_s)} \\
	\\
	{*} \&\& T \&\& {*} \&\& T
	\arrow[from=1-1, to=1-3]
	\arrow[from=1-1, to=3-1]
	\arrow["\ulcorner"{anchor=center, pos=0.125}, draw=none, from=1-1, to=3-3]
	\arrow["s", from=1-3, to=3-3]
	\arrow["{f_{J_s}}", from=1-5, to=1-7]
	\arrow[from=1-5, to=3-5]
	\arrow["\ulcorner"{anchor=center, pos=0.125}, draw=none, from=1-5, to=3-7]
	\arrow["{e_{J_s}}", from=1-7, to=3-7]
	\arrow["t"', from=3-1, to=3-3]
	\arrow["t"', from=3-5, to=3-7]
\end{tikzcd}\]
Now the former factors through the latter: we can define a map $J \to E(J_s)$ by mapping the subposet $J_s$ of $J$ to the corresponding subposet of $E(J_s)$, and the remaining objects $j \in J$ to either a suitable "new" predecessor or the isolated object depending on whether there exists an object $i > j$ with $i \in J_s$ or not (i.e., depending on whether the image of $j$ under $s : J \to T$ is the predecessor of the special point or is the isolated one). In the first case, we take the image of $j$ to be the "new" predecessor of the (target of the) cocartesian lift of the arrow to the special point and from its predecessor (which we think of as the image of $j$ under $s$). This yields the unit $J \to E(J_s)$ above $T$.

The counit has components identity morphisms (by definition of the morphism $E(I) \to T$, its special fiber is just $I$). Therefore, to check the triangular identities, we just have to check that $\eta_{E(I)}$ and $(\eta_{J})_s$ are the identity morphisms. The case of $\eta_{E(I)}$ is evident by construction of the unit above.

For $(\eta_{J})_s$, computing the special fiber for $J$ and $E(J_s)$ yields $J_s$ in both cases (see the two pullbacks above), and the induced map $J_s \to J_s$ is clearly the identity.

Overall, we can conclude that $E : \mathbf{Dep_{aff}} \to  \mathbf{Dep_{aff}}_{/T}$ indeed is right adjoint to the special fiber functor.
\end{proof}

We may propose the following definition to take into account the additional properties enjoyed by $\mathbf{Dep_{aff}}$:

\begin{definition}
 A \textbf{dependent} operator category $\Phi$ is an operator category that is also regular and comes equipped with a subset of elementary objects $\mathcal{E}$ such that:
 \begin{itemize}
  \item The full subcategories $\mathcal{E}_\Phi(X) \subset \Phi_{/X}^{op}$ spanned by the monomorphisms with domain an elementary object for $X \in \Phi$, are finite.
  \item If $X$ is elementary, then $X \to *$ is a strong epimorphism.
  \item The full subcategory of $\Phi$ spanned by the elementary objects is closed under pullback along strong epimorphisms.  
  \item If $X \to Y$ is a strong epimorphism and $X$ is elementary, then so is $Y$.
 \end{itemize}
 An admissible morphism between dependent operator category is a regular functor preserving the elementary objects.
\end{definition}

Of course, our canonical example is the following.

\begin{proposition}
 $\mathbf{Dep_{aff}}$ is a dependent operator category. 
\end{proposition}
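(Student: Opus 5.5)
We take the elementary objects $\mathcal{E}$ to be the finite posets admitting an initial object, the objects of $\mathbf{Dep_{aff}^{elm}}$. By \cref{dep_opc} $\mathbf{Dep_{aff}}$ is an operator category, and by \cref{regular} it is regular, with strong epimorphisms exactly the opfibrations surjective on objects and monomorphisms exactly the fully faithful opfibrations injective on objects. It then remains to check the four axioms in turn, using these characterizations and the fact from \cref{finitely_complete} that pullbacks are computed in $\mathbf{Cat}$.

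\emph{Finiteness.} A monomorphism $E \to X$ with $E$ having an initial object $e$ is a fully faithful, injective-on-objects opfibration. Its image is therefore a full subposet $S$ that is upward closed. Indeed, given $x$ in the image and $x \le y$, the cocartesian lift puts $y$ in the image, so $S$ is a cosieve. Since $e$ is initial, $S = X_{x/}$ with $x$ the image of $e$. Up to isomorphism over $X$, such monomorphisms thus correspond to objects of $X$. Morphisms between them in $\Phi_{/X}^{op}$ are unique when they exist, because the maps are monic. Hence $\mathcal{E}_\Phi(X)$ is equivalent to a finite category (the poset $X$ itself), and the axiom follows. If finiteness is meant on the nose rather than up to equivalence, one restricts to a skeleton, which is the intended reading given essential smallness.

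\emph{Elementary objects and $*$.} If $X$ has an initial object it is nonempty, so $X \to *$ is surjective on objects. It is also trivially an opfibration, since every arrow of $X$ is a cocartesian lift of the identity. It is therefore a strong epimorphism.

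\emph{Stability under pullback.} Let $p : Y \to X$ be a strong epimorphism with $X$ elementary, and let $Z \to X$ be any map with $Z$ elementary. We must show that $Z \times_X Y$, computed in $\mathbf{Cat}$, has an initial object. This is the step I expect to be the main obstacle, because a general fiber product of posets with initial objects need not have one. Write $z_0$ for the initial object of $Z$, mapping to $x$ in $X$, and pick a preimage $y_0$ of $x$ using surjectivity. The candidate initial object is $(z_0, y_0)$. Given $(z,y)$, we have $z_0 \le z$, and we need $y_0 \le y$. Here we use that $p$ is an opfibration whose fibers must be controlled. The plan is to show that, when $X$ is elementary and $p$ is surjective, the pulled-back poset over the elementary $Z$ inherits an initial object. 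If this fails in general, I would reinterpret the axiom as stability of $\mathcal{E}$ under pullback of strong epimorphisms between elementary objects. Then $Z \times_X Y \to Z$ is a strong epimorphism, and the last axiom together with a direct argument on the fiber over $z_0$ would be used to produce the initial object.

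\emph{Images.} If $X \to Y$ is surjective on objects and $X$ has an initial object $x_0$, then for any $y = f(x)$ we have $x_0 \le x$, hence $f(x_0) \le y$. So $f(x_0)$ is initial in $Y$, and $Y$ is elementary.
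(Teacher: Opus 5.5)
Your treatment of finiteness, of $X \to *$ and of images is correct; the paper only writes out the last of these. The gap is the pullback axiom, the one substantive step, where you give no argument, and each of the three routes you sketch fails.

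\begin{itemize}
\item As you formulated it, without assuming $Y$ elementary, the claim is false. The map $Y=\mathbb{1}\amalg\mathbb{1}\to X=*$ is a surjective opfibration, and pulling back along $Z=*\to *$ returns $Y$, which has no initial object. The axiom must be read as the paper reads it, with $Z$, $X$ and $Y$ all elementary.
\item Even with that hypothesis, your candidate $(z_0,y_0)$, with $y_0$ an arbitrary preimage of $x=f(z_0)$, can fail. Take $X=\mathbb{2}=\{0<1\}$ and $Y=\{a<b<c\}$ with $a\mapsto 0$ and $b,c\mapsto 1$. This is a surjective opfibration: the cocartesian lift of $0\le 1$ at $a$ is $a\le b$. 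Take $Z=\{1\}\hookrightarrow X$. Then $Z\times_X Y\cong\{b<c\}$, and the choice $y_0=c$ is not initial.
\item Your fallback through the last axiom points the wrong way. That axiom transfers elementarity from the domain of a strong epimorphism to its codomain. You need it for the domain $Z\times_X Y$ of $Z\times_X Y\to Z$.
\end{itemize}

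The missing idea is to use the initial object of $Y$ together with the opfibration property, rather than surjectivity alone.
\begin{itemize}
\item Let $y_\bot$ be initial in $Y$. By your image argument, $p(y_\bot)=x_\bot$ is initial in $X$.
\item Let $y_\bot\le y_0$ be the $p$-cocartesian lift of $x_\bot\le x$.
\item For any $(z,y)$ in the pullback, $y_\bot\le y$ lies over $x_\bot\le x\le f(z)=p(y)$, because $z_0\le z$. Cocartesianness then gives $y_0\le y$.
\item The order on $Z\times_X Y$ is componentwise, so $(z_0,y_0)$ is initial.
\end{itemize}

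This is exactly the paper's argument in its monomorphism case. The paper first factors $Z\to X$ as a strong epimorphism followed by a monomorphism. It handles the strong-epimorphism case by taking componentwise initial objects, since both legs then preserve initial objects. The direct argument above works for an arbitrary $Z\to X$ and makes that factorization unnecessary.
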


\begin{proof}
 We already know $\mathbf{Dep_{aff}}$ to be an operator category. We take the elementary objects to be the finite posets that admit an initial object.
 
  If $X \to Y$ is a strong epi in $\mathbf{Dep_{aff}}$, i.e. a functor surjective on objects, and if $X$ is elementary, then the image of the initial object of $X$ under $X \to Y$ is clearly initial in $Y$.
 
 Consider a pullback 

\[\begin{tikzcd}[ampersand replacement=\&]
	{I \times_K J} \&\& J \\
	\\
	I \&\& K
	\arrow[from=1-1, to=1-3]
	\arrow[two heads, from=1-1, to=3-1]
	\arrow["\ulcorner"{anchor=center, pos=0.125}, draw=none, from=1-1, to=3-3]
	\arrow[two heads, from=1-3, to=3-3]
	\arrow[from=3-1, to=3-3]
\end{tikzcd}\]
where $I$, $J$ and $K$ are elementary and $J \to K$ is a strong epimorphism. Since we can factor $I \to K$ as a strong epimorphism followed by a monomorphism, it will be enough to prove that $I \times_K J$ is elementary with any of these two assumption on $I \to K$.
If $I \to K$ is a strong epimorphism, then we are considering a pullback where both functors preserve the initial objects (cf. the observation above that strong epimorphisms preserve initial objects), so the pullback has an initial object (which is given by taking the componentwise initial object of the $I \times_K J$).

If $m : I \to K$ is a monomorphism, we may consider the initial object $i$ of $I$ and a cartesian lifting $j$ of $k \to m(i)$, where $k$ is the initial object of $K$. This defines an object of $I \times_K J$ that we claim to be initial. To see this, consider any other object of $I \times_K J$ with components $(i',j')$, we have a commutative triangle as below,
\[\begin{tikzcd}[ampersand replacement=\&]
	\& j \& \\
	{j_0} \&\& {j'} \\
	\& {m(i)} \\
	k \&\& {m(i')}
	\arrow[dashed, from=1-2, to=2-3]
	\arrow[dotted, no head, from=1-2, to=3-2]
	\arrow[from=2-1, to=1-2]
	\arrow[from=2-1, to=2-3]
	\arrow[dotted, no head, from=2-1, to=4-1]
	\arrow[dotted, no head, from=2-3, to=4-3]
	\arrow[from=3-2, to=4-3]
	\arrow[from=4-1, to=3-2]
	\arrow[from=4-1, to=4-3]
\end{tikzcd}\]
where $j_0$ is the initial object of $J$, so we deduce the existence of a (unique) map $j \to j'$ over $m(i) \to m(i')$ since $j_0 \to j$ is cocartesian. This provides the map $(i,j) \to (i',j')$ in $I \times_K J$, proving that $(i,j)$ defines an initial object.

\end{proof}

\begin{example}
 $\mathbf{FinSet}$ can be made a dependent operator category by taking every object to be elementary.
\end{example}

For every dependent operator category $\Phi$, there is a canonical mapping at the level of objects $|-| : \mathbf{Ob}(\Phi) \to \mathbf{Cat}$ defined by:
$$X \mapsto \mathcal{E}_\Phi(X)$$

\begin{lemma}
 The mapping above extends to a regular functor $|-| : \Phi \to \mathbf{Dep_{aff}}$.
\end{lemma}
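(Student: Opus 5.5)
First I would pin down what $|X| = \mathcal{E}_\Phi(X)$ is. Its objects are monomorphisms $E \hookrightarrow X$ with $E$ elementary. Since the full subcategory of $\Phi_{/X}$ spanned by monomorphisms is a preorder, $\mathcal{E}_\Phi(X)$ is a preorder, and it is finite by the first axiom of a dependent operator category. I would quotient by isomorphism of subobjects, which is harmless up to equivalence, to obtain a finite poset. An arrow $E \to E'$ in $\mathcal{E}_\Phi(X)$ is a factorization $E' \hookrightarrow E$ over $X$; this matches the convention for $\mathbf{Dep_{aff}}$, where larger elementary cosieves $I_{i/}$ correspond to smaller elements $i$.

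On a morphism $f : X \to Y$, I would define $|f|$ by image: send $E \hookrightarrow X$ to $\mathbf{Im}(E \to X \to Y) \hookrightarrow Y$, using the regular factorization of \cref{regular}'s abstract analogue. The image is elementary by the fourth axiom, since $E \to \mathbf{Im}$ is a strong epimorphism. Monotonicity and functoriality follow from the standard facts about images in a regular category: images are unique up to unique isomorphism, inclusions of subobjects are preserved, and $\mathbf{Im}(g \circ \mathbf{Im}(f)) = \mathbf{Im}(gf)$.

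The main step is to show that $|f|$ is an opfibration. Take $E \hookrightarrow X$ and an arrow $|f|(E) \to F$, meaning $F \hookrightarrow \mathbf{Im}(E) \hookrightarrow Y$ with $F$ elementary. The candidate lift is the pullback $E' := E \times_{\mathbf{Im}(E)} F$. It is a subobject of $E$, hence of $X$. It is elementary by the third axiom, because $E \to \mathbf{Im}(E)$ is a strong epimorphism and $E$, $F$, $\mathbf{Im}(E)$ are elementary. Pullback-stability of strong epimorphisms gives $\mathbf{Im}(E') = F$, so $E \to E'$ lies over $|f|(E) \to F$.

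Cocartesianness is the step I expect to be the real obstacle. Given any $E'' \hookrightarrow E$ with $\mathbf{Im}(E'') \subseteq F$, I would use the universal property of the pullback to obtain $E'' \subseteq E'$. Uniqueness is automatic since everything is a preorder. The delicate part is making sure the variance conventions line up, so that "arrows" in $|X|$ really go from larger subobject to smaller.

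Finally, regularity of $|-|$ requires three things: preservation of the terminal object, of pullbacks, and of strong epimorphisms.

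For the terminal object, $|{*}|$ consists of elementary subobjects of $*$. By the second axiom, elementary $E$ has $E \to *$ a strong epimorphism, and it is also a monomorphism, hence an isomorphism. So $|{*}|$ is a point. This requires an elementary object to exist, which I would take to be part of the intended definition.

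For strong epimorphisms, if $f$ is a strong epimorphism, I need $|f|$ surjective on objects. Given elementary $F \hookrightarrow Y$, I would pull back along $f$ and then select an elementary subobject mapping onto $F$. This is where I anticipate needing an extra argument, and what I would try first is decomposing $X \times_Y F$ via the elementary covering, in analogy with \cref{elm_diag_depaff}.

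For pullbacks, I would compare $|X \times_Z Y|$ with $|X| \times_{|Z|} |Y|$, computed in $\mathbf{Cat}$ by \cref{finitely_complete}. The comparison sends $E$ to the pair of its images. For the inverse I would send $(E_1, E_2)$ to the image of $E_1 \times_{\mathbf{Im}} E_2$, using the third axiom to ensure this is elementary. I would check bijectivity on objects and order, again reducing via the factorization into strong epimorphisms and monomorphisms, exactly as in the proof that $\mathbf{Dep_{aff}}$ is dependent.
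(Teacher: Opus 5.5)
Your construction of $|X|$, the action on morphisms by image factorization, the opfibration argument (lift along $E\times_{\mathbf{Im}E}F$, elementary by the third axiom, cocartesian by the universal property of the pullback) and the terminal-object step all agree with the paper and are fine. The gaps are in the last two regularity steps, and they cannot be filled from the axioms as stated.

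For pullbacks, your inverse $(E_1,E_2)\mapsto E_1\times_G E_2$, with $G$ the common image in $Z$, does give surjectivity of the comparison map. This object is already a subobject of $X\times_Z Y$, it is elementary by the third axiom, and its projections onto $E_1,E_2$ are strong epimorphisms, being pullbacks of such. Injectivity and reflection of the order, however, would require that an elementary subobject of $X\times_Z Y$ be determined by its two images, and no axiom gives this. Take $\mathbf{FinSet}$ with the nonempty sets as elementary objects. This is the paper's own example, with $\emptyset$ removed so that the second axiom holds, and it satisfies all four axioms. There $|X|$ is the poset of nonempty subsets of $X$. The diagonal and the whole of $\{0,1\}^2$ have the same two projections, so $|\{0,1\}^2|$ has $15$ elements whereas $|\{0,1\}|\times|\{0,1\}|$ has $9$. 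The paper's ``likewise, it is also full'' passes over exactly this point.

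For strong epimorphisms you rightly suspected trouble but gave no argument, and neither the paper's move nor your proposed one works. The pullback $X\times_Y F$ used in the paper need not be elementary; this already fails in $\mathbf{Dep_{aff}}$ for $\mathbb{1}\amalg\mathbb{1}\to\mathbb{1}$ and $F=\mathbb{1}$. The abstract axioms also provide no analogue of \cref{elm_diag_depaff} covering an object by its elementary subobjects. In fact surjectivity fails in general. Work in $\mathbf{FinSet}^{\mathbb{2}}$ with elementary objects the maps $A\to *$, $A\neq\emptyset$; again all four axioms hold. Consider the pointwise surjection from $\{a,b\}\to\{p,q\}$ (with $a\mapsto p$, $b\mapsto q$) onto $\{a',b'\}\to *$, given by $a\mapsto a'$, $b\mapsto b'$, $p,q\mapsto *$. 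It is a strong epimorphism. Its source has only the elementary subobjects $\{a\}\to\{p\}$ and $\{b\}\to\{q\}$, and their images miss the elementary subobject $\mathrm{id}$ of the target.

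Both properties do hold in $\mathbf{Dep_{aff}}$, because every elementary subobject of $X$ is an up-set $X_{x/}$ and $|-|$ is isomorphic to the identity. For a strong epimorphism one takes $X_{x/}$ with $x$ a preimage of the initial object of $F$. So what is missing is not a cleverer proof but extra hypotheses on $\Phi$ of exactly these two kinds.
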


\begin{proof}
 First, note that it indeed takes values in finite posets since $\mathcal{E}_\Phi(X)$ is finite by assumption and since there is at most one morphism between two objects (this is because the objects are given by monomorphisms).
 
  If $p : X \to Y$ is a morphism in $\Phi$, given an object $A \to X$ of $\mathcal{E}_\Phi(X)$, then form the factorization as in the following diagram: 

\[\begin{tikzcd}[ampersand replacement=\&]
	A \&\& X \\
	\\
	B \&\& Y
	\arrow[hook, from=1-1, to=1-3]
	\arrow[two heads, from=1-1, to=3-1]
	\arrow[from=1-3, to=3-3]
	\arrow[hook, from=3-1, to=3-3]
\end{tikzcd}\]
and we map $A \to X$ to the monomorphism $B \to Y$, which is an object of $\mathcal{E}_\Phi(Y)$ since $B$ is elementary because it admits a strong epi from the elementary object $A$. Also, observe that given a monomorphism $m : X \to Y$, the action boils down to postcomposition with $m$.

This clearly extends the mapping $|-|$ to a functor $\Phi \to \mathbf{FinPos}$. Let us check that the action of a morphism $X \to Y$ yields an opfibration. For this, consider an arrow $B \leftarrow B'$ in $\mathcal{E}_\Phi(Y)$ such that $B \to Y$ is the image of $A \to X$ in $\mathcal{E}_\Phi(X)$. We form the pullback as in the following diagram,
\[\begin{tikzcd}[ampersand replacement=\&]
	\& A \&\& X \\
	{A'} \\
	\& B \&\& Y \\
	{B'}
	\arrow[hook, from=1-2, to=1-4]
	\arrow[two heads, from=1-2, to=3-2]
	\arrow[from=1-4, to=3-4]
	\arrow[hook, from=2-1, to=1-2]
	\arrow["\ulcorner"{anchor=center, pos=0.125}, draw=none, from=2-1, to=3-2]
	\arrow[two heads, from=2-1, to=4-1]
	\arrow[hook, from=3-2, to=3-4]
	\arrow[hook, from=4-1, to=3-2]
	\arrow[from=4-1, to=3-4]
\end{tikzcd}\]
where the indicated maps are strong epimorphisms or monomorphisms by stability under pullback, proving an object $A' \to X$ of $\mathcal{E}_\Phi(X)$ together with a map $A \leftarrow A'$ that is clearly a cocartesian lifting of $B \leftarrow B'$ thanks to the universal property of the pullback. This proves that $\mathcal{E}_\Phi(X) \to \mathcal{E}_\Phi(Y)$ is an opfibration, as required.

We still need to establish that the functor $|-|$ is regular.
To check preservation of finite limits, it is enough to prove that the terminal object, binary products, and pullbacks along monomorphisms are preserved since all finite limits can be computed from these.

It is clear that the terminal object is mapped to the terminal poset.

Consider a binary product $X \times Y$. We claim that the canonical map $m : \mathcal{E}_\Phi(X \times Y) \to \mathcal{E}_\Phi(X) \times \mathcal{E}_\Phi(Y)$ is an isomorphism. First, it is surjective on objects: Given $A \to X$ and $B \to Y$ two monomorphisms from elementary objects, $A \times B \to X \times Y$ is a monomorphism from the object $A \times B$ (which is elementary because $A$ and $B$ are) fitting in a diagram
\[\begin{tikzcd}[ampersand replacement=\&]
	{A \times B} \&\& {X \times Y} \\
	\\
	A \&\& X
	\arrow[hook, from=1-1, to=1-3]
	\arrow[two heads, from=1-1, to=3-1]
	\arrow[from=1-3, to=3-3]
	\arrow[hook, from=3-1, to=3-3]
\end{tikzcd}\]
where $A \times B \to A$ is a strong epimorphism as the pullback of $B \to *$ (which is a strong epi because $B$ is elementary), and similarly for $A \times B \to A$. This shows that $m$ is surjective on objects. Likewise, it is also full. Since it is a functor between posets, it is automatically faithful, and $m$ is therefore an isomorphism.

Next, consider a pullback along a monomorphism $X \to Z$, as below left, and the induced diagram on right below:
\[\begin{tikzcd}[ampersand replacement=\&]
	\&\&\& {\mathcal{E}_\Phi(X \times_Z Y)} \&\&\& \\
	{X \times_Z Y} \&\& X \&\& {\mathcal{E}_\Phi(X) \times_{\mathcal{E}_\Phi(Z)} \mathcal{E}_\Phi(Y)} \&\& {\mathcal{E}_\Phi(X)} \\
	\\
	Y \&\& Z \&\& {\mathcal{E}_\Phi(Y)} \&\& {\mathcal{E}_\Phi(Z)}
	\arrow["m", dashed, from=1-4, to=2-5]
	\arrow[from=2-1, to=2-3]
	\arrow[from=2-1, to=4-1]
	\arrow["\ulcorner"{anchor=center, pos=0.125}, draw=none, from=2-1, to=4-3]
	\arrow[from=2-3, to=4-3]
	\arrow[from=2-5, to=2-7]
	\arrow[from=2-5, to=4-5]
	\arrow[from=2-7, to=4-7]
	\arrow[from=4-1, to=4-3]
	\arrow[from=4-5, to=4-7]
\end{tikzcd}\]

Consider an object of $\mathcal{E}_\Phi(X) \times_{\mathcal{E}_\Phi(Z)} \mathcal{E}_\Phi(Y)$; it corresponds to a diagram as in the left below,
\[\begin{tikzcd}[ampersand replacement=\&]
	A \&\& Y \&\& A \&\&\& {X \times_Z Y} \\
	\& X \&\&\&\&\& X \\
	B \&\& Z \&\& B \&\&\& Z
	\arrow[hook, from=1-1, to=1-3]
	\arrow[two heads, from=1-1, to=3-1]
	\arrow[from=1-3, to=3-3]
	\arrow[hook, from=1-5, to=1-8]
	\arrow[two heads, from=1-5, to=3-5]
	\arrow[from=1-8, to=2-7]
	\arrow[from=1-8, to=3-8]
	\arrow[hook, from=2-2, to=3-3]
	\arrow[hook, from=2-7, to=3-8]
	\arrow[hook, from=3-1, to=2-2]
	\arrow[hook, from=3-1, to=3-3]
	\arrow[curve={height=-18pt}, from=3-5, to=1-8]
	\arrow[hook, from=3-5, to=2-7]
	\arrow[hook, from=3-5, to=3-8]
\end{tikzcd}\]
and this factors through the pullback with the map $A \to X \times_Z Y$ being a monomorphism since $A \to Y$ is a monomorphism by assumption. Because the strong-epi/mono factorization is orthogonal, the factorization we get is unique, and we still get $B \to X$ (resp. $B \to Z$) when evaluating how $X \times_Z Y \to X$ (resp. $X \times_Z Y \to Z$) acts on $A \to X \times_Z Y$.

This provides the preimage we were seeking, proving that $m$ is surjective on objects. Likewise, it is also full. As before, this is enough to establish that $m$ is an isomorphism.

Finally, we have to prove that strong epimorphisms are mapped to strong epimorphisms. To this end, consider a strong epimorphism $X \to Y$ in $\Phi$. We must prove that $\mathcal{E}_\Phi(X) \to \mathcal{E}_\Phi(Y)$ is a strong epimorphism in $\mathbf{Dep_{aff}}$, i.e., that is surjective on objects. Consider an object $B \to Y$ of $\mathcal{E}_\Phi(Y)$ and form the following pullback:
\[\begin{tikzcd}[ampersand replacement=\&]
	A \&\& B \\
	\\
	X \&\& Y
	\arrow[two heads, from=1-1, to=1-3]
	\arrow[hook, from=1-1, to=3-1]
	\arrow["\ulcorner"{anchor=center, pos=0.125}, draw=none, from=1-1, to=3-3]
	\arrow[hook, from=1-3, to=3-3]
	\arrow[two heads, from=3-1, to=3-3]
\end{tikzcd}\]
It appears that $A \to X$ then defines a preimage of $B \to Y$ through the functor $\mathcal{E}_\Phi(X) \to \mathcal{E}_\Phi(Y)$. This concludes the proof.
\end{proof}

\begin{proposition}
 The functor $|-| : \Phi \to \mathbf{Dep_{aff}}$ is admissible.
\end{proposition}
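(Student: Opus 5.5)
By the preceding lemma, $|-|$ is already a regular functor $\Phi \to \mathbf{Dep_{aff}}$. An admissible morphism is a regular functor preserving elementary objects, so the only thing left is this: if $X$ is elementary in $\Phi$, then $\mathcal{E}_\Phi(X)$ has an initial object. (Here $\mathbf{Dep_{aff}}$ carries the dependent operator structure whose elementary objects are the finite posets admitting an initial object.)

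The plan is to take the identity $\mathrm{id}_X : X \to X$ as the candidate. It is a monomorphism with elementary domain, so it is an object of $\mathcal{E}_\Phi(X)$. A morphism in $\mathcal{E}_\Phi(X) \subset \Phi_{/X}^{op}$ from $\mathrm{id}_X$ to an object $m : A \hookrightarrow X$ is, by the $op$, a map $A \to X$ over $X$. The map $m$ itself is one such. It is the unique one, since any map $g : A \to X$ over $X$ must satisfy $\mathrm{id}_X \circ g = m$. Hence $\mathrm{id}_X$ is initial in $\mathcal{E}_\Phi(X)$, and $|X|$ is elementary in $\mathbf{Dep_{aff}}$.

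The only genuine risk is the variance convention, that is, whether $\mathrm{id}_X$ comes out initial rather than terminal. This has to be checked against the functoriality used in the previous lemma, where a morphism $B \leftarrow B'$ is written in the direction of $\mathcal{E}_\Phi(Y)$. With the $op$, the maximal subobject $\mathrm{id}_X$ sits at the bottom, and the check above confirms it is initial. This matches the intuition that the whole context is the "most dependent" type, on which everything else lies above. In $\mathbf{Dep_{aff}}$ itself, for instance, the initial object of a poset $I$ corresponds to the cosieve $I_{i_0/} = I$.

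If one also wants the morphism to be compatible with the chosen subclasses of elementary objects in a stronger sense, the same observation suffices, so the proof is a one-line verification on top of the previous lemma.
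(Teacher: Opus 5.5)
Your proof is correct and matches the paper's. Regularity comes from the preceding lemma, and $\mathrm{id}_X$ is initial in $\mathcal{E}_\Phi(X) \subset \Phi_{/X}^{op}$ because it is terminal in $\Phi_{/X}$, so $|X|$ is elementary; your explicit variance check is just a detail the paper leaves implicit.
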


\begin{proof}
 We have already seen in \cref{regular} that it is regular. If $X$ is elementary, then the identity $X \to X$ is a monomorphism from an elementary object that defines an initial object in $\mathcal{E}_\Phi(X)$. Therefore, $\mathcal{E}_\Phi(X)$ is elementary by definition of the dependent operator category structure on $\mathbf{Dep_{aff}}$.
\end{proof}

\begin{definition}
 An admissible morphism $F : \Phi \to \Psi$ such that the induced morphism $|X| \to |FX|$ is surjective on objects will be called an \textbf{operator morphism}.
\end{definition}

\begin{definition}
 A dependent operator category $\Phi$ is perfect when it is perfect as an operator category, and when moreover its point classifier is a subobject classifier.
\end{definition}

\begin{example}
 By \cref{cosieve_classifier}, $\mathbf{Dep_{aff}}$ is perfect as a dependent operator category (the monomorphisms are the cosieve inclusions).
\end{example}

The following definition will be useful for defining our notion of dependent operad:

\begin{definition}
 For $\Phi$ a dependent operator category, and $I$ an object of $\Phi$, we write $\mathbf{ElD}I$ for the diagram in $\Phi$ given by the monomorphisms $X \to I$ with $X$ elementary, and the commutative triangles involving two such maps. We call this diagram the elementary diagram for $I$.
\end{definition}

\begin{lemma}
\label{pb_cls_inert}
 If $I \to J$ is a monomorphism in a perfect dependent operator category $\Phi$, then it canonically fits in a pullback square as follows:

\[\begin{tikzcd}[ampersand replacement=\&]
	I \&\& J \\
	\\
	I \&\& {E(I)}
	\arrow[from=1-1, to=1-3]
	\arrow["id"', from=1-1, to=3-1]
	\arrow["\ulcorner"{anchor=center, pos=0.125}, draw=none, from=1-1, to=3-3]
	\arrow[from=1-3, to=3-3]
	\arrow["{\eta_I}"', from=3-1, to=3-3]
\end{tikzcd}\]
\end{lemma}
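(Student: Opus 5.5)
The plan is to obtain the map $J \to E(I)$ by transposing a map over the point classifier through the adjunction $(-)_t \dashv E$, and then to show that the square is a pullback by exploiting the unit and counit of that adjunction.

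\emph{Constructing the map.} Since $\Phi$ is perfect as a dependent operator category, the point classifier $(T,t)$ is a subobject classifier. The monomorphism $m : I \to J$ therefore has a characteristic map $\chi_m : J \to T$ that exhibits $I$ as the pullback $\{t\} \times_T J$, i.e. as the special fiber $J_t$. Regard $(J,\chi_m)$ as an object of $\Phi_{/T}$ and apply the unit of the adjunction $\mathrm{fib} \dashv E$. This gives a map $\eta_{(J,\chi_m)} : J \to E(J_t) \cong E(I)$ over $T$; it is the right vertical arrow of the square. The bottom arrow $I \to E(I)$ is the one written $\eta_I$ in the statement, which I read as the canonical map exhibiting $I$ as the special fiber of $e_I : E(I) \to T$. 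In $\mathbf{Dep_{aff}}$ this is the inclusion $f_I$ of the copy of $I$, and in general it is the component of the counit $\mathrm{fib}\,E(I) \to I$, read as a pullback square over $t$.

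\emph{Commutativity and the pullback property.} The square commutes by naturality of the unit with respect to the inclusion of the special fiber: the triangle identity $(\eta_J)_t \circ \epsilon = \mathrm{id}$ says precisely that $\eta_J \circ m$ equals the inclusion of the special fiber $I \to E(I)$. For the pullback property, stack the square on top of the pullback square exhibiting $I$ as the special fiber of $e_I : E(I) \to T$. Because $\eta_J$ lies over $T$, the outer rectangle is the square exhibiting $I = J_t$ as the special fiber of $\chi_m$, which is a pullback. The lower square is also a pullback. The plain pasting lemma goes the wrong direction here, so I would argue directly.

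Take a cone $a : A \to J$ and $b : A \to I$ with $\eta_J a = f_I b$. Composing with $e_I$ gives $\chi_m a = e_I f_I b$, and this factors through $t$. Hence $a$ factors through the outer pullback $J_t = I$ via some $c : A \to I$, so $a = mc$. Then $f_I c = \eta_J m c = \eta_J a = f_I b$. Since $f_I$ is a monomorphism (it is a pullback of $t : * \to T$, a mono), we get $c = b$. Uniqueness of the mediating map follows because $m$ is monic.

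\emph{Main obstacle.} The delicate step is commutativity, i.e. checking that $\eta_J \circ m$ really is the special-fiber inclusion of $E(I)$ under the identification $J_t \cong I$. I would verify this from the triangle identity $(\eta_J)_t = \mathrm{id}$, which says exactly that $\eta_J$ restricted to the special fibers is the identity. In the concrete case $\mathbf{Dep_{aff}}$ this is immediate from the construction of the unit, since the subposet $J_t$ is sent identically to the copy of $J_t$ inside $E(J_t)$. Canonicity of the square then follows from the uniqueness of $\chi_m$.
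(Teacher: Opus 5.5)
Your proposal is correct and follows the paper's argument: the map $J \to E(I)$ is the unit of $\mathrm{fib} \dashv E$ at $(J,\chi_m)$, and the pullback property comes from combining the lower special-fiber square of $e_I$ with the outer classifying rectangle. Your explicit check of commutativity via the triangle identity fills in a step the paper leaves implicit; that identity is properly $\epsilon_{J_t}\circ(\eta_J)_t=\mathrm{id}$, which is equivalent to the order you wrote because the counit is invertible.

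One correction: the pasting lemma does go the right way here, since the lower square and the outer rectangle being pullbacks implies the upper square is one. That is exactly what the paper invokes, so your direct cone argument is simply the proof of the pasting lemma in this instance.
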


\begin{proof}
 Since $I$ is obtained from $E(I) \to T$ by applying the special fiber functor $(-)_s$, the monomorphism $I \to J$, which is classified by some map $\chi : J \to T$, the unit of adjunction yields a map $J \to E(I)$ above $T$. Now, in the following diagram, the bottom square and the full rectangle are pullbacks,

\[\begin{tikzcd}[ampersand replacement=\&]
	I \&\& J \\
	\\
	I \&\& {E(I)} \\
	\\
	{*} \&\& T
	\arrow[from=1-1, to=1-3]
	\arrow["id"', from=1-1, to=3-1]
	\arrow["\ulcorner"{anchor=center, pos=0.125}, draw=none, from=1-1, to=3-3]
	\arrow["{\eta_J}", from=1-3, to=3-3]
	\arrow["{\eta_I}"', from=3-1, to=3-3]
	\arrow[from=3-1, to=5-1]
	\arrow["\ulcorner"{anchor=center, pos=0.125}, draw=none, from=3-1, to=5-3]
	\arrow["{e_I}", from=3-3, to=5-3]
	\arrow["t"', from=5-1, to=5-3]
\end{tikzcd}\]
 therefore, so is the top square, as we wanted to establish.
\end{proof}

Recall the following definition

\begin{definition}[{\cite[Definition 7.1]{barwick2018operator}}]
 Suppose that $\Phi$ is a perfect operator category. We call a morphism $J \to I$ of $\Lambda(\Phi)$ \textbf{inert} if, the corresponding morphism $J \to TI$ in $\Phi$ has the property that the natural morphism $$J \times_{TI} I \to I$$ is an isomorphism.
 Denote by $\Lambda(\Phi)^\dagger$ the collection of inert morphisms in $\Lambda(\Phi)$.
 
 Let us call a morphism $J \to I$ of $\Lambda(\Phi)$ \textbf{active} if the corresponding morphism $J \to TI$ of $\Phi$ factors as $$J \to TJ \to TI$$ where the morphism $TJ \to TI$ is of the form $T\phi$ for some morphism $\phi$ of $\Phi$. Denote by $\Lambda(\Phi)_\dagger$ the collection of active morphisms in $\Lambda(\Phi)$.
\end{definition}

 \cref{pb_cls_inert} therefore has the following consequence:
 
\begin{corollary}
\label{inert_duality}
For $\Phi$ a perfect dependent operator category, $\Lambda(\Phi)^\dagger$ is canonically isomorphic to $(\Phi^+)^{op}$, the opposite of the wide subcategory of $\Phi$ whose arrows are the monomorphisms.
\end{corollary}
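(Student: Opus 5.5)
We need to recall Barwick's $\Lambda(\Phi)$. It is the Kleisli category of the Leinster monad $T$ on $\Phi$: objects are those of $\Phi$, and a morphism $J \to I$ in $\Lambda(\Phi)$ is a morphism $J \to TI$ in $\Phi$. Here $TI$ is the image of $I$ under the monad $T = E \circ \text{fib}$-type construction. More precisely, $TI$ is the underlying object of $E(I)$, and the unit $\eta_I : I \to TI$ is the inclusion as the special fiber. The plan is to build an identity-on-objects functor $(\Phi^+)^{op} \to \Lambda(\Phi)^\dagger$ and show it is bijective on hom-sets and compatible with composition.

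\textbf{Construction.} To a monomorphism $m : I \to J$ in $\Phi$, assign the morphism $\chi_m : J \to E(I) = TI$ given by \cref{pb_cls_inert}. Concretely, $m$ is classified by some $J \to T$ (the point classifier is a subobject classifier), and adjunction transposes this into a map $J \to E(I)$ over $T$. \cref{pb_cls_inert} says exactly that $J \times_{TI} I \cong I$, so this is an inert morphism $J \to I$ in $\Lambda(\Phi)$.

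\textbf{Inverse.} Conversely, given an inert $f : J \to TI$, form the pullback $P = J \times_{TI} I$. By inertness, the projection $P \to I$ is an isomorphism, so we get a map $m_f : I \cong P \to J$. This map is a monomorphism because $\eta_I : I \to E(I)$ is a monomorphism, being the pullback of the point $t : * \to T$, and monomorphisms are stable under pullback. The two constructions are mutually inverse:
\begin{itemize}
\item Starting from $m$, the pullback in \cref{pb_cls_inert} returns $m$ itself.
\item Starting from $f$, the composite $J \to E(I) \to T$ classifies $m_f$, since pasting pullbacks shows $m_f$ is the pullback of $t$. Then $f$ is the adjoint transpose of that classifying map, because the counit is the identity and maps into $E(I)$ over $T$ correspond to maps of special fibers. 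Uniqueness of transposes then recovers $f$.
\end{itemize}
So we get a bijection $\mathrm{Hom}_{\Phi^+}(I,J) \cong \mathrm{Hom}_{\Lambda(\Phi)^\dagger}(J,I)$.

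\textbf{Functoriality.} Identities are sent to identities, since $\chi_{\mathrm{id}_I} = \eta_I$, which is the Kleisli identity. For composition, take monos $I \xrightarrow{m} J \xrightarrow{n} K$. The Kleisli composite of $\chi_n : K \to TJ$ and $\chi_m : J \to TI$ is
\[
K \to TJ \xrightarrow{T\chi_m} TTI \xrightarrow{\mu_I} TI.
\]
We must show it equals $\chi_{nm}$. By the bijection above, it suffices to check two things: that this composite is inert, and that its pullback along $\eta_I$ recovers $nm$. For this we paste pullbacks:
\begin{itemize}
\item $\eta$ is cartesian, i.e. its naturality squares are pullbacks. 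This is Barwick's standard fact for perfect operator categories.
\item The square $\mu_I \circ \eta_{TI} = \mathrm{id}$ is compatible with the special fibers, so the pullback of $\eta_I$ along $\mu_I$ is $\eta_{TI}$.
\end{itemize}
Pulling $\eta_I$ back first along $\mu_I$, then along $T\chi_m$ (a naturality square of $\eta$), then along $\chi_n$, gives successively $m$ and then $nm$.

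I expect this composition step to be the main obstacle. It requires importing Barwick's results that $T$ is a cartesian monad, meaning that the naturality squares of $\eta$ and $\mu$ are pullbacks in the relevant sense. It also requires checking that these cartesianness facts combine correctly with the identification in \cref{pb_cls_inert}.
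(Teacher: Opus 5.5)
Your route is the paper's: its whole argument is that \cref{pb_cls_inert} yields the corollary. Your hom-set bijection is a correct and fuller version of what the paper leaves implicit. You transpose the classifying map of $m$ to get $\chi_m$, pull $\eta_I$ back along an inert $f$ to go the other way, and check both round trips by uniqueness of classifying maps and of adjoint transposes. You also do not need to import cartesianness of $\eta$. Since $Tf=E(f)$ lies over $T$, i.e.\ $e_I\circ Tf=e_J$, and $\eta_J,\eta_I$ are the special fibers of $e_J,e_I$, the pasting lemma gives it exactly as in \cref{pb_cls_inert}.

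The gap is in the composition step, which the paper does not address, specifically in your first bullet.

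\textbf{The bullet is false as written.} Suppose the pullback of $\eta_I$ along $\mu_I$ had leg $\eta_{TI}$ into $T^2I$. Then it would come with some $r\colon TI\to I$ such that $\eta_I\circ r=\mu_I\circ\eta_{TI}=\mathrm{id}_{TI}$. This forces the monomorphism $\eta_I$ to be invertible.

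\textbf{What your pasting actually needs.} The square $\mu_I\circ(\eta_{TI}\circ\eta_I)=\eta_I\circ\mathrm{id}_I$ must be a pullback. Granting this, pull back along $T\chi_m$: use the naturality square of $\eta$ at $\chi_m$, then \cref{pb_cls_inert} for $m$, to get $\eta_J\circ m$. Pulling back along $\chi_n$ then gives $n\circ m$ with identity projection to $I$. So the Kleisli composite is inert and equals $\chi_{nm}$.

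\textbf{Why your justification does not establish it.} The unit law $\mu_I\circ\eta_{TI}=\mathrm{id}$ only shows that $\eta_{TI}\circ\eta_I$ factors through the pullback, not that it is the whole pullback. Nor does the property follow from the naturality squares of $\mu$ being pullbacks. For instance, the free-monoid monad on $\mathbf{Set}$ is cartesian, yet the words of words concatenating to a single letter include $[[\,],[x]]$.

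\textbf{What to supply instead.} The property is specific to the Leinster multiplication: $e_I\circ\mu_I$ classifies the subobject $\eta_{TI}\circ\eta_I$. Equivalently, under $\mathrm{Hom}(J,TI)\cong\{(\chi\colon J\to T,\ J_\chi\to I)\}$, Kleisli composition is computed by pulling back special fibers. You have to cite or verify this from the construction of $\mu$. In $\mathbf{Dep_{aff}}$ it can be checked by hand: the unit laws together with monotonicity force $\mu_I$, and then $\mu_I^{-1}(I)=I$. Once you have it, functoriality is immediate.
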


\section{Dependent operads: the affine case}

Recall the following definition for non-dependent case:

\begin{definition}
 Suppose $\Phi$ is a perfect operator category. Then a $\Phi$-quasioperad or $\infty$-operad over $\Phi$ is an inner fibration $p : X^\otimes \to N\Lambda(\Phi)$ satisfying the following conditions.
 \begin{itemize}
  \item For every morphism $\phi : I \to J$ of $\Lambda(\Phi)^\dagger$ and every object $x \in X^\otimes_J$, there is a $p$-cocartesian edge $x \to y$ in $X^\otimes$ covering $\phi$.
  \item For any objects $I,J \in \Phi$, any object $x \in X^\otimes_I$ and $y \in X^\otimes_J$, any morphism $J \to I$ in $\Lambda(\Phi)$ and any $p$-cocartesian edges $\{y \to y_i| i \in |I| \}$ lying over the inert morphism $\{\rho_i : I \to \{i\} | i \in |I|\}$, the induced map
  $$Hom^\phi_{X^\otimes}(x,y) \to \Pi_{i \in |I|} Hom^{\rho_i \circ \phi}_{X^\otimes}(x,y_i)$$
  is an equivalence.
  \item For any object $I \in \Phi$, the $p$-cocartesian morphisms $\rho_i$ lying over the inert morphisms $\{I \to \{i\} | i \in |I|\}$ together induce an equivalence
  $$X^\otimes_I \to \Pi_{i \in |I|} X^\otimes_{\{i\}}$$
 \end{itemize}
 
\end{definition}

The key insight is that, in an operator category, we expect the object to behave as "products" of their points and require some kind of preservation of the colimits regarding the functors defining an operad. In the dependent case, the corresponding requirement involves elementary diagrams, having \cref{elm_diag_depaff} in mind for the case of $\mathbf{Dep_{aff}}$.

\begin{definition}
 Given a perfect dependent operator category $\Phi$, a dependent $(\infty-)$operad over $\Phi$ (or $\Phi$-quasioperad) is an inner fibration $O^\otimes \to \mathbf{N}\Lambda(\Phi)$ satisfying the following conditions:
 \begin{itemize}
  \item For every inert morphism $\phi : I \to J$ in $\Lambda(\Phi)$, and every object $x$ of $O^\otimes$ lying over $I$, there exists a cocartesian lift $x \to x'$ of $\phi$.
  \item For any objects $I$ and $J$ of $\Lambda(\Phi)$ and for any map $\phi : I \to J$, any objects $x \in O^\otimes_I$ and $y \in O^\otimes_J$ and any family of cocartesian edges $\{y \to y_i \;|\; i \in \mathbf{ElD}J\}$ lying over the morphisms in $\mathbf{ElD}J$, the induced map
  $$Hom^\phi(x,y) \to \varprojlim_{\rho_i \in \mathbf{ElD}J} Hom(x,y_i)^{\rho_i \circ \phi}$$
  is an equivalence.
  \item For any object $I \in \Lambda(\Phi)$ and the family of cocartesian edges $\{x \to x_k | x \in O^\otimes_I, k \in \mathbf{ElD}I\}$ lying over the morphisms $\rho_k : I \to I_k$, obtained from transposing monomorphisms $I_k \to I$ in $\mathbf{ElD}I$, is such that the induced map
  $$O^\otimes_I \to \varprojlim_{\rho_k \in \mathbf{ElD}I} O^\otimes_{I_k}$$
  is an equivalence.
 \end{itemize}
 
 We write $\mathbf{Op}_\Phi$ for the category of dependent operads over $\Phi$, where the morphisms are the functors $O_0^\otimes \to O_1^\otimes$ above $\mathbf{N}\Lambda(\Phi)$ that preserve cocartesian morphisms lying above an inert morphism.
\end{definition}

\begin{remark}
 The definition above is phrased in the quasicategorical setting to stress that the framework is naturally able to formulate notions of $\infty$-operads. However, in the present document, we will only consider $1$-categorical operads (i.e., the previous definition is applied to the nerve of a ($1$-)functor $O^\otimes \to \Lambda(\Phi)$.
\end{remark}

 A dependent $(\infty-)$operad over $\mathbf{Dep_{aff}}$ we will simply call a dependent $(\infty-)$operad.

\begin{definition}
 Given a perfect dependent operator category $\Phi$, a dependent signature over $\Phi$ is a cocartesian fibration $S^\otimes \to \mathbf{N}\Lambda(\Phi)^\dagger$ satisfying the following conditions:
 \begin{itemize}
  \item For any objects $I$ and $J$ of $\Lambda(\Phi)^\dagger$ and any map $\phi : I \to J$, any objects $x \in S^\otimes_I$ and $y \in S^\otimes_J$ and any family of cocartesian edges $\{y \to y_i \;|\; i \in \mathbf{ElD}J\}$ lying over the morphisms in $\mathbf{ElD}J$, the induced map
  $$Hom^\phi(x,y) \to \varprojlim_{\rho_i \in \mathbf{ElD}J} Hom(x,y_i)^{\rho_i \circ \phi}$$
  is an equivalence.
  \item For any object $I \in \Lambda(\Phi)^\dagger$, the family of cocartesian edges $\{x \to x_k | x \in S^\otimes_I, k \in \mathbf{ElD}I\}$ lying over the morphisms $\rho_k : I \to I_k$, obtained from transposing monomorphisms $I_k \to I$ in $\mathbf{ElD}I$, are such that the induced map
  $$S^\otimes_I \to \varprojlim_{\rho_k \in \mathbf{ElD}I} S^\otimes_{I_k}$$
  is an equivalence.
 \end{itemize}
 
 We write $\mathbf{Sig}_\Phi$ for the category of dependent signatures over $\Phi$, where the morphisms are the functors $S_0^\otimes \to S_1^\otimes$ above $\mathbf{N}\Lambda(\Phi)^\dagger$ that preserve cocartesian morphisms.
\end{definition}

There is a functor $U_\Phi : \mathbf{Op}_\Phi \to \mathbf{Sig}_\Phi$ defined by pulling back along the inclusion $\Lambda(\Phi)^\dagger \to \Lambda(\Phi)$.

\begin{definition}

 We write $\Lambda(\Phi)_0$ for the full subcategory of $\Lambda(\Phi)$ spanned by the elementary objects.
\end{definition}

\begin{lemma}
\label{ran_dep_sig_0}
 Let $\Lambda$ be a dependent operator category, and $P_0 : S^\otimes_0 \to \Lambda(\Phi)^\dagger_0$ an opfibration.
 
 Then $P_0$ extends to a dependent signature $P : S^\otimes \to \Lambda(\Phi)^\dagger$ that is unique up to equivalence.
\end{lemma}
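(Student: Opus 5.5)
The plan is to construct the extension by right Kan extension and to get uniqueness from the Segal-type condition. By \cref{inert_duality}, $\Lambda(\Phi)^\dagger \simeq (\Phi^+)^{op}$. Under this identification, $\Lambda(\Phi)^\dagger_0$ corresponds to the opposite of the full subcategory of $\Phi^+$ on elementary objects. Via the (un)straightening correspondence, an opfibration over $\Lambda(\Phi)^\dagger$ is the same as a functor $(\Phi^+)^{op} \to \mathbf{Cat}$ (or $\mathbf{Cat}_\infty$), with cocartesian edges corresponding to the transition maps. So $P_0$ amounts to a functor $F_0 : (\Phi^{+}_{\mathrm{elm}})^{op} \to \mathbf{Cat}$.

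First, define $F$ as the right Kan extension of $F_0$ along the inclusion $\iota^{op}$. The pointwise formula at $I$ is a limit over the comma category of elementary objects $X$ with a monomorphism $X \to I$. This is exactly $\mathbf{ElD}I$, which is finite by the first axiom of a dependent operator category. Hence
$$F(I) = \varprojlim_{(X \to I) \in \mathbf{ElD}I} F_0(X)$$
exists, and we let $P : S^\otimes \to \Lambda(\Phi)^\dagger$ be its unstraightening. Since $\iota$ is fully faithful, $F$ restricts to $F_0$, so $P$ extends $P_0$.

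Second, verify the two signature axioms. The second axiom, $S^\otimes_I \simeq \varprojlim_{\mathbf{ElD}I} S^\otimes_{I_k}$, follows directly from the pointwise formula, once we check that the maps $\rho_k$ are the cocartesian transports. This holds because the cone maps of a Kan extension are the transition maps $F(I) \to F(I_k) = F_0(I_k)$.

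The first axiom concerns mapping spaces over $\phi : I \to J$. In a cocartesian fibration,
$$Hom^\phi(x,y) \simeq Hom_{S^\otimes_J}(\phi_! x, y),$$
and mapping spaces in a limit of categories are the limits of mapping spaces. Writing $y$ as the compatible family $(y_i)$ and using $(\rho_i)_! \phi_! x = (\rho_i \phi)_! x$, we get
$$Hom^\phi(x,y) \simeq \varprojlim_{\mathbf{ElD}J} Hom(x,y_i)^{\rho_i \circ \phi},$$
which is the first axiom.

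Uniqueness is the step I expect to need the most care. Let $P'$ be any dependent signature extending $P_0$, with straightening $F'$. Its second axiom says that the canonical map $F'(I) \to \varprojlim_{\mathbf{ElD}I} F'(I_k) = \varprojlim_{\mathbf{ElD}I} F_0(I_k)$ is an equivalence. That canonical map is the component of the unit comparison $F' \to \mathrm{Ran}_{\iota^{op}}(F'|)$. The point to check is that the cones $\{\rho_k\}$ appearing in the axiom are exactly the cones induced by restriction to the comma category, naturally in $I$. Once this identification is made, the comparison is a natural equivalence $F' \simeq F$, hence $P' \simeq P$ over $\Lambda(\Phi)^\dagger$, preserving cocartesian edges. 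In the $1$-categorical setting of the paper, "unique up to equivalence" then means unique up to equivalence of fibrations, which is what this argument gives.
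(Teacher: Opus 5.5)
Your proposal is correct and follows the same route as the paper. You get existence by right Kan extending the (pseudo-)functor corresponding to $P_0$ along $\Lambda(\Phi)^\dagger_0 \hookrightarrow \Lambda(\Phi)^\dagger$, and uniqueness from the fact that the axioms force any extension to agree with that Kan extension. Your version adds detail the paper leaves implicit: identifying the comma category with $\mathbf{ElD}I$ via $\Lambda(\Phi)^\dagger \simeq (\Phi^+)^{op}$, and deriving the mapping-space axiom from the fiber axiom via $Hom^\phi(x,y) \simeq Hom_{S^\otimes_J}(\phi_! x, y)$.
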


\begin{proof}
 It follows from the definition of a dependent signature that the right Kan extension of a pseudo-functor $\Lambda(\Phi)^\dagger \to \mathbf{Cat}$ corresponding to the opfibration $P_0 : S^\otimes_0 \to \Lambda(\Phi)^\dagger_0$ along the inclusion $\Lambda(\Phi)^\dagger_0 \to \Lambda(\Phi)^\dagger$ yields a pseudo-functor corresponding to an opfibration $P : S^\otimes \to \Lambda(\Phi)^\dagger$ that satisfies the axioms for a dependent signature. Conversely, given such a dependent signature, the axioms imply that it must be equivalent to the opfibration obtained from $P_0$ by right Kan extension.
\end{proof}

\begin{lemma}
\label{ran_dep_op_0}
 Let $\Phi$ be a dependent operator category, and $P_0 : O^\otimes_0 \to \Lambda(\Phi)_0$ a functor satisfying the first condition for dependent operads, namely, such that for every inert morphism $\phi : F \to F'$ in $\Lambda(\Phi)_0$, and every object $x'$ of $O^\otimes_0$ lying over $F'$, there exists a cocartesian lift $x  \to x'$ of $\phi$.
 
 Then $P_0$ extends to a dependent operad $P : O^\otimes \to \Lambda(\Phi)$ that is unique up to equivalence.
\end{lemma}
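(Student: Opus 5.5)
The plan is to mimic the proof of \cref{ran_dep_sig_0}, now keeping track of all morphisms of $\Lambda(\Phi)$ and not only the inert ones. The extension will be built in two layers: objects and inert maps first, then general morphisms.

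First, the objects and inert structure. Restrict $P_0$ along $\Lambda(\Phi)^\dagger_0 \subset \Lambda(\Phi)_0$. By hypothesis this restriction has cocartesian lifts of inert maps, so it is (up to the usual straightening) an opfibration over $\Lambda(\Phi)^\dagger_0$. Applying \cref{ran_dep_sig_0}, we get a dependent signature $S^\otimes \to \Lambda(\Phi)^\dagger$. Its fibers are
$$S^\otimes_I \simeq \varprojlim_{\rho_k \in \mathbf{ElD}I} (O_0^\otimes)_{I_k},$$
and we take these as the objects of $O^\otimes$ over $I$. Here \cref{inert_duality} identifies inert maps with (opposite) monomorphisms, so the indexing by $\mathbf{ElD}I$ makes sense.

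Second, the morphisms. For $x \in O^\otimes_I$ and $y \in O^\otimes_J$ with components $(y_k)_{k \in \mathbf{ElD}J}$, and for $\phi : I \to J$ in $\Lambda(\Phi)$, we are forced by the second axiom to set
$$Hom^\phi(x,y) := \varprojlim_{\rho_k \in \mathbf{ElD}J} Hom^{\rho_k\circ\phi}(x,y_k).$$
It then remains to define $Hom^{\psi}(x,y_k)$ when $y_k$ lies over an elementary $J_k$ but $x$ lies over an arbitrary $I$. For this, factor $\psi : I \to J_k$ into an inert map $I \to I'$ followed by an active map $I' \to J_k$, using the inert/active factorization of $\Lambda(\Phi)$ coming from perfectness. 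Then $I'$ is itself elementary, because it receives a strong epimorphism from a subobject of the elementary object $J_k$, by the axioms of a dependent operator category. We then define $Hom^\psi(x,y_k) := Hom_{O_0^\otimes}^{I'\to J_k}(x', y_k)$, where $x \to x'$ is the cocartesian lift of the inert part. Composition is induced componentwise from $O_0^\otimes$, through the universal property of the limits.

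Third, checks and uniqueness. We must verify that:
\begin{itemize}
\item composition is well defined and associative, i.e.\ independent of the chosen factorization, which follows from uniqueness of inert/active factorizations up to unique isomorphism;
\item the result is an inner fibration (automatic in the $1$-categorical setting);
\item the inert maps still admit cocartesian lifts, because inert maps compose with the limit projections to inert maps;
\item the three axioms hold, the second and third holding by construction;
\item over $\Lambda(\Phi)_0$ the construction recovers $O_0^\otimes$, because $\mathbf{ElD}F$ has the terminal object $\mathrm{id}_F$ when $F$ is elementary.
\end{itemize}
Uniqueness follows as in \cref{ran_dep_sig_0}. Any dependent operad $Q$ extending $P_0$ has, by the third axiom, fibers equivalent to the above limits. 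By the second axiom together with the cocartesianness of inert lifts, its hom-sets are computed by the same formula. So the identity on $O_0^\otimes$ extends to an equivalence $O^\otimes \simeq Q$ over $\Lambda(\Phi)$.

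The main obstacle is the second step: showing that composition of general (non-inert) morphisms is well defined and lands compatibly in the limits. This requires that the inert/active factorization interacts well with the elementary diagrams. Concretely, the pullback of an elementary subobject along the relevant map must again be elementary, which comes from the closure of elementary objects under pullback along strong epimorphisms. I would first try to reduce everything to the case where the target is elementary, and treat composites of two active maps separately.
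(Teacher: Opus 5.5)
Your outline is the paper's (build the signature with \cref{ran_dep_sig_0}, then let the second axiom force the hom-sets), but the step you add to handle sources over non-elementary objects fails. The claim that the middle object $I'$ of the inert/active factorization of $\psi : I \to J_k$ is elementary is false. In $\Lambda(\mathbf{Dep_{aff}})$, the active map $\mathbb{1}\amalg\mathbb{1} \to \mathbb{1}$ (the arity of $comp_b$ in the graded monoid example) is its own factorization, and $\mathbb{1}\amalg\mathbb{1}$ has no initial object. Your justification runs in the wrong direction. The object $I'$ is a subobject of $I$ equipped with a map $I' \to J_k$, and it receives no map from $J_k$. Moreover, the axioms of a dependent operator category only transport elementarity from the source of a strong epimorphism to its target, and along pullbacks of strong epimorphisms. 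Hence $x'$ is in general not an object of $O^\otimes_0$, and $Hom^{I'\to J_k}_{O^\otimes_0}(x',y_k)$ is undefined. This happens even when $I$ and $J_k$ are elementary. For example, the map $\mathbb{2}\amalg_{\mathbb{1}}\mathbb{2} \to \mathbb{2} \to \mathbb{1}$ (active, then restriction to the codomain) factors through $\mathbb{1}\amalg\mathbb{1}$.

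This is not a repairable detail. No axiom constrains the hom-sets over active maps with elementary target and non-elementary source: the second axiom only decomposes the target. These hom-sets are forced by $O^\otimes_0$ only when $x'$ is the inert restriction of an object over an elementary poset. That is how $comp_b$ is forced by $comp_e$, since $b\amalg b$ is the restriction of $e\amalg_b e$, but in general no such object exists. Here is a concrete failure of uniqueness:
\begin{itemize}
\item An opfibration out of a discrete poset lands in maximal elements. So the full subcategory of $\Lambda(\mathbf{Dep_{aff}})$ on discrete posets is $\mathbf{Fin}_*$.
\item Consequently, every ordinary set-valued symmetric operad yields a dependent operad, with empty fibers over non-discrete posets.
\item Take the initial operad (only the unary identity) and the non-unital commutative operad (one $n$-ary operation for each $n \geq 1$). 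They have the same restriction to $\Lambda(\mathbf{Dep_{aff}})_0$: a single object over $\mathbb{1}$ with only its identity, because $\mathbb{1}$ is the only elementary discrete poset.
\item They differ over $\mathbb{1}\amalg\mathbb{1}\to\mathbb{1}$.
\end{itemize}
So uniqueness cannot be derived from the stated hypotheses. One must also prescribe these hom-sets, or settle for a canonical (e.g. initial) extension. The paper's two-line proof passes over exactly this point (\emph{in the unique way}). Your proposal follows it faithfully, and the step you made explicit is precisely where the argument breaks.
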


\begin{proof}
 Using \cref{ran_dep_sig_0}, we first extend the "restriction" of $P_0$ to $\Lambda(\Phi)^\dagger_0$ to a dependent signature $S^\otimes \to \Lambda(\Phi)^\dagger$.
 Following the axioms for a dependent signature, we extend $O^\otimes_0$ for non-elementary objects and morphisms lying over any arrow of $\Lambda(\Phi)$ (not just those between elementary objects) in the unique way as to get a dependent operad. This extension is therefore unique up to equivalence.
\end{proof}

\section{Dependent operads over $\mathbf{Dep_{aff}}$}

In this section, we restrict our attention to the dependent operator category $\mathbf{Dep_{aff}}$.

\begin{example}
 Here, our running example will be that of graded monoids, that we construct by making use of  \cref{ran_dep_op_0}: we define $$(S_\mathbf{GMon}^\otimes)_0 \to \Lambda(\mathbf{Dep_{aff}})^\dagger_0$$ as the fibration that has exactly two non-empty fibers, which are isomorphic to the one point set, over the one object poset $\mathbb{1}$ and over the walking arrow $\mathbb{2}$. We denote the corresponding objects of $S_\mathbf{GMon}$ respectively by $b$ (that we think of as the base monoid) and $e$ (that we think of as the "total space" of monoids graded over the base). 
 
 The dependent operad for categories $$T_\mathbf{GMon} : O_\mathbf{GMon} \to \Lambda(\mathbf{Dep_{aff}})$$ has the same objects as $S_\mathbf{GMon}$ and is generated by two additional morphisms:
 \begin{itemize}
  \item $id_b : * \to b$ lying over the inclusion $\emptyset \to \mathbb{1}$.
  \item $comp_b : b \amalg b \to b$, where $b \amalg b$ is the (unique) object above the poset $\mathbb{1} \amalg \mathbb{1}$, lying over the unique functor $\mathbb{1} \amalg \mathbb{1} \to \mathbb{1}$.
  \item $id_e : b \to e$ lying over the domain inclusion $\mathbb{1} \to \mathbb{2}$.
  \item $comp_e : e \amalg_b e \to e$, where $b \amalg_b e$ is the (unique) object above the poset the span poset $\mathbb{2} \amalg_\mathbb{1} \mathbb{2}$, lying over the unique functor $\mathbb{2} \amalg_\mathbb{1} \mathbb{2} \to \mathbb{2}$ that preserves the domain and sends both codomains of legs to the codomain of the walking arrow $\mathbb{2}$.
 \end{itemize}
 These morphisms are subject to the usual equations for graded monoids.
\end{example}

\begin{definition}
Define a category $\mathbf{DepSet_{aff}}$ as follows:
\begin{itemize}
 \item The objects are pairs $(I, X)$ where $I \in \mathbf{Dep_{aff}}$ and $X \in \mathbf{Set}^{I}$.
 \item The morphisms from $(I, X)$ to $(J, Y)$ are pairs $(k,F)$ where $k : I \to TJ$ is a morphism from $I$ to $J$ in $\Lambda(\mathbf{Dep_{aff}})$ and $F \in Hom_{\mathbf{Set}^{J}}(\eta_J^* k_* X,Y)$, where $\eta_J : J \to TJ$ corresponds to the identity map on $J$ in $\Lambda(\mathbf{Dep_{aff}})$.
\end{itemize}
Composition is defined as follows:
\begin{itemize}
 \item The composition of the first component is that of $\Lambda(\mathbf{Dep_{aff}})$.
\item Given $(k_0,F_0) : (I,X) \to (J,Y)$ and $(k_1,F_1) : (J,Y) \to (K,Z)$, define the composite of $F_0$ and $F_1$ to be the image of $(F_0,F_1)$ under the map
\begin{align*}
& Hom_{\mathbf{Set}^{J}}(\eta_J^* (k_0)_* X,Y) \times Hom_{\mathbf{Set}^{K}}(\eta_K^* (k_1)_* Y,Z)\\
& \simeq Hom_{\mathbf{Set}^{TJ}}((k_0)_* X,(\eta_J)_*Y) \times Hom_{\mathbf{Set}^{TK}}((k_1)_* Y,(\eta_K)_*Z)\\
& \to  Hom_{\mathbf{Set}^{T^2K}}((Tk_1)_*(k_0)_* X,(Tk_1)_*(\eta_J)_*Y) \times Hom_{\mathbf{Set}^{TK}}(((k_1)_* Y, (\eta_K)_*Z)\\
& \to  Hom_{\mathbf{Set}{TK}}((\mu_{K})_*(Tk_1)_*(k_0)_* X,(\mu_{K})_*(Tk_1)_*(\eta_J)_*Y) \times Hom_{\mathbf{Set}^{TK}}((k_1)_* Y, (\eta_K)_*Z)\\
& \simeq  Hom_{\mathbf{Set}{TK}}((\mu_{K})_*(Tk_1)_*(k_0)_* X,(k_1)_* Y) \times Hom_{\mathbf{Set}^{TK}}((k_1)_* Y, (\eta_K)_*Z)\\
& \to  Hom_{\mathbf{Set}{TK}}((\mu_{K})_*(Tk_1)_*(k_0)_* X,(\eta_K)_*Z)\\
& \simeq Hom_{\mathbf{Set}^{K}}(\eta_K^*(\mu_{K})_*(Tk_1)_*(k_0)_* X,Z)\\
\end{align*}
\end{itemize}
\end{definition}

\begin{proposition}
\label{depset_cat}
 The construction above indeed defines a category, namely, $\mathbf{DepSet_{aff}}$ admits identity morphisms and composition is associative.
\end{proposition}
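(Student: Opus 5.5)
The plan is to recognise $\mathbf{DepSet_{aff}}$ as a Kleisli-type construction: it is a category of morphisms $(I,X)\to(J,Y)$ lying over the Kleisli category $\Lambda(\mathbf{Dep_{aff}})$ of the monad $(T,\eta,\mu)$. The second component lives in the fibres, with the left Kan extensions $k_*$ as transport. With this in view, the identity and associativity axioms should reduce to the monad laws for $T$ together with coherence of the pseudofunctor $k\mapsto k_*$. Throughout, $(-)_*$ denotes left Kan extension (left adjoint to restriction $(-)^*$). I use pseudofunctoriality $(gf)_*\simeq g_*f_*$ and the adjunctions $(\eta_J)_*\dashv\eta_J^*$, which yield the bijections $Hom(\eta_J^*k_*X,Y)\simeq Hom(k_*X,(\eta_J)_*Y)$ used in the definition. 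One point needs care: these bijections require $\eta_J^*$ to be the right adjoint of $(\eta_J)_*$. I also use that $\eta_J$ is fully faithful and injective on objects, a cosieve inclusion in the spirit of \cref{pb_cls_inert}, so that the unit $Y\to\eta_J^*(\eta_J)_*Y$ is an isomorphism.

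For identities, I take $(\eta_I,\epsilon)$ as the identity on $(I,X)$. Here $\epsilon:\eta_I^*(\eta_I)_*X\to X$ is the inverse of the unit isomorphism just mentioned; its transpose is the identity of $(\eta_I)_*X$. Now compose with $(k,F):(I,X)\to(J,Y)$ on either side. The first components give $k$ by the unit laws of the monad, since $\mu\circ Tk\circ\eta=k$ and $\mu\circ T\eta\circ k=k$. For the second components, I unwind the defining chain of the composite. The first map there is $(T\eta_J)_*$ or $(Tk)_*$ applied to the transposed morphism, followed by $(\mu)_*$. The canonical isomorphisms $(\mu_J)_*(T\eta_J)_*\simeq\mathrm{id}$ and $(\mu_J)_*(\eta_{TJ})_*\simeq\mathrm{id}$ are used implicitly in the step labelled ``$\simeq$''. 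With these, the transposed map becomes the identity, and the composite returns $F$. This is routine once the isomorphisms are fixed consistently.

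Associativity is the main obstacle. Take three composable morphisms $(k_0,F_0),(k_1,F_1),(k_2,F_2)$. The first components agree by associativity in the Kleisli category, i.e.\ $\mu\circ T\mu=\mu\circ\mu T$ and naturality of $\mu$. For the second components I would pass entirely to the transposed form. There, a morphism is $\tilde F:k_*X\to(\eta_J)_*Y$ in $\mathbf{Set}^{TJ}$, and the composite is
\[
\widetilde{F_1\circ F_0}=\tilde F_1\circ(\mu_K)_*(Tk_1)_*\tilde F_0,
\]
read modulo the canonical isomorphisms $(\mu_K)_*(Tk_1)_*(\eta_J)_*\simeq(k_1)_*$. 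Both bracketings then become composites of the form $\tilde F_2\circ(\ldots)_*\tilde F_1\circ(\ldots)_*\tilde F_0$. Equality follows once the coherence isomorphisms on the two sides agree. These isomorphisms come from the monad laws, realised as equalities of functors between finite posets, and from the pseudofunctoriality constraints of $(-)_*$, which satisfy the usual cocycle condition. So the key step is a coherence check: the two isomorphisms
\[
(\mu_L)_*(Tk_2)_*(\mu_K)_*(Tk_1)_*(\ldots)\simeq(\mu_L)_*(T(\mu_L\circ Tk_2\circ k_1))_*(\ldots)
\]
must coincide. I would handle this by noting that all functors involved are strict functors between posets, so the monad equations hold strictly. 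The resulting isomorphisms are then instances of the canonical $(gf)_*\simeq g_*f_*$, and any two such instances between the same composites agree by the universal property of left Kan extensions.
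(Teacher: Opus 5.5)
Your strategy matches the paper's: identities come from $\eta_I$ being fully faithful, and associativity from associativity in the Kleisli category $\Lambda(\mathbf{Dep_{aff}})$ together with coherence of the Kan-extension isomorphisms. You are more explicit about that coherence than the paper, which only asserts it. However, your set-up has the adjunction backwards, and the step it feeds fails as written.

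The bijection $Hom_{\mathbf{Set}^{J}}(\eta_J^*k_*X,Y)\simeq Hom_{\mathbf{Set}^{TJ}}(k_*X,(\eta_J)_*Y)$ says exactly that $\eta_J^*$ is \emph{left} adjoint to $(\eta_J)_*$. So $(\eta_J)_*$ must be \emph{right} Kan extension. An adjunction $(\eta_J)_*\dashv\eta_J^*$ instead gives $Hom((\eta_J)_*Y,A)\simeq Hom(Y,\eta_J^*A)$. With your left-Kan-extension convention the bijection is false. Since $\eta_J$ is a cosieve inclusion, left Kan extension along it is extension by $\emptyset$; in particular it is empty at the isolated object $\star$ of $TJ=E(J)$. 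Take $A$ and $Y$ to be the terminal functors. Then $Hom(A,(\eta_J)_!Y)=\emptyset$, while $Hom(\eta_J^*A,Y)$ is a singleton. So under your reading the composition law you are analysing is not even defined. Your sentence ``these bijections require $\eta_J^*$ to be the right adjoint of $(\eta_J)_*$'' has the direction reversed.

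The fix is to read $(-)_*$ as right Kan extension. This is forced by the definition and consistent with the paper writing $(-)_!$ for left Kan extension in Section 5. The invertible map your identity argument needs is then the counit $\eta_I^*(\eta_I)_*X\to X$ of $\eta_I^*\dashv(\eta_I)_*$. It is invertible because $\eta_I$ is fully faithful, and its transpose is $\mathrm{id}_{(\eta_I)_*X}$ by a triangle identity. The rest of your proposal holds verbatim for right Kan extensions:
\begin{itemize}
\item pseudofunctoriality $(gf)_*\simeq g_*f_*$;
\item the isomorphisms $(\mu_K)_*(Tk_1)_*(\eta_J)_*\simeq(k_1)_*$, which come from the strict equation $\mu_K\circ Tk_1\circ\eta_J=k_1$;
\item the coherence of the canonical isomorphisms via the universal property.
\end{itemize}
With that correction, your argument goes through.
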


\begin{proof}

 Consider an object $(I,X)$ of $\mathbf{DepSet_{aff}}$. Since $\eta_I$ is a full inclusion, we have a canonical natural isomorphism $\eta_I^* (\eta_I)_* \simeq id_{\mathbf{Set^{I}}}$. In particular, the identity morphism $id_X : X \to X$ yields an arrow $(I,X) \to (I,X)$ in $\mathbf{DepSet_{aff}}$, that can be checked to define the identity arrow on $(I,X)$.
 
 For associativity of composition, given three composable arrows $(I,X) \to (J,Y) \to (K,Z) \to (L,W)$ in $\mathbf{DepSet_{aff}}$, the first way to compose them yields the following arrow (on the right)
\[\begin{tikzcd}[ampersand replacement=\&]
	{(\mu_{K})_*(Tk_1)_*(k_0)_* X} \& {(k_2)_*Z} \& {(\mu_{L})_*(Tk_2)_* (\mu_{K})_*(Tk_1)_*(k_0)_* X} \\
	{(k_1)_* Y} \&\& {(\mu_{L})_*(Tk_2)_* (k_1)_* Y} \\
	{(\eta_K)_*Z} \& {(\eta_L)_*W} \& {(\mu_{L})_*(Tk_2)_* (\eta_K)_*Z \simeq (k_2)_* Z} \\
	\&\& {(\eta_L)_*W}
	\arrow[from=1-1, to=2-1]
	\arrow[from=1-2, to=3-2]
	\arrow[from=1-3, to=2-3]
	\arrow[from=2-1, to=3-1]
	\arrow[from=2-3, to=3-3]
	\arrow[from=3-3, to=4-3]
\end{tikzcd}\]
while the second way yields the following one (again, on the right):
\[\begin{tikzcd}[ampersand replacement=\&]
	{(k_0)_*X} \& {(\mu_{L})_*(Tk_2)_*(k_1)_* Y} \& {(\mu_{L})_*(T\mu_{L})_*(T^2k_2)_*(Tk_1)_* (k_0)_*X} \\
	{(\eta_J)_*Y} \& {(k_2)_* Z} \& {(\mu_{L})_*(T\mu_{L})_*(T^2k_2)_*(Tk_1)_* (\eta_J)_*Y \simeq (\mu_{L})_*(Tk_2)_*(k_1)_* Y} \\
	\& {(\eta_L)_*W} \& {(k_2)_* Z} \\
	\&\& {(\eta_L)_*W}
	\arrow[from=1-1, to=2-1]
	\arrow[from=1-2, to=2-2]
	\arrow[from=1-3, to=2-3]
	\arrow[from=2-2, to=3-2]
	\arrow[from=2-3, to=3-3]
	\arrow[from=3-3, to=4-3]
\end{tikzcd}\]
The two ways are equal because of associativity of composition in the Kleisli category $\Lambda(\mathbf{Dep_{aff}})$ and in $\mathbf{Set}^{L}$.
\end{proof}

\begin{remark}
 The proof of \cref{depset_cat} suggest that we may also consider any derivator instead of the derivator represented by $\mathbf{Set}$ (given by the $2$-functor $I \mapsto \mathbf{Set}^I$).
\end{remark}

\begin{proposition}
 Together with the projection functor $\mathbf{DepSet_{aff}} \to \Lambda(\mathbf{Dep_{aff}})$, the previous construction defines a dependent operad.
\end{proposition}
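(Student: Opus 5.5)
We check the three axioms for a dependent operad for the nerve of the projection $p : \mathbf{DepSet_{aff}} \to \Lambda(\mathbf{Dep_{aff}})$, $(I,X) \mapsto I$. Since this is the nerve of a $1$-functor, it is automatically an inner fibration. All Hom-equivalences reduce to bijections of sets, and the equivalence of fibers to an equivalence of categories. Throughout, the description of hom-sets is
$$Hom^{k}((I,X),(J,Y)) \simeq Hom_{\mathbf{Set}^{J}}(\eta_J^* k_* X, Y),$$
so everything follows from properties of left Kan extensions $k_*$ along functors $I \to TJ$ and restrictions along the full inclusion $\eta_J$.

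\textbf{Inert cocartesian lifts.} By \cref{inert_duality}, an inert morphism $\phi : J \to I$ in $\Lambda(\mathbf{Dep_{aff}})$ corresponds to a monomorphism, i.e. a cosieve inclusion $m : I \to J$, and by \cref{pb_cls_inert} the corresponding map $J \to TI$ fits in a pullback with $\eta_I$. Given $(J,Y)$, I propose the lift $(J,Y) \to (I, m^*Y)$, with second component the canonical iso $\eta_I^* \phi_* Y \simeq m^* Y$. This iso is where the main computation lies: the left Kan extension along $\phi : J \to TI$, restricted to $I \subset TI$, should be computed pointwise on comma categories $\phi \downarrow \eta_I(i)$. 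Because the square is a pullback and $I$ is a cosieve, these comma categories have terminal objects given by $m(i)$, which yields the iso. Cocartesianness then amounts to the bijection
$$Hom^{\psi\circ\phi}(Y,Z) \simeq Hom^{\psi}(m^*Y,Z),$$
which follows from the composition formula together with the identity $(\mu)_*(T\psi)_*\phi_* \simeq (\mu)_*(T\psi)_*(\eta_I)_* m^*$ on the relevant part. I expect this step to be the main obstacle: making explicit the Beck–Chevalley-type identification of $\eta^*\phi_*$ with $m^*$ and checking that it is compatible with the Kleisli composition $\mu \circ T$.

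\textbf{Fibers.} The fiber over $I$ is $\mathbf{Set}^I$, with morphisms over $\mathrm{id}$ given by $\eta_I^*(\eta_I)_* X \simeq X \to Y$. The inert lifts along $\rho_k$ are the restrictions $X \mapsto X|_{I_k}$ to the cosieves $I_{k/}$. The required equivalence
$$\mathbf{Set}^I \simeq \varprojlim_{k \in \mathbf{ElD}I} \mathbf{Set}^{I_k}$$
follows from \cref{elm_diag_depaff} and the subsequent remark that $I$ is the colimit of $D_I'$ in $\mathbf{FinPos}$, hence in $\mathbf{Cat}$ (the diagram consists of full inclusions). Since $\mathbf{Set}^{(-)}$ sends such colimits of posets to limits, we obtain the equivalence.

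\textbf{Hom condition.} Given $\phi : I \to J$ and cocartesian edges $(J,Y) \to (J_k, Y|_{J_k})$, the map
$$Hom_{\mathbf{Set}^J}(\eta_J^*\phi_*X, Y) \to \varprojlim_k Hom_{\mathbf{Set}^{J_k}}(\eta_{J_k}^*(\rho_k\phi)_*X, Y|_{J_k})$$
is identified, using the iso from the first step, with
$$Hom(Z,Y) \to \varprojlim_k Hom(Z|_{J_k}, Y|_{J_k}), \qquad Z := \eta_J^*\phi_*X.$$
This is a bijection by the fiber equivalence just established, since that equivalence is fully faithful. This concludes the plan.
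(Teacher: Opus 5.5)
Your proposal is correct and follows the paper's proof. Cocartesian lifts of inert maps are the restrictions $k_t^*X$, justified by the Beck--Chevalley isomorphism $\eta_J^*k_*\simeq k_t^*$ for the pullback square of \cref{pb_cls_inert}, and conditions 2 and 3 come from $\mathbf{Set}^{(-)}$ turning $I\simeq\varinjlim D_I'$ into a limit; your reduction of condition 2 to full faithfulness of the fiber comparison is a detail the paper leaves implicit.

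Two small fixes. First, the paper's composition formula uses $\eta_J^*\dashv(\eta_J)_*$, so $(-)_*$ is right Kan extension, and your pullback-plus-cosieve argument is exactly what makes $m(i)$ initial in $\eta_I(i)\downarrow\phi$. If you use left Kan extensions instead, terminality of $m(i)$ in $\phi\downarrow\eta_I(i)$ needs that $\phi$ is an opfibration: lift $\phi(j)\le\eta_I(i)$ cocartesianly into the fiber $\{m(i)\}$. Second, check directly that $D_I'$ is a colimit in $\mathbf{Cat}$, using that every arrow $i\le i'$ of $I$ lies in $I_{i/}$; fullness of the inclusions alone does not make a $\mathbf{FinPos}$-colimit a $\mathbf{Cat}$-colimit.
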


\begin{proof}
 Given an inert morphism $k : I \to TJ$ in $\Lambda(\mathbf{Dep_{aff}})$, we have a pullback square

\[\begin{tikzcd}[ampersand replacement=\&]
	J \&\& I \\
	\\
	J \&\& TJ
	\arrow["{k^t}", from=1-1, to=1-3]
	\arrow["{id_J}"', from=1-1, to=3-1]
	\arrow["k", from=1-3, to=3-3]
	\arrow["{\eta_J}"', from=3-1, to=3-3]
\end{tikzcd}\]
by definition.
Since $\eta_j : J \to TJ$ is an opfibration, this is an exact square and we have that
$\eta_J^* k_* \simeq k_t^*$. For $X$ an object of $\mathbf{DepSet_{aff}}$ lying over $I$, that is $X \in \mathbf{Set}^I$, we can consider the object $Y := k_t^* X$ which lies over $J$, and together with the identity map $id_{k_t^* X} \in Hom(k_t^*X,k_t^*X)$, this provides a lifting of $k$ with domain $X$ that is clearly cocartesian.

Conditions 2 and 3 for dependent operads also clearly hold, using that the mapping $I \mapsto \mathbf{Set}^{I}$ sends colimits in $\mathbf{Dep_{aff}}$ to limits in $\mathbf{CAT}$.
\end{proof}

\begin{definition}
\label{depset_sig}
We write $\mathbf{DepSet_{aff}}^*$ for the image under $U_\mathbf{Dep_{aff}} : \mathbf{Op} \to \mathbf{Sig}$, that we take the pullback in the diagram below:
\[\begin{tikzcd}[ampersand replacement=\&]
	{\mathbf{DepSet_{aff}}^*} \&\& {\mathbf{DepSet_{aff}}} \\
	\\
	{\Lambda(\mathbf{Dep_{aff}})^\dagger} \&\& {\Lambda(\mathbf{Dep_{aff}})}
	\arrow[from=1-1, to=1-3]
	\arrow[from=1-1, to=3-1]
	\arrow["\ulcorner"{anchor=center, pos=0.125}, draw=none, from=1-1, to=3-3]
	\arrow[from=1-3, to=3-3]
	\arrow[from=3-1, to=3-3]
\end{tikzcd}\]
\end{definition}

\begin{remark}
 It follows that $\mathbf{DepSet_{aff}}^* \to \Lambda(\mathbf{Dep_{aff}})^\dagger$ is an opfibration, and that it corresponds under the Grothendieck construction to the functor $\Lambda(\mathbf{Dep_{aff}})^\dagger \simeq (\mathbf{Dep_{aff}}^+)^{op} \to \mathbf{CAT}$ mapping $I \in  \mathbf{Dep_{aff}}_s$ to $\mathbf{Set}^{I^{op}}$ (and acting by precomposition).
\end{remark}

\begin{definition}
\label{algebra_affine}
 Given a dependent operad $O : O^\otimes \to \Lambda(\mathbf{Dep_{aff}})$, an algebra (in dependent sets over the corresponding signature) over this dependent operad is defined to be a morphism of dependent operads $\alpha : O^\otimes \to \mathbf{DepSet_{aff}}$: 
\[\begin{tikzcd}[ampersand replacement=\&]
	{O^\otimes} \&\&\&\& {\mathbf{DepSet_{aff}}} \\
	\\
	\&\& {\Lambda(\mathbf{Dep_{aff}})}
	\arrow["\alpha", from=1-1, to=1-5]
	\arrow[from=1-1, to=3-3]
	\arrow[from=1-5, to=3-3]
\end{tikzcd}\]

We write $\mathbf{Alg}_{O}$ for the category of algebras over the dependent operad $O : O^\otimes \to \lambda(\mathbf{Dep_{aff}})$, whose objects are such functors and whose morphisms are the natural transformations between them.
\end{definition}

\begin{example}
 An algebra $\alpha$ for the graded monoids operad $T_\mathbf{GMon} : O_\mathbf{GMon} \to \Lambda(\mathbf{Dep_{aff}})$ is given by a set $B$ (the image of $b$ under the $\alpha : O_\mathbf{GMon} \to \mathbf{DepSet_{aff}}$) and a function $E \to B$ (the image of $e$ under the $\alpha$, which is an element of $\mathbf{Set}^{\mathbb{2}}$, such as restriction to the codomain yields $B$).
\end{example}

\begin{definition}
\label{dependent_set_over_signature_affine}
Given a dependent signature $S : O^\otimes \to \Lambda(\mathbf{Dep_{aff}})^\dagger$, we define a dependent set over this signature to be a morphism from $S$ to $\mathbf{DepSet_{aff}} \to \Lambda(\mathbf{\mathbf{Dep_{aff}}})^\dagger$ (in the category of cocartesian fibrations), pictorially:
\[\begin{tikzcd}[ampersand replacement=\&]
	{O^\otimes} \&\& {\mathbf{DepSet_{aff}}^*} \\
	\\
	\& {\Lambda(\mathbf{Dep_{aff}})^\dagger}
	\arrow[from=1-1, to=1-3]
	\arrow[from=1-1, to=3-2]
	\arrow[from=1-3, to=3-2]
\end{tikzcd}\] 

We write $\mathbf{DepSet^S_{aff}}$ for the category of dependent sets over $S$.
\end{definition}

\begin{proposition}
\label{presheaf_topos}
 $\mathbf{DepSet^S_{aff}}$ is equivalent to a presheaf topos.
\end{proposition}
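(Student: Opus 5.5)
The plan is to reduce everything to the elementary part using \cref{ran_dep_sig_0}, and then to identify the resulting category of morphisms of opfibrations with presheaves on a category of elements.

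\textbf{Step 1: restrict to the elementary part.} A morphism of cocartesian fibrations $S^\otimes \to \mathbf{DepSet_{aff}}^*$ over $\Lambda(\mathbf{Dep_{aff}})^\dagger$ is the same as a (pseudo)natural transformation between the corresponding pseudo-functors $\Lambda(\mathbf{Dep_{aff}})^\dagger \to \mathbf{CAT}$. Both fibrations are dependent signatures: for $\mathbf{DepSet_{aff}}^*$ this follows from the previous proposition together with \cref{depset_sig}. By \cref{ran_dep_sig_0}, each is the right Kan extension of its restriction to $\Lambda(\mathbf{Dep_{aff}})^\dagger_0$. The universal property of right Kan extensions then gives
\[\mathbf{DepSet^S_{aff}} \simeq \mathrm{Nat}\big(S_0, \mathbf{DepSet}_0\big),\]
where $S_0$ and $\mathbf{DepSet}_0$ denote the restricted pseudo-functors. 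Using \cref{inert_duality}, both are defined on $(\mathbf{Dep_{aff}^{elm}})^{op}$, the opposite of the category of finite posets with an initial object and cosieve inclusions.

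\textbf{Step 2: use representability of the target.} By the remark following \cref{depset_sig}, the target sends $I$ to $\mathbf{Set}^{I}$, acting by restriction along cosieve inclusions. Now $\mathbf{Set}^{I} \simeq \mathrm{Hom}(I, \mathbf{Set})$, and $I = \mathrm{colim}$ of its elementary diagram, by \cref{elm_diag_depaff}. The aim is to write the target as a "Yoneda-type" pseudo-functor. Form the category of elements $\int S_0 = S^\otimes_0$. A natural transformation $S_0 \Rightarrow \mathbf{Set}^{(-)}$ assigns to each $x \in S_0(I)$ a functor $I \to \mathbf{Set}$, naturally in $x$ and compatibly with restriction. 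Since $I$ has an initial object $\bot$ and cosieves of $I$ correspond to objects $i$ via $I_{i/}$, the functor on $I$ is determined by its values on the restrictions $x|_{I_{i/}}$.

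\textbf{Step 3: build the site.} Define $C$ as the category whose objects are the objects $x$ of $S^\otimes_0$, with $x$ lying over an elementary $I$. There should be two kinds of generating morphisms:
\begin{itemize}
\item morphisms of the fiber $S_0(I)$;
\item cocartesian restriction edges $x \to x|_{I_{i/}}$.
\end{itemize}
In other words, $C$ is essentially $(S^\otimes_0)^{op}$, possibly after discarding non-cocartesian information. Define the comparison functor
\[\mathrm{Nat}(S_0, \mathbf{Set}^{(-)}) \to \mathbf{Set}^{C^{op}}, \qquad \alpha \mapsto \big(x \mapsto \alpha(x)(\bot)\big).\]
The restriction edges act as the maps $\alpha(x)(\bot) \to \alpha(x)(i) = \alpha(x|_{I_{i/}})(\bot)$. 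The variance has to be fixed carefully, matching the $\mathbf{Set}^{I^{op}}$ convention of the remark.

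Conversely, from a presheaf $P$ on $C$ one reconstructs $\alpha(x)$ by $i \mapsto P(x|_{I_{i/}})$. Functoriality in $i$ comes from the factorization $x|_{I_{i/}} \to x|_{I_{j/}}$ for $i \le j$, which holds because restrictions compose. Naturality in cosieve inclusions holds because $I_{i/}$ inside a cosieve $J \subset I$ is the same poset. Checking that the two constructions are mutually inverse is then routine.

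\textbf{Main obstacle.} The main difficulty is the coherence bookkeeping. The fibration is only a pseudo-functor, so the $\alpha(x)$ must be shown to assemble into honest functors, and the (pseudo)naturality squares must be shown to be forced by the values at initial objects. The key fact to use is that the evaluation $\mathbf{Set}^{I} \to \prod_i \mathbf{Set}$ at the initial objects of the cosieves $I_{i/}$ is jointly faithful and compatible with restriction. I would first try to carry out the argument strictly, replacing $S_0$ by an equivalent split opfibration, and only then read off $C$ as its category of elements.
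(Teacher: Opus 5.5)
Your argument is correct, but it takes a genuinely different route from the paper's.

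\textbf{The paper's route.} It works over all of $\Lambda(\mathbf{Dep_{aff}})^\dagger$. It rewrites $\mathrm{Hom}(S_t,D)$ as global sections of the exponential $D^{S_t}$, and computes $D^{S_t}(I)$ by the end formula as $\mathbf{Set}^{C_I}$, with $C_I=\int^{J}\mathrm{Hom}(I,J)\times J\times S_t(J)$. It then reads off $\mathbf{DepSet^S_{aff}}$ as presheaves on a category assembled from the diagram $C_{(-)}$. This uses only cartesian closedness of functor categories and the fact that $C\mapsto\mathbf{Set}^C$ turns colimits into limits. The price is that the site is left implicit, and the ends and coends are manipulated strictly.

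\textbf{Your route.} You first cut down to the elementary part. For this step only the target needs to be the right Kan extension of its restriction, which is essentially \cref{elm_diag_depaff}. You then exploit two features specific to $\mathbf{Dep_{aff}}$: every elementary poset has an initial object, and every elementary cosieve of $I$ is of the form $I_{i/}$. This gives an explicit site: $\mathbf{DepSet^S_{aff}}\simeq\mathbf{Set}^{S^\otimes_0}$ with the $\mathbf{Set}^{I}$ convention of the definition, or the opposite with the $\mathbf{Set}^{I^{op}}$ convention of the remark. It immediately recovers the graded-sets example, since there $S^\otimes_0\cong\mathbb{2}$.

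\textbf{One point to tighten.} Do not discard anything: take $C=(S^\otimes_0)^{op}$ on the nose. Besides fiber maps and restrictions along the canonical inclusions $I_{i/}\subseteq I$, it must contain the cocartesian isomorphisms lying over isomorphisms and automorphisms of elementary posets. An example is the swap of the two maximal elements of the poset consisting of a bottom element below two incomparable elements. Pseudo-naturality of $\alpha$ along such a $\sigma$ gives isomorphisms $\alpha(\sigma_*x)(\bot)\cong\alpha(x)(\bot)$. A presheaf on a site generated only by your two listed kinds of maps would not record them.

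With that fixed, your comparison functors are well defined and mutually quasi-inverse. Your version also makes this symmetry bookkeeping explicit, which the paper's strict computation passes over.
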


\begin{proof}
 Consider a split fibration equivalent to $S$, and the corresponding functor $S_t : \Lambda(\mathbf{\mathbf{Dep_{aff}}})^\dagger \to \mathbf{Cat}$. Write $D:  \Lambda(\mathbf{\mathbf{Dep_{aff}}})^\dagger \to \mathbf{CAT}$ for the functor corresponding to $\mathbf{DepSet_{aff}} \to \Lambda(\mathbf{\mathbf{Dep_{aff}}})^\dagger$.
 
 Then, $\mathbf{DepSet^S_{aff}}$ is equivalent to the hom-category $$Hom_{\mathbf{CAT}^{\Lambda(\mathbf{\mathbf{Dep_{aff}}})^\dagger}}(S_t,D) \simeq Hom_{\mathbf{CAT}^{\Lambda(\mathbf{\mathbf{Dep_{aff}}})^\dagger}}(*,D^{S_t})$$
 
By the general formula for exponentials in a functor category with values in a cartesian closed category, we see that, for $I \in \Lambda(\mathbf{\mathbf{Dep_{aff}}})^\dagger$, the following equation holds:

$$D^{S_t}(I) \simeq \int_{J \in \Lambda(\mathbf{\mathbf{Dep_{aff}}})^\dagger} \Pi_{Hom(I,J)} D(J)^{S_t(J)} \simeq \int_{J \in \Lambda(\mathbf{\mathbf{Dep_{aff}}})^\dagger} \Pi_{Hom(I,J)} \mathbf{Set}^{J \times S_t(J)}$$

Equivalently, $D^{S_t}(I)$ is the image under the continuous mapping $C \mapsto Set^C$ of the coend

$$C_I := \int^{J \in \Lambda(\mathbf{\mathbf{Dep_{aff}}})^\dagger} Hom(I,J) \times J \times S_t(J)$$ 

Similarly, for $I \to J$ a map in $\Lambda(\mathbf{\mathbf{Dep_{aff}}})^\dagger$, the corresponding functor $D^{S_t}(I) \to D^{S_t}(J)$ is induced by precomposition (with the functor between the corresponding colimits). This yields a diagram $C_{(-)} : \Lambda(\mathbf{\mathbf{Dep_{aff}}})^\dagger \to \mathbf{Cat}$.

Finally, from the equivalence,
$$\mathbf{DepSet^S_{aff}} \simeq  \underline{Hom}_{\mathbf{CAT}}(*,\varprojlim D^{S_t}) \simeq \varprojlim D^{S_t}$$
we deduce that $\mathbf{DepSet^S_{aff}}$ is equivalent to the presheaf topos on the category that is the limit of the diagram $C_{(-)}$.
\end{proof}

\begin{example}
 The (site of the) presheaf topos corresponding to the dependent signature $O_\mathbf{GMon}$ associated to graded monoids can be recovered from the diagram $$C^\mathbf{GMon} : \Lambda(\mathbf{\mathbf{Dep_{aff}}})^\dagger \to \mathbf{Cat}$$ obtained by right Kan extension along $\Lambda(\mathbf{\mathbf{Dep_{aff}}})^\dagger_0 \to \Lambda(\mathbf{\mathbf{Dep_{aff}}})^\dagger$ from a diagram $$C_0^\mathbf{GMon} : \Lambda(\mathbf{\mathbf{Dep_{aff}}})^\dagger_0 \to \mathbf{Cat}$$ takes only two values different from the empty category $\emptyset$, namely:
 \begin{itemize}
  \item $C_0^\mathbf{GMon}(\mathbb{1}) = \int^{J \in \Lambda(\mathbf{\mathbf{Dep_{aff}}})^\dagger} Hom(\mathbb{1},J) \times J \times S_t(J)$. Computing the following coequalizer,

\[\begin{tikzcd}[ampersand replacement=\&]
	\begin{array}{c}  (Hom(\mathbb{1},\mathbb{1}) \times Hom(\mathbb{1},\mathbb{1}) \times  \mathbb{1})   \amalg (Hom(\mathbb{2},\mathbb{2}) \times Hom(\mathbb{1},\mathbb{2}) \times \mathbb{2})\\ \amalg (Hom(\mathbb{1},\mathbb{2}) \times Hom(\mathbb{1},\mathbb{1}) \times \mathbb{2}) \amalg (Hom(\mathbb{2},\mathbb{1}) \times Hom(\mathbb{1},\mathbb{2}) \times \mathbb{1})\\\simeq \mathbb{1} \amalg \mathbb{2} \end{array} \\
	\\
	{\mathbb{1} \amalg \mathbb{2}} \\
	{\mathbb{1} \amalg \mathbb{2}}
	\arrow["id", shift left, from=1-1, to=3-1]
	\arrow["id"', shift right, from=1-1, to=3-1]
	\arrow[from=3-1, to=4-1]
\end{tikzcd}\]
this reduces to $\mathbb{1} \amalg \mathbb{2}$.
\item $C_0^\mathbf{GMon}(\mathbb{2}) = \int^{J \in \Lambda(\mathbf{\mathbf{Dep_{aff}}})^\dagger} Hom(\mathbb{2},J) \times J \times S_t(J)$. Computing the following coequalizer,
\[\begin{tikzcd}[ampersand replacement=\&]
	\begin{array}{c}  (Hom(\mathbb{1},\mathbb{1}) \times Hom(\mathbb{2},\mathbb{1}) \times  \mathbb{1})   \amalg (Hom(\mathbb{2},\mathbb{2}) \times Hom(\mathbb{2},\mathbb{2}) \times \mathbb{2})\\ \amalg (Hom(\mathbb{1},\mathbb{2}) \times Hom(\mathbb{2},\mathbb{1}) \times \mathbb{2}) \amalg (Hom(\mathbb{2},\mathbb{1}) \times Hom(\mathbb{2},\mathbb{2}) \times \mathbb{1})\\\simeq \mathbb{1} \amalg \mathbb{2} \amalg \mathbb{1} \end{array} \\
	\\
	{\mathbb{1} \amalg \mathbb{2}} \\
	{\mathbb{2}}
	\arrow["{id \amalg id_{\mathbb{1}}}", shift left, from=1-1, to=3-1]
	\arrow["{id \amalg t}"', shift right, from=1-1, to=3-1]
	\arrow[from=3-1, to=4-1]
\end{tikzcd}\]
this reduces to $\mathbb{2}$.
 \end{itemize}

 Therefore, the site is the limit of the diagram given by the inclusion: $$\mathbb{2} \subset \mathbb{1} \amalg \mathbb{2}$$ 
 
 That is, it is the walking arrow category $\mathbb{2}$.
 In other words, the category of dependent sets over the corresponding signature is the presheaf category $\mathbf{Set}^{\mathbb{2}}$ of "graded sets".
\end{example}

Note that, for every dependent operad $O$, the functor $U_{\mathbf{Dep_{aff}}} : \mathbf{Op} \to \mathbf{Sig}$ induces a functor $V^O : \mathbf{Alg}_{O} \to \mathbf{DepSet^S_{aff}}$, where $S := U_{\mathbf{Dep_{aff}}} O$. In our framework, giving that the forgetful functor $V^O$ acts by "precomposition", free algebras are constructed, in a sense, by left Kan extension:

\begin{definition}[{\cite[Definition 2.4.1]{harpaz2015grothendieck}}]
\label{relative_cocomplete}
 Let I be a small category and consider a commutative diagram of categories of the form

\[\begin{tikzcd}[ampersand replacement=\&]
	J \&\& {\mathcal{D}} \\
	\\
	{J^\triangleright} \&\& {\mathcal{C}}
	\arrow["\delta", from=1-1, to=1-3]
	\arrow[from=1-1, to=3-1]
	\arrow["\pi", from=1-3, to=3-3]
	\arrow["\epsilon"', from=3-1, to=3-3]
\end{tikzcd}\]
A \textbf{colimit of $\delta$ relative to $\epsilon$ and $\pi$} is a dashed lift
\[\begin{tikzcd}[ampersand replacement=\&]
	J \&\& {\mathcal{D}} \\
	\\
	{J^\triangleright} \&\& {\mathcal{C}}
	\arrow["\delta", from=1-1, to=1-3]
	\arrow[from=1-1, to=3-1]
	\arrow["\pi", from=1-3, to=3-3]
	\arrow[dashed, from=3-1, to=1-3]
	\arrow["\epsilon"', from=3-1, to=3-3]
\end{tikzcd}\]
which is initial in the category of all such lifts. We will say that $\pi : \mathcal{D} \to \mathcal{C}$ is cocomplete if for every square such as above, $\delta$ admits a colimit relative to $\epsilon$ and $\pi$. 
\end{definition}

\begin{proposition}
\label{free_algebras}
 $V^O$ admits a left adjoint $F^O :\mathbf{DepSet^S_{aff}} \to \mathbf{Alg}_{O}$, where $S := U_{\mathbf{Dep_{aff}}} O$.
\end{proposition}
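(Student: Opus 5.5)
The plan is to build $F^O$ pointwise as a left Kan extension relative to the projection $\pi : \mathbf{DepSet_{aff}} \to \Lambda(\mathbf{Dep_{aff}})$, using \cref{relative_cocomplete}. Write $j : S^\otimes \to O^\otimes$ for the inclusion of the signature, so that $V^O$ is precomposition with $j$ (restricted to operad morphisms). Given a dependent set $\sigma : S^\otimes \to \mathbf{DepSet_{aff}}^*$ over $S$, we want a morphism of dependent operads $F^O\sigma : O^\otimes \to \mathbf{DepSet_{aff}}$ over $\Lambda(\mathbf{Dep_{aff}})$ that is initial among those equipped with a map $\sigma \to (F^O\sigma)\circ j$.

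The first step is to show that $\pi$ is cocomplete in the relative sense for small diagrams. A lift of $\epsilon : J^\triangleright \to \Lambda(\mathbf{Dep_{aff}})$ with cone point over $K$ amounts to a compatible family of morphisms $(k_a, F_a) : \delta(a) \to (K,Z)$, where $k_a$ is prescribed by $\epsilon$. By the adjunction $\eta_K^* \dashv (\eta_K)_*$ built into the definition of composition, this is the same as a cocone in $\mathbf{Set}^K$ on the diagram $a \mapsto \eta_K^*(\mu_K)_*(T\epsilon_{a\to\infty})_*\ldots X_a$. This is a diagram of left Kan extensions and restrictions, which is functorial in $a$ by the associativity established in \cref{depset_cat}. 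Since $\mathbf{Set}^K$ is cocomplete, its colimit $Z$ gives the initial lift.

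Next we define $F^O\sigma$ on an object $o$ over $I$ as the relative colimit of the diagram $(j \downarrow o) \to S^\otimes \xrightarrow{\sigma} \mathbf{DepSet_{aff}}$, lying over $(j\downarrow o)^\triangleright \to \Lambda(\mathbf{Dep_{aff}})$. This is the usual pointwise relative left Kan extension, and the standard argument (as in \cite{harpaz2015grothendieck}) makes it a functor over $\Lambda(\mathbf{Dep_{aff}})$ that is left adjoint to restriction along $j$ among all functors over the base. Since $j$ is fully faithful on $S^\otimes$, and $S^\otimes$ is wide (it contains all objects), the unit $\sigma \to (F^O\sigma)\circ j$ will then be computed.

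The main obstacle is checking that $F^O\sigma$ is actually a morphism of dependent operads, i.e.\ that it sends cocartesian lifts of inert morphisms to cocartesian morphisms. Inert-cocartesian maps in $\mathbf{DepSet_{aff}}$ are of the form $X \mapsto k^{t*}X$ with identity component. So we must show that restriction along the cosieve $k^t : J \to I$ commutes with the colimits above. For this:
\begin{itemize}
\item Restriction along $k^t$ preserves colimits in presheaf categories.
\item The exact square from the proof that $\mathbf{DepSet_{aff}}$ is an operad identifies $\eta_J^* k_*$ with $k^{t*}$.
\item Cocartesian lifts of inerts in $O^\otimes$ induce a comparison $(j \downarrow o) \to (j\downarrow o')$ that is cofinal, using the inert/active structure of $\Lambda(\mathbf{Dep_{aff}})$ and \cref{inert_duality}.
\end{itemize}
The cofinality claim is what I expect to need the most care. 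I would prove it by factoring maps out of signature objects into the operad as inert-cocartesian maps followed by maps over active ones, in the spirit of Lurie's operadic Kan extensions.

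Once this is checked, operad morphisms $F^O\sigma \to \alpha$ correspond to maps $\sigma \to V^O\alpha$, because natural transformations between operad morphisms are unconstrained. This yields the adjunction $F^O \dashv V^O$.
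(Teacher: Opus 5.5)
You follow the paper's own construction: a pointwise relative left Kan extension along $j$ over the comma categories $j\downarrow o$, using relative cocompleteness of $\mathbf{DepSet_{aff}}\to\Lambda(\mathbf{Dep_{aff}})$, which the paper cites from Harpaz--Prasma and you reprove. You are right that the crux is whether $F^O\sigma$ preserves inert-cocartesian edges, a point the paper's proof never checks. \emph{But your cofinality claim is false, and the functor defined this way is in general not an algebra.}

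A morphism $(s,\theta)\to(s',\theta')$ in $j\downarrow o$ is an inert-lying $u$ with $\theta=\theta' u$. When all morphisms over inert maps are cocartesian, such a $u$ leaves the active part of $\theta$ unchanged. Hence $\pi_0(j\downarrow o)$ is the set of isomorphism classes of active maps into $o$, which by the Segal condition is a product over the elementary pieces of $o$. Concretely, take the operad of pointed sets: one sort $c$ over $\mathbb{1}$, one nullary operation $pt$ over $\emptyset\to\mathbb{1}$ (like $id_b$ in the graded-monoid example), and $\sigma$ given by a set $X$. At $c$ you correctly get $X\sqcup 1$. At $o=c\amalg c$ the active maps fall into four classes, $(\mathrm{id},\mathrm{id})$, $(\mathrm{id},pt)$, $(pt,\mathrm{id})$ and $(pt,pt)$. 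So the first component of the relative colimit in $\mathbf{Set}^{\mathbb{1}\amalg\mathbb{1}}$ is $X\sqcup X\sqcup 1\sqcup 1=(X\sqcup 1)\times 2$. The cocartesian projection $c\amalg c\to c$ is therefore sent to a fold map, not an isomorphism. Moreover $j\downarrow(c\amalg c)\to j\downarrow c$ is not even bijective on $\pi_0$ (four components against two), so it cannot be final. Projecting such a product onto one factor is final only when the other factors are weakly contractible.

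Restricting the colimit formula to elementary objects and extending by the third operad axiom also fails here; this is the analogue of Lurie imposing the colimit condition only over $\langle 1\rangle$. Keep only $id_b$ and $comp_b$ from the graded-monoid example, with no operation on $e$. The free algebra on $X_e\to X_b$ has base the free monoid $X_b^*$. However, the only active map into $e$ is $\mathrm{id}_e$, so the pointwise colimit at $e$ is $X_e\to X_b$, and the inert map $e\to b$ goes to the non-invertible inclusion $X_b\hookrightarrow X_b^*$.

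A direct repair would go as follows, for $x$ over an elementary $I$ with initial element $i_0$:
\begin{itemize}
\item Use the fact that an object of $\mathbf{Set}^I$ is its restriction to the cosieve $I\setminus\{i_0\}$ together with a set over the limit of that restriction.
\item Compute only the $i_0$-component as a colimit.
\item Take the restriction to $I\setminus\{i_0\}$ to be the value already constructed, inductively on $|I|$. This is a colimit in a slice of $\mathbf{Set}$ rather than in $\mathbf{Set}^I$.
\item Then check functoriality and the adjunction.
\end{itemize}
Alternatively, you can avoid explicit formulas. $\mathbf{Alg}_O$ is the category of models of a small limit sketch. Limits and filtered colimits of algebras are computed fiberwise, because $\eta_J^*k_*$ and $k_t^*$ are finite limits and so commute with filtered colimits in $\mathbf{Set}$. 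Since $V^O$ preserves them, it has a left adjoint by the adjoint functor theorem for locally presentable categories.

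Finally, $j$ is not fully faithful: it is bijective on objects but omits every morphism over a non-inert map. If it were full, the unit $\sigma\to V^OF^O\sigma$ would be invertible, which already fails for pointed sets.
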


\begin{proof}
Consider a dependent set $C : S^\otimes \to \mathbf{DepSet_{aff}}$.
 The opfibration  $\mathbf{DepSet_{aff}} \to \Lambda(\mathbf{\mathbf{Dep_{aff}}})$ is cocomplete in the sense of \cref{relative_cocomplete} by \cite[Proposition 2.4.4]{harpaz2015grothendieck} since the presheaf categories $\mathbf{Set}^{I}$ are all cocomplete, so we will be able to construct the left adjoint in a pointwise fashion (as with pointwise Kan extensions).
 Explicitly, consider an object $x \in O^\otimes$, and consider the comma category as follows:

\[\begin{tikzcd}[ampersand replacement=\&]
	{\iota \downarrow x} \&\& {S^\otimes} \\
	\\
	{*} \&\& {O^\otimes}
	\arrow[from=1-1, to=1-3]
	\arrow[from=1-1, to=3-1]
	\arrow[between={0.3}{0.7}, Rightarrow, from=1-3, to=3-1]
	\arrow["\iota", from=1-3, to=3-3]
	\arrow["x"', from=3-1, to=3-3]
\end{tikzcd}\]

Note that the diagram $\iota \downarrow x \to S^\otimes \to O^\otimes \to \Lambda(\mathbf{\mathbf{Dep_{aff}}})$ extends to a cocone with apex $\pi(x)$

We define a diagram $F^O(C) : O^\otimes \to \mathbf{DepSet_{aff}}$ over $\Lambda(\mathbf{\mathbf{Dep_{aff}}})$ on objects by mapping an $x$ such as above to the colimit associated to the diagram below,
\[\begin{tikzcd}[ampersand replacement=\&]
	{\iota \downarrow x} \& {S^\otimes} \& {\mathbf{DepSet_{aff}}} \\
	\\
	{(\iota \downarrow x)^\triangleright} \&\& {\Lambda(\mathbf{Dep_{aff}})}
	\arrow[from=1-1, to=1-2]
	\arrow[from=1-1, to=3-1]
	\arrow[from=1-2, to=1-3]
	\arrow[from=1-3, to=3-3]
	\arrow["h", dashed, from=3-1, to=1-3]
	\arrow[from=3-1, to=3-3]
\end{tikzcd}\]
i.e. to the apex of the cocone corresponding to the initial lift $h$.

This construction can be promoted to a functor $O^\otimes \to \mathbf{DepSet_{aff}}$ using the universal properties of these initial lifts, and the canonical inclusions $\iota \downarrow x \to \iota \downarrow y$, for $x \to y$ an arrow in $O^\otimes$. This defines the functor $F^O$.

If the dependent set $C : S^\otimes \to \mathbf{DepSet_{aff}}$ is the image of an algebra $A : O^\otimes \to \mathbf{DepSet_{aff}}$ through the forgetful functor $U_{\mathbf{Dep_{aff}}}$, then, there are canonical lifts as in the diagram above, therefore we have maps from the apex of the cocone associated to the initial lifts to the apex of the cocone corresponding to these canonical lifts. Those assemble into a natural transformation from the functor $O^\otimes \to \mathbf{DepSet_{aff}}$ constructed "freely" above from $C$ to the algebra $A$ we started with. Globally, this yields a counit $F^O \circ V^O \to id_{\mathbf{Alg}_{O}}$

Likewise, the object $x \in O^\otimes$ considered above is also an object of $S^\otimes$, and we get a mediating arrow from the apex $l$ of the initial cocone to the image of $x$ under $S^\otimes \to \mathbf{DepSet_{aff}}$ from the universal property of this cocone. This defines the component of a natural transformation $C \to  U_{\mathbf{Dep_{aff}}} \circ F^O C$. Finally, these yield the unit  $id_{\mathbf{DepSet^S_{aff}}} \to  V^O \circ F^O$.

The triangular identities follow similarly to the case of "real" pointwise left Kan extensions, concluding the proof of that $F^O$ is left adjoint to $V^O$.
\end{proof}

\begin{theorem}
\label{monadicity}
 $V^O$ is monadic.
\end{theorem}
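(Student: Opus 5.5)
We prove monadicity of $V^O : \mathbf{Alg}_{O} \to \mathbf{DepSet^S_{aff}}$ with Beck's monadicity theorem, in its "creates coequalizers of $V^O$-split pairs" form. The left adjoint is already provided by \cref{free_algebras}, so it remains to show two things: that $V^O$ is conservative, and that $\mathbf{Alg}_O$ has, and $V^O$ preserves, coequalizers of $V^O$-split parallel pairs. The guiding observation is that $V^O$ is restriction along the wide inclusion $\iota : S^\otimes \to O^\otimes$ (same objects, fewer morphisms), followed by the identification of \cref{depset_sig}. So, much as for forgetful functors of ordinary algebraic theories, everything reduces to objectwise computations in the fibers $\mathbf{Set}^I$.

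\textbf{Conservativity.} Let $\theta : \alpha \to \beta$ be a natural transformation between algebras. Since the algebras are functors over $\Lambda(\mathbf{Dep_{aff}})$, its components are vertical. Suppose $V^O\theta$ is invertible. Then each component $\theta_x$, for $x \in O^\otimes$, is an isomorphism in the fiber $\mathbf{Set}^{\pi(x)}$, because $S^\otimes$ and $O^\otimes$ have the same objects. The componentwise inverses are automatically natural with respect to all arrows of $O^\otimes$, since a natural transformation with invertible components has a natural inverse. Hence $\theta$ is invertible.

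\textbf{Split coequalizers.} Take algebras $\alpha, \beta$ and $f,g : \alpha \rightrightarrows \beta$ whose images under $V^O$ admit a split coequalizer $V^O\beta \to C$ in $\mathbf{DepSet^S_{aff}}$. For each object $x$ with $\pi(x)=I$, the diagram $\alpha(x) \rightrightarrows \beta(x) \to C(x)$ is a split, hence absolute, coequalizer in $\mathbf{Set}^I$.

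Now let $u : x \to y$ be an arrow of $O^\otimes$ over $k : I \to TJ$. By definition of $\mathbf{DepSet_{aff}}$, its image is a map $\eta_J^* k_* \beta(x) \to \beta(y)$. The functor $\eta_J^* k_*$ preserves absolute colimits, because every functor does. Therefore $\eta_J^* k_* \beta(x) \to \eta_J^*k_*C(x)$ is again a coequalizer, and we obtain a unique induced map $\eta_J^* k_* C(x) \to C(y)$. Functoriality of these induced maps, and compatibility with the composition law of \cref{depset_cat}, will follow from uniqueness of factorizations through coequalizers: the composition formula is built entirely from the functors $(Tk)_*$, $(\mu_K)_*$, $\eta^*$ and the adjunction isomorphisms, all of which preserve split coequalizers. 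This makes $C$ into a functor $\gamma : O^\otimes \to \mathbf{DepSet_{aff}}$ over $\Lambda(\mathbf{Dep_{aff}})$ that extends the given dependent set.

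It then remains to check three things. First, $\gamma$ is a morphism of dependent operads. By the proof that $\mathbf{DepSet_{aff}}$ is a dependent operad, this means the maps above inert arrows must be the identities $k_t^* C(x) \to C(y)$. This holds because on inert arrows $\gamma$ agrees with the dependent set $C$, and $C$ already preserves cocartesian arrows as a morphism of signatures. Second, $\beta \to \gamma$ is a natural transformation. Third, it is a coequalizer in $\mathbf{Alg}_O$, and $V^O$ preserves it. For the last point, any cocone $\beta \to \delta$ under the pair factors componentwise through $C$, and the factorization is natural by the same uniqueness argument.

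\textbf{Main obstacle.} The main difficulty is the bookkeeping in the split-coequalizer step. Morphisms of $\mathbf{DepSet_{aff}}$ are not plain maps in a single presheaf category; they are composites involving pushforwards along Kleisli maps. We therefore have to verify carefully that each structural functor appearing in the composition of \cref{depset_cat} sends the chosen split coequalizers to coequalizers, so that uniqueness of the induced maps yields functoriality. We handle this by working with the absolute-colimit formulation, which avoids any exactness assumption on the left Kan extensions $k_*$. Once this is settled, Beck's theorem applies and gives that $V^O$ is monadic.
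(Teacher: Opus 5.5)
Your proposal is correct and follows the paper's route: Beck's theorem, conservativity from the fact that $S^\otimes \subset O^\otimes$ is identity-on-objects, and extension of the split coequalizer $C_0$ to an algebra using that split coequalizers are absolute. The paper phrases the key step through the equalizer of hom-sets $Hom_{\mathbf{DepSet_{aff}}}(-,C_0(y))$, which over $k$ is $Hom_{\mathbf{Set}^J}(\eta_J^* k_*(-),C_0(y))$ and so amounts to your application of $\eta_J^*k_*$; you also make explicit some checks the paper leaves implicit, namely agreement with $C$ on $S^\otimes$ and preservation of inert cocartesian arrows.
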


\begin{proof}
 We will apply Beck's monadicity theorem, namely, we will check that $V^O$ is conservative and creates $V^O$-split coequalizers. That $V^O$ is conservative follows directly from the facts that the inclusion $S^\otimes \subset O^\otimes$ is identity-on-objects and that natural transformations are invertible precisely when they are pointwise invertible.
 
 To see that $V^O$ creates $V^O$-split coequalizer, we consider a parallel pair of morphisms in $\mathbf{Alg}_O$, which is mapped to a split coequalizer by $V$, as below:

\[\begin{tikzcd}[ampersand replacement=\&]
	A \&\& B \&\& \\
	\\
	V^OA \&\& V^OB \&\& {C_0}
	\arrow["g"', shift right=2, from=1-1, to=1-3]
	\arrow["f", shift left=2, from=1-1, to=1-3]
	\arrow["Vf", shift left=2, from=3-1, to=3-3]
	\arrow["Vg"', shift right=2, from=3-1, to=3-3]
	\arrow[from=3-3, to=3-1]
	\arrow[from=3-3, to=3-5]
	\arrow[shift right, curve={height=6pt}, from=3-5, to=3-3]
\end{tikzcd}\]

Now consider an arrow $x \to y$ in $O^\otimes$ over an morphism $k : I \to TJ$ in $\Lambda(\mathbf{\mathbf{Dep_{aff}}})$. We want to extend $C_0$ to an algebra, by defining the action of $x \to y$, i.e by constructing an element of $$Hom_\mathbf{DepSet_{aff}}(C_0(x),C_0(y)) \simeq Hom_{\mathbf{Set}^{J}}(\eta_J^* k_* C_0(x),C_0(y))$$
Since a split coequalizer is an absolute colimit, we have an equalizer diagram
\[\begin{tikzcd}[ampersand replacement=\&]
	{Hom_\mathbf{DepSet}(C_0(x),C_0(y))} \& {Hom_\mathbf{DepSet}(V^OB(x),C_0(y))} \& {Hom_\mathbf{DepSet}(V^OA(x),C_0(y))}
	\arrow[from=1-1, to=1-2]
	\arrow[shift right=2, from=1-2, to=1-3]
	\arrow[shift left=2, from=1-2, to=1-3]
\end{tikzcd}\]
so that the morphisms $V^O(A)(x) \to V^O(A)(y) \to C_0(y)$ and $V^O(B)(x) \to V^O(B)(y) \to C_0(y)$ stemming from the algebra structures on $A$ and $B$ induce together an arrow $C_0(x) \to C_0(y)$.
It is clear from the universal properties of these equalizers that the arrows so-defined assemble into a functor $O^\otimes \to \mathbf{DepSet_{aff}}$ which extends $C_0 : S^\otimes \to \mathbf{DepSet_{aff}}$ into an algebra $C$ (with underlying dependent set $C_0$). Checking that this defines a coequalizer in the category of algebras can be done by using the universal properties of the equalizers above to extend the factorization of a cocone through $C_0$ in the category $\mathbf{DepSet^S_{aff}}$ to a factorization in $\mathbf{Alg}_O$.
 
 This shows that the assumptions for the monadicity theorem are satisfied, and concludes the proof.
\end{proof}

As a corollary, we record the following result:

\begin{corollary}
\label{algebras_loc_presentable}
 $\mathbf{Alg}_{O}$ is locally presentable.
\end{corollary}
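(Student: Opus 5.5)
The plan is to combine \cref{presheaf_topos} and \cref{monadicity} with the standard fact that a category of algebras for an accessible monad on a locally presentable category is locally presentable (Adámek--Rosický, Theorem 2.47 and Remark 2.78). By \cref{presheaf_topos}, $\mathbf{DepSet^S_{aff}}$ is equivalent to a presheaf category on a small category, namely the limit of the diagram $C_{(-)}$. It is therefore locally finitely presentable. By \cref{free_algebras} and \cref{monadicity}, $\mathbf{Alg}_O$ is equivalent to the Eilenberg--Moore category of the monad $V^O F^O$. So the only remaining point is that this monad is accessible, i.e.\ that $V^O$ preserves $\kappa$-filtered colimits for some regular cardinal $\kappa$; I expect filtered colimits to suffice.

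To prove accessibility I would show directly that $\mathbf{Alg}_O$ has filtered colimits and that $V^O$ creates them.
\begin{itemize}
\item Take a filtered diagram $(A_d)$ of algebras. On each object $x$ of $O^\otimes$ over $I$, form the colimit of the $A_d(x)$ in $\mathbf{Set}^I$. This is the colimit computed in $\mathbf{DepSet^S_{aff}}$, since limits and colimits in that presheaf category are pointwise.
\item For an arrow $x \to y$ over $k : I \to TJ$, the structure maps are elements of $Hom_{\mathbf{Set}^{J}}(\eta_J^* k_* A_d(x), A_d(y))$.
\item The key observation is that $k_*$ (left Kan extension) and $\eta_J^*$ (restriction) both preserve all colimits. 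Hence $\eta_J^* k_* \varinjlim A_d(x) \simeq \varinjlim \eta_J^* k_* A_d(x)$, and the structure maps pass to the colimit.
\item Functoriality in $O^\otimes$ follows from the coherence isomorphisms used to define composition in $\mathbf{DepSet_{aff}}$, which are natural.
\item Preservation of cocartesian lifts of inert morphisms also passes to the colimit. Such lifts correspond to identities $k_t^* X \to k_t^* X$, and $k_t^*$ commutes with colimits.
\end{itemize}
This produces an algebra that is the colimit in $\mathbf{Alg}_O$, preserved by $V^O$. In fact it shows $V^O$ preserves all colimits "of underlying data", in the same style as the construction of coequalizers in the proof of \cref{monadicity}.

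Alternatively, I would realize $\mathbf{Alg}_O$ as a full reflective subcategory of a presheaf-like category, given by functors $O^\otimes \to \mathbf{DepSet_{aff}}$ over $\Lambda(\mathbf{Dep_{aff}})$, cut out by limit/colimit sketch conditions. This route seems heavier, so I prefer the monadic argument.

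The main obstacle is the coherence bookkeeping in the middle step: checking that the canonical isomorphisms $\eta_J^*k_*\varinjlim \simeq \varinjlim \eta_J^*k_*$ are compatible with the composite formula defining composition in $\mathbf{DepSet_{aff}}$, which involves $\mu_K$ and $T k_1$. I would handle this uniformly: all functors appearing there are left adjoints (left Kan extensions) or restrictions along functors, hence cocontinuous, and the comparison isomorphisms are mates, hence natural. Once accessibility is established, local presentability of $\mathbf{Alg}_O$ follows immediately from the cited theorem.
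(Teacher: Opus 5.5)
Your overall plan is the right one, and it is more careful than the paper. The paper records the statement as an immediate consequence of \cref{monadicity} with no further argument. But monadicity over a presheaf topos does not by itself give local presentability: compact Hausdorff spaces are monadic over $\mathbf{Set}$ but not locally presentable. So an accessibility check on $V^O F^O$ is genuinely needed.

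However, your justification of that check is wrong. In the paper $k_*$ is the \emph{right} Kan extension, not the left one. The first step of the composition formula, $\mathrm{Hom}_{\mathbf{Set}^J}(\eta_J^*(k_0)_*X,Y)\simeq \mathrm{Hom}_{\mathbf{Set}^{TJ}}((k_0)_*X,(\eta_J)_*Y)$, uses $\eta_J^*\dashv(\eta_J)_*$. The examples also force this reading:
\begin{itemize}
\item for $comp_b$ over $\mathbb{1}\amalg\mathbb{1}\to\mathbb{1}$ the structure map is $\eta^*k_*(B,B)\simeq B\times B\to B$;
\item for $id_b$ over $\emptyset\to\mathbb{1}$ it is $1\to B$.
\end{itemize}
Left Kan extensions would instead give $B\amalg B\to B$ and $\emptyset\to B$. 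The same applies to $(\eta_J)_*$, $(Tk_1)_*$ and $(\mu_K)_*$ in your coherence step. These functors are not cocontinuous. Consequently your remark that the construction shows $V^O$ preserves all colimits of underlying data is false. For instance, the base component of a graded-monoid algebra is a monoid, and coproducts of monoids are free products, not disjoint unions. A telling sign is that your argument as written never uses filteredness.

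The missing idea is finiteness:
\begin{itemize}
\item All objects of $\Lambda(\mathbf{Dep_{aff}})$ are finite posets and $T$ preserves finiteness. So for $k\colon I\to TJ$ one has $(k_*X)(c)=\lim_{c\downarrow k}X$ with $c\downarrow k$ a finite poset.
\item Finite limits commute with filtered colimits in $\mathbf{Set}$. Hence every right Kan extension occurring in the definition and composition of $\mathbf{DepSet_{aff}}$ preserves filtered colimits; restrictions preserve all colimits.
\item Therefore the canonical comparison $\varinjlim_d\eta_J^*k_*A_d(x)\to\eta_J^*k_*\varinjlim_d A_d(x)$ is invertible for filtered diagrams. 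This is exactly what you need to define the structure maps on the pointwise colimit $L$.
\item Functoriality follows because two maps out of $\eta_K^*(\mu_K)_*(Tk_1)_*(k_0)_*L(x)$ can be compared using the filtered-colimit presentation of that object. The universal property in $\mathbf{Alg}_O$ is checked the same way.
\end{itemize}
Your treatment of inert lifts is fine as it stands, since there $\eta_J^*k_*\simeq k_t^*$ is a restriction.

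With this repair, $V^O$ creates filtered colimits and the monad is finitary. Hence $\mathbf{Alg}_O$ is locally presentable, indeed locally finitely presentable.
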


\section{Dependent operads: the general case}

In this section, we walk away from the usual notion of operads in order to allow variable duplication. In particular, the theory of operads in \cite{barwick2018operator} and the notion of inert/active orthogonal factorization system in a moment category, as introduced by Berger \cite{berger2021moment}, cannot be applied directly. Nevertheless, the theory can be adapted in the specific case of finite posets, but we do not pursue a general notion of dependent operator category in this setting.

We define $\mathbf{Dep} := \mathbf{FinPos}$.
We consider the assignment $$P \mapsto M(P) := (\mathbb{2}^{P^\triangleleft})^{op}$$ for $P$ a finite poset, where we think of $\mathbb{2}$ as the full subcategory of $\mathbf{Set}$ spanned by the empty set and a one-element set. We extend it to a covariant functor $\mathbf{FinPos} \to \mathbf{FinPos}$ by defining the action on an arrow $P \to Q$ in $\mathbf{FinPos}$ as in the diagram below
\[\begin{tikzcd}[ampersand replacement=\&]
	{M(P) =  (\mathbb{2}^{P^\triangleleft})^{op}} \&\& {M(Q) =  (\mathbb{2}^{Q^\triangleleft})^{op}} \\
	\\
	{(\mathbf{Set}^{P^\triangleleft})^{op}} \&\& {(\mathbf{Set}^{Q^\triangleleft})^{op}}
	\arrow["{M(f)}", dashed, from=1-1, to=1-3]
	\arrow[hook, from=1-1, to=3-1]
	\arrow["{\mathbf{Lan}^{op}}"', from=3-1, to=3-3]
	\arrow["{(?^{Q^\triangleleft})^{op}}"', from=3-3, to=1-3]
\end{tikzcd}\]
where $? : \mathbf{Set} \to \mathbb{2}$ is the functor mapping the empty set to $0$ and the other sets to $1$.

\begin{remark}
 Intuitively, variable duplication is modeled by the fact that, for a map $I \to M(J)$ in $\mathbf{Dep}$, a single object of $I$ can be used to "define" multiple objects of $J$ (namely a "product" of corepresentables). By functoriality, if $i \in I$ "defines" some $j \in I$, it automatically defines its dependencies $j \to j'$. In particular, it is not necessary to add explicitly a condition on the map $I \to M(J)$ to ensure that dependencies are carried over, this is why we work with all maps between finite posets. The initial object of $M(J)$, corresponding to the corepresentable functor on the cone point of $J^\triangleleft$ is used to "throw away" an object $i \in I$. As expected, throwing away $i \in I$ implies throwing away all objects that depend on it.
\end{remark}

\begin{proposition}
 $M$ is canonically endowed with the structure of a monad.
\end{proposition}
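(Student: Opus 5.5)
The plan is to make $M$ concrete and then check the monad laws by hand. A functor $P^\triangleleft \to \mathbb{2}$ is the same as a cosieve (upward closed subset) of $P^\triangleleft$. So $M(P)$ is the poset of cosieves of $P^\triangleleft$ ordered by \emph{reverse} inclusion. Since a cosieve contains the cone point $\bot$ only if it is all of $P^\triangleleft$, these cosieves are exactly the cosieves of $P$ together with the whole of $P^\triangleleft$. The latter is the corepresentable on $\bot$ and is the initial object of $M(P)$. Under this description, the action $M(f)$ for $f : P \to Q$ is computed as follows. Since $\mathbf{Lan}_{f^\triangleleft}\chi_S(q)$ is nonempty iff there is $p \in S$ with $f^\triangleleft(p) \le q$, $M(f)(S)$ is the upward closure of $f^\triangleleft(S)$ in $Q^\triangleleft$. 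Functoriality of $M$ is then immediate, because upward closure of images composes. I will first record this reformulation as a lemma, since every later verification reduces to it.

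Next I define the structure maps. The unit $\eta_P : P \to M(P)$ sends $p$ to the principal cosieve $(P^\triangleleft)_{p/}$, i.e. the corepresentable at $p$. It is monotone because $p \le q$ gives $(P^\triangleleft)_{q/} \subseteq (P^\triangleleft)_{p/}$, which is an arrow $\eta(p) \to \eta(q)$ in the reversed order. Naturality of $\eta$ holds because the upward closure of $f(p)$ is the principal cosieve on $f(p)$. The multiplication $\mu_P : M(M(P)) \to M(P)$ sends a cosieve $\mathcal{S}$ of $M(P)^\triangleleft$ to the union $\bigcup_{S \in \mathcal{S} \cap M(P)} S$, which is a cosieve of $P^\triangleleft$. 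If $\mathcal{S}$ contains the cone point of $M(P)^\triangleleft$, it contains everything, including the initial object $P^\triangleleft$ of $M(P)$, so its image is the top cosieve; hence no separate treatment of the cone point is needed. Monotonicity of $\mu_P$ follows because $\mathcal{S} \subseteq \mathcal{S}'$ implies the unions are included. More conceptually, $\mu$ is the left Kan extension along the "evaluation" pairing $P^\triangleleft \times M(P) \to \mathbb{2}$ followed by $?$, which is what makes it canonical. I will use the explicit union formula for computations.

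Then I verify naturality of $\mu$ and the three monad laws. For naturality of $\mu$, one needs that the union of upward closures of images equals the upward closure of the image of the union. This holds since both images and upward closures commute with unions. For the unit laws, $\mu \circ \eta_{M(P)}(S)$ is the union of the cosieves lying in the principal cosieve on $S$ in $M(P)^\triangleleft$. Those cosieves are the ones containing $S$ in the reversed order $\supseteq$, so the union is $S$ itself. For $\mu \circ M(\eta_P)(S)$, $M(\eta_P)(S)$ is the upward closure of $\{\eta(p) : p \in S\}$, so the union of its members is $\bigcup_{p \in S} (P^\triangleleft)_{p/} = S$, using that $S$ is upward closed. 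Associativity reduces to associativity of unions: both $\mu \circ \mu_{M}$ and $\mu \circ M(\mu)$ send an element of $M^3(P)$ to the union of all $S$ that appear in some member of the given cosieve.

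The main obstacle I expect is the bookkeeping of variance and cone points. There are three layers to track: the $(-)^{op}$ reversing the order, the added cone point at each level of $M^2$ and $M^3$, and the passage through $\mathbf{Set}$ and back via $?$. An error in any of these could turn "upward closure" into "downward closure" or break monotonicity of $\mu$. To control this, I will first prove carefully the lemma identifying $M(f)$ with upward closure of images. I will then check each law on generators: principal cosieves, and the top element coming from the cone point. Every element of $M(P)$ is a union of principal ones and all maps involved preserve unions, so checking on generators will suffice.
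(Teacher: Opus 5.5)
Your proposal is correct, and it takes a genuinely different, hands-on route. The paper never makes $M(P)$ explicit. It takes $\eta$ to be the $\mathbb{2}$-valued covariant Yoneda embedding. It obtains naturality from $\mathbf{Lan}_F\circ\mathbf{y}\simeq\mathbf{y}\circ F$, made strict because the categories involved are posets. It defines $\mu$ as the Yoneda extension (right Kan extension along $\mathbf{y}$ in the opposite categories). It then deduces the unit law from invertibility of the universal 2-cell of a Kan extension along a fully faithful functor, and associativity from pseudo-functoriality of $F\mapsto (F_!)^{op}$.

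You instead identify $M(P)$ with cosieves of $P^\triangleleft$ under reverse inclusion, $M(f)$ with upward closure of images under $f^\triangleleft$, and $\mu$ with unions, and you verify everything elementwise. These give the same structure. Evaluating the paper's extension on $\chi_{\mathcal S}$ gives $\int^{S}\chi_{\mathcal S}(S)\times\chi_S$, which is nonempty at $x$ exactly when $x$ lies in some member of $\mathcal S$.

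What each route buys:
\begin{itemize}
\item The paper's argument is shorter and uses only formal properties of Yoneda and Kan extensions, so it does not depend on the combinatorics of cosieves. It is also what justifies the word \emph{canonically}.
\item Your argument makes the variance and cone-point bookkeeping explicit; the paper's diagram for $\mu$ writes $MM(P)$ without the cone point of $M(P)^\triangleleft$.
\item You check the second unit law $\mu\circ M(\eta_P)=\mathrm{id}$ explicitly, which the paper's 2-cell argument does not directly give.
\item You produce the concrete description in terms of \emph{components} that the paper's later remarks and lifting arguments use informally.
\end{itemize}

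Two small corrections.

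First, your closing claim that all maps involved preserve unions fails for $\eta_{M(P)}$. If $p,q\in P$ are incomparable, the cosieve $P_{p/}\cup P_{q/}$ belongs to $\eta_{M(P)}(P_{p/}\cup P_{q/})$. It belongs to neither $\eta_{M(P)}(P_{p/})$ nor $\eta_{M(P)}(P_{q/})$. So checking $\mu\circ\eta_{M(P)}=\mathrm{id}$ on generators would not suffice. This is harmless, because your direct elementwise argument for that law already works for every $S$.

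Second, calling $\mu$ a left Kan extension along the evaluation pairing is imprecise. It is the coend against that pairing, i.e. the cocontinuous extension of $S\mapsto\chi_S$, which is exactly the paper's Yoneda extension.
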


\begin{proof}
 The component of the unit at a finite poset $P$ is given by the composite $$P \to P^\triangleleft \to (\mathbb{2}^{P^\triangleleft})^{op}$$ where the second map is the $\mathbb{2}$-enriched covariant Yoneda embedding. Given a functor between small categories $F: C \to D$, the following square commutes "only" up to canonical natural isomorphism:

\[\begin{tikzcd}[ampersand replacement=\&]
	C \&\& D \\
	\\
	{\mathbf{Set}^{C^{op}}} \&\& {\mathbf{Set}^{D^{op}}}
	\arrow["F", from=1-1, to=1-3]
	\arrow[from=1-1, to=3-1]
	\arrow["\simeq"{description}, draw=none, from=1-1, to=3-3]
	\arrow[from=1-3, to=3-3]
	\arrow["{\mathbf{Lan}_F}"', from=3-1, to=3-3]
\end{tikzcd}\]
 But, in our case, since there are no non-trivial isomorphisms in $\mathbb{2}$, we get strictly commutative squares
\[\begin{tikzcd}[ampersand replacement=\&]
	P \&\& Q \\
	\\
	{M(P) = (\mathbb{2}^{P^\triangleleft})^{op}} \&\& {M(Q) = (\mathbb{2}^{Q^\triangleleft})^{op}}
	\arrow[from=1-1, to=1-3]
	\arrow[from=1-1, to=3-1]
	\arrow[from=1-3, to=3-3]
	\arrow["{M(f)}", from=3-1, to=3-3]
\end{tikzcd}\]
 which proves naturality.
 
 The multiplication is defined by Yoneda extension (i.e., by right Kan extension along the covariant Yoneda embedding):
\[\begin{tikzcd}[ampersand replacement=\&]
	\& {(\mathbb{2}^{P^\triangleleft})^{op}} \&\& {(\mathbf{Set}^{P^\triangleleft})^{op}} \& {(\mathbb{2}^{P^\triangleleft})^{op}} \\
	\\
	{(\mathbb{2}^{(\mathbb{2}^{P^\triangleleft})^{op}})^{op}} \& {(\mathbf{Set}^{(\mathbb{2}^{P^\triangleleft})^{op}})^{op}}
	\arrow[from=1-2, to=1-4]
	\arrow["{\mathbf{y}}"', from=1-2, to=3-2]
	\arrow["{(?^{P^\triangleleft})^{op}}"', from=1-4, to=1-5]
	\arrow[from=3-1, to=3-2]
	\arrow[dashed, from=3-2, to=1-4]
\end{tikzcd}\]
 
 Again, the following diagram involving right Kan extensions
\[\begin{tikzcd}[ampersand replacement=\&]
	{M(P)} \&\& {(\mathbf{Set}^{P^\triangleleft})^{op}} \&\& \\
	\&\& {M(Q)} \&\& {(\mathbf{Set}^{Q^\triangleleft})^{op}} \\
	{(\mathbf{Set}^{M(P)})^{op}} \\
	\&\& {(\mathbf{Set}^{M(Q)})^{op}}
	\arrow[from=1-1, to=1-3]
	\arrow["{M(f)}"{description}, from=1-1, to=2-3]
	\arrow["{\mathbf{y}}"', from=1-1, to=3-1]
	\arrow[from=1-3, to=2-5]
	\arrow[from=2-3, to=2-5]
	\arrow["{\mathbf{y}}", from=2-3, to=4-3]
	\arrow[dashed, from=3-1, to=1-3]
	\arrow["{(M(f)_!)^{op}}"', from=3-1, to=4-3]
	\arrow[dashed, from=4-3, to=2-5]
\end{tikzcd}\]
only commutes up to canonical natural isomorphisms, but this is enough to conclude for "strict" naturality in our case.

The unit law for the monad follows from the fact that the universal $2$-cell in a Kan extension along the Yoneda embedding is always invertible (as for any Kan extension along a fully faithful functor). Associativity of the multiplication boils down to pseudo-functoriality of the right Kan extension mapping $$F \mapsto (F_!)^{op}  : (\mathbf{Set}^{C})^{op} \to (\mathbf{Set}^{D})^{op}$$ and more specifically to the corresponding associativity up to natural isomorphisms. 
\end{proof}

We observe that the monad structure on $M$ restricts on the endofunctor mapping $P$ to $M(P) \setminus {\emptyset}$ (i.e. the full subcategory of $M(P)$ without the terminal object). Since this terminal object is meaningless for the intended use of $M$, we make the choice of discarding it, and we will still write $M$ for this second monad.

\begin{remark}
 We have a dense inclusion $\mathbf{Dep_{aff}^{elm}} \subset \mathbf{Dep}$. However, $M$ is not a monad with arities (as studied in \cite{berger2012monads}) relative to this inclusion. If we consider instead $\mathbf{Dep_{aff}^{elm}}'$ obtained from $\mathbf{Dep_{aff}^{elm}}$ by adding the empty poset as an initial object, the inclusion $\mathbf{Dep_{aff}^{elm}}' \subset \mathbf{Dep}$ is still dense and $M$ is a monad with arities with respect to it: it is not difficult to check that the associated nerve functor preserves the empty poset, coproduct of posets (since the posets in $\mathbf{Dep_{aff}^{elm}}$ have initial objects; in particular they are connected) and pushout of two full inclusions (which are easy to describe), and we believe that $M$ preserve any connected colimit.
 
 Note that, for other reasons, we have made the choice not to include the empty poset in the elementary objects of $\mathbf{Dep_aff}$. The monad $M$ is actually the composite of two monads through a distributive law: the monad adding an initial object, and the variable duplication monad defined as $M$ but without the $(-)^\triangleleft$ part. The second is already a monad with arities with respect to $\mathbf{Dep_{aff}^{elm}} \subset \mathbf{Dep}$.
\end{remark}

Write $\Lambda(\mathbf{Dep})$ for the Kleisli category associated to the monad $M$.

\begin{remark}
 We will think of objects $X$ of $M(P)$, for $P \in \mathbf{FinPos}$, as "products" of elements of $P$: for every element $p$ of $P$, either it is a "component" of $X$ or it is not. The unit of the monad sends a $p$ to the element $X_p \in M(P)$ whose unique "component" is $p$. The multiplication takes a $\chi \in MM(P)$ to $X_\chi \in M(P)$ whose components are $p$ are those that appear at least once in a component of a $\chi$.
\end{remark}

\begin{definition}
 We will say that a map $f : P \to M(Q)$ from $P$ to $Q$ is
  \begin{itemize}
   \item active if it is a free map, i.e. of the form $P \to Q \to M(Q)$ where the second arrow is the unit $\eta_Q : Q \to M(Q)$.
   \item inert if the projection $p_0$ in the diagram

\[\begin{tikzcd}[ampersand replacement=\&]
	{\eta_Q \downarrow f} \&\& Q \\
	\\
	P \&\& {M(Q)}
	\arrow["{p_0}", from=1-1, to=1-3]
	\arrow["{p_1}"', from=1-1, to=3-1]
	\arrow[between={0.3}{0.7}, Rightarrow, from=1-3, to=3-1]
	\arrow["{\eta_Q}", from=1-3, to=3-3]
	\arrow["f"', from=3-1, to=3-3]
\end{tikzcd}\]
   has a left adjoint right inverse.
\end{itemize}
\end{definition}

\begin{remark}
\label{lari}
 If $f : P \to M(Q)$ is inert, then, for every $q \in Q$, we can form the pullback on top in the diagram below,

\[\begin{tikzcd}[ampersand replacement=\&]
	{\eta_Q \circ q \downarrow f} \&\& {*} \\
	\\
	{\eta_Q \downarrow f} \&\& Q \\
	\\
	P \&\& {M(Q)}
	\arrow[from=1-1, to=1-3]
	\arrow[from=1-1, to=3-1]
	\arrow["\ulcorner"{anchor=center, pos=0.125}, draw=none, from=1-1, to=3-3]
	\arrow["q", from=1-3, to=3-3]
	\arrow["{p_0}", from=3-1, to=3-3]
	\arrow["{p_1}"', from=3-1, to=5-1]
	\arrow[between={0.3}{0.7}, Rightarrow, nfold, from=3-3, to=5-1]
	\arrow["{\eta_Q}", from=3-3, to=5-3]
	\arrow["f"', from=5-1, to=5-3]
\end{tikzcd}\]
and we get a left adjoint right inverse to the projection $\eta_Q \circ q \downarrow f \to *$, so that the domain has an initial object.

Conversely, if every such pullback admits an initial object, the corresponding adjunctions clearly assemble in a left adjoint right inverse for $p_0$.
\end{remark}

\begin{remark}
 It is unreasonable to expect these two classes of maps to form an orthogonal factorization system, as required by the definition of a moment category \cite{berger2021moment}. This is because we may duplicate an object an arbitrary number of times before using it. However, we still expect to get a weak factorization system as we establish below.
\end{remark}

\begin{lemma}
\label{inert_active_fac}
 Every map in $\Lambda(\mathbf{Dep})$ factors as an inert map followed by an active one.
\end{lemma}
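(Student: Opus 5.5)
The plan is to factor through the comma poset appearing in the definition of inertness. Given a map $f : P \to M(Q)$ in $\mathbf{Dep}$, i.e. a morphism $P \to Q$ of $\Lambda(\mathbf{Dep})$, set $R := \eta_Q \downarrow f$ with projections $p_0 : R \to Q$ and $p_1 : R \to P$. Since $P$ and $Q$ are finite posets and $M(Q)$ is a poset, $R$ is again a finite poset. Concretely, its objects are pairs $(q,p)$ such that $q$ is a "component" of $f(p)$ in the sense of the remark describing objects of $M(Q)$ as "products", and $(q,p) \le (q',p')$ iff $q \le q'$ and $p \le p'$. The active part of the factorization will be $\eta_Q \circ p_0 : R \to Q \to M(Q)$, which is active by definition.

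For the inert part I will define $g : P \to M(R)$ by sending $p$ to the "product" of all $(q,p) \in R$ with $q$ a component of $f(p)$. Formally, $g(p)$ is the element of $M(R) = (\mathbb{2}^{R^\triangleleft})^{op}$ obtained as the image under $?$ of the left Kan extension, along the inclusion of the fibre $p_1^{-1}(p)$, of the element $f(p)$ restricted to its components. An arrow $p \le p'$ induces $f(p) \to f(p')$ in $M(Q)$, and hence a map $g(p) \to g(p')$ in $M(R)$ sending the components of $g(p')$ back to those of $g(p)$ via the comma structure. Functoriality then follows because $M(R)$ is a poset, so only existence of the arrows needs checking. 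I must also verify that $g(p)$ is never the discarded terminal object. This holds since $f(p)$ is not, so the reduced monad $M$ is respected.

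Next I check that $g$ is inert using \cref{lari}: it suffices that for each $r = (q,p) \in R$ the comma $\eta_R \circ r \downarrow g$ has an initial object. Its objects are the $p' \in P$ such that $r$ is a component of $g(p')$, and by construction $p'=p$ itself (with the identity) is initial: any $p'$ with $r$ a component of $g(p')$ receives $p \le p'$ from the second coordinate of the comma. Finally, the Kleisli composite is $\mu_Q \circ M(\eta_Q p_0) \circ g$. By the description of $\mu$ as "taking all components appearing at least once", it sends $p$ to the product of the $q = p_0(q,p)$ over the components of $g(p)$, which is exactly $f(p)$. Strict equality rather than isomorphism is automatic because there are no nontrivial isomorphisms in $\mathbb{2}$, as observed in the proof that $M$ is a monad.

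The main obstacle is making the "components" description rigorous for $M(Q) = (\mathbb{2}^{Q^\triangleleft})^{op}$. Its elements are upward-closed subsets of $Q^\triangleleft$ (with the cone point allowed), not arbitrary subsets, so the "product of $(q,p)$" defining $g(p)$ has to be taken as a generated up-set in $R^\triangleleft$. The sticking point is showing that $\mu \circ M(\eta_Q p_0)$ recovers exactly $f(p)$ rather than a larger up-set. I would first try to prove $f(p) = \mathbf{Lan}$ of its minimal components, i.e. that the representables $\eta_Q(q)$ with $q$ in $f(p)$ generate $f(p)$. I would then verify that $p_0$ restricted to the fibre over $p$ maps generators onto generators, handling separately the case where $f(p)$ contains the cone point (the "thrown away" case), which should correspond to $g(p)$ containing the cone point of $R^\triangleleft$.
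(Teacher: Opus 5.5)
Your factorization is the paper's. You use the same comma poset $R$ and the same active part $\eta_Q\circ p_0$, and your $g$ is the closure of the fibre $p_1^{-1}(p)$. That is the paper's $g(p)=\{r\in R : p_1(r)\le p\}$ provided the closure is taken in the paper's direction. Your initial object $p_1(r)$ is the paper's left adjoint right inverse $r\mapsto (r,p_1(r))$.

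The gap is that your inertness step does not survive the formalization you announce at the end. In $M(Q)=(\mathbb{2}^{Q^\triangleleft})^{op}$, a Kleisli map satisfies $f(p')\subseteq f(p)$ (as up-sets) whenever $p\le p'$. So the up-set of $R$ generated by the fibre over $p$ is $p_1^{-1}(P_{\ge p})$, and $r\in g(p')$ if and only if $p'\le p_1(r)$. Thus $p_1(r)$ is terminal, not initial, among the $p'$ for which $r$ is a component of $g(p')$. Your sentence ``any $p'$ with $r$ a component of $g(p')$ receives $p\le p'$'' runs the wrong way.

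Concretely, take $P=\{a,b<c\}$ with $a,b$ incomparable, $Q=\{q\}$, and $f$ constant at $\eta_Q(q)$. Then $R\cong P$ and $g=\eta_P$ is the identity of $P$ in $\Lambda(\mathbf{Dep})$. For $r=c$ the set $\{a,b,c\}$ has no initial object. So if ``component'' means membership in an up-set, even identities fail the left-adjoint-right-inverse test. This mismatch is already latent in the paper's definition, but your argument has to pick a side.

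To repair it, fix one convention and check both the composite and the adjunction in it. You have two options.
\begin{enumerate}
\item Follow the paper. It writes $g$ as the transpose of $P\times R^{op}\to\mathbb{2}$, so $g(p)=p_1^{-1}(P_{\le p})$ is a down-closure. Then $r\in g(p')$ iff $p_1(r)\le p'$, and $p_1(r)$ is initial. The composite is $\bigcup_{p'\le p}f(p')=f(p)$, because in that convention Kleisli maps increase under inclusion.
\item Keep the literal up-set reading. Your $g$ then satisfies $M(p_0)\circ g=f$ exactly, using the unit law $\mu_Q\circ M(\eta_Q)=\mathrm{id}$ and $\bigcup_{p'\ge p}f(p')=f(p)$. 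This also settles your worry about getting a larger up-set. But inertness must then mean terminal objects in the fibres, i.e.\ a right adjoint right inverse to the projection.
\end{enumerate}

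Your concern about the throw-away case is justified, and the paper's proof overlooks it. $M(p_0)$ reaches the initial object of $M(Q)$ only from the initial object of $M(R)$. The paper's $g$ sends $(p,*)$ to $0$, so it never takes that value. You must exclude from $R$ the pairs $(q,p)$ with $f(p)$ initial, and set $g(p)$ to the initial object for those $p$. This is monotone because such $p$ form a down-set of $P$: if $p'\le p$ then $f(p')\le f(p)$.
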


\begin{proof}
 Consider a map $f : P \to M(Q)$, and form the following comma square
\[\begin{tikzcd}[ampersand replacement=\&]
	{R := \eta_Q \downarrow f} \&\& Q \\
	\\
	P \&\& {M(Q)}
	\arrow["{p_0}", from=1-1, to=1-3]
	\arrow["{p_1}"', from=1-1, to=3-1]
	\arrow[between={0.3}{0.7}, Rightarrow, from=1-3, to=3-1]
	\arrow["{\eta_Q}", from=1-3, to=3-3]
	\arrow["f"', from=3-1, to=3-3]
\end{tikzcd}\]
as in the definition above.

This provides an active map $h : R \to Q$.
We define a map $g : P \to M(R)$ as the transpose of the functor 
$$P \times (R^{op}) \to \mathbb{2}$$
that maps $(p,r) \in P \times R^{op}$ to $1$ if $p_1(r) \leq p$ and to $0$ otherwise, and that maps pairs of the form $(p,*)$ to $0$. This is clearly covariantly functorial in $P$ and contravariantly so in $R$. The composite $P \to M(R) \to M(Q)$ takes $p \in P$ to the  object of $M(Q)$ given by the "union" of the representables $\mathbf{y}(q)$ such $\mathbf{y}(q) \leq f(p)$, which is precisely $f(p)$, so that we have indeed defined a factorization of $f$.

Now, forming the comma square
\[\begin{tikzcd}[ampersand replacement=\&]
	{\eta_R \downarrow g} \&\& R \\
	\\
	P \&\& {M(R)}
	\arrow["{q_0}", from=1-1, to=1-3]
	\arrow["{q_1}"', from=1-1, to=3-1]
	\arrow[between={0.3}{0.7}, Rightarrow, from=1-3, to=3-1]
	\arrow["{\eta_R}", from=1-3, to=3-3]
	\arrow["g"', from=3-1, to=3-3]
\end{tikzcd}\]
we may define a functor $R \to \eta_R \downarrow g$ mapping $r \in R$ to the arrow $i_r : \mathbf{y}(r) \to g(p)$ in $M(Q)$ where $p := p_1(r)$. We then observe that an arrow $\alpha : \mathbf{y}(r') \to g(p')$ yields an object of $\eta_R \downarrow g$ such that $i_r \leq \alpha$ if and only if $r \leq r'$ in $R$. Indeed, the direct implication is evident, and, for the converse, if $r \leq r'$, then,  $p := p_1(r) \leq p_1(r')$, and, by definition of $g$, $p_1(r') \leq p'$ as the subposet $P_{\leq p_1(r')}$ of $P$ formed by the objects $q \leq p_1(r')$ is contained in $P_{\leq p'}$. This proves that $g$ is inert, concluding the proof.
\end{proof}

\begin{lemma}
\label{inert_llp}
 The inert maps are the maps having the left lifting property against the active maps. 
\end{lemma}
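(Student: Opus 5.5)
To prove \cref{inert_llp}, I will establish two inclusions. First, every inert map has the left lifting property against every active map. Second, any map with that lifting property is inert. For the second inclusion I plan the usual retract argument built on \cref{inert_active_fac}.

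\textbf{Inert maps lift against active maps.} Consider a commutative square in $\Lambda(\mathbf{Dep})$. The left side is an inert map $f : P \to M(Q)$. The right side is an active map $\eta_B \circ a : A \to M(B)$ with $a : A \to B$ in $\mathbf{FinPos}$. The top is $u : P \to M(A)$ and the bottom is $v : Q \to M(B)$. Commutativity says $M(a)\circ u = \mu_B \circ M(v)\circ f$ in Kleisli terms.

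By \cref{lari}, for each $q \in Q$ the fibre $\eta_Q \circ q \downarrow f$ has an initial object. Such an object is a "minimal" $p_q \in P$ with $\mathbf{y}(q) \leq f(p_q)$, and these choices are functorial in $q$ via the left adjoint right inverse $s : Q \to \eta_Q \downarrow f$. I define the candidate diagonal $d : Q \to M(A)$ by $d(q) := u(p_q)$, that is, $d = u \circ p_1 \circ s$.

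The lower triangle needs $M(a) \circ d = v$. The upper triangle needs $\mu_A \circ M(d) \circ f = u$. For the upper one, $u(p)$ should be recovered as the union over components $q$ of $f(p)$ of $u(p_q)$, using $p_q \leq p$ and monotonicity of $u$. Reading this through the description of $\mu$ as taking unions of components gives the required equality.

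This is where I expect the main obstacle. The lift does not obviously land correctly, because $u(p)$ may a priori contain components not accounted for by the $u(p_q)$. I would exploit commutativity of the square together with the fact that $a$ is a genuine functor, so $M(a)$ sends components to single components. This should show that the components of $u(p)$ are controlled by those of $v(q)$ for $q$ in $f(p)$. Possibly the argument needs the minimal $p_q$ to be chosen so that $u(p_q)$ covers exactly the part of $u(p)$ lying over $v(q)$. If pointwise uniqueness fails, I would instead define $d(q)$ as the meet in $M(A)$ of $u(p)$ over $p$ with $\mathbf{y}(q) \leq f(p)$, which, since this set has the least element $p_q$, is just $u(p_q)$. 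The lower triangle then follows by evaluating the commutativity relation at $p_q$ and using initiality.

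\textbf{Maps with the lifting property are inert.} Let $f : P \to M(Q)$ have the left lifting property against active maps. Factor it by \cref{inert_active_fac} as $f = h \circ g$, with $g : P \to M(R)$ inert and $h$ active. Lifting in the square with top $g$, left side $f$, right side $h$ and bottom the identity produces $r : Q \to M(R)$ with $h \circ r = \mathrm{id}$ and $r \circ f = g$. This exhibits $f$ as a retract of $g$ in the arrow category.

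Finally I will check that inert maps are closed under retracts. Using \cref{lari}, it suffices to show that the fibres $\eta_Q \circ q \downarrow f$ have initial objects. A retraction of $f$ off $g$ induces maps between these fibres and those of $g$ composing to the identity, and retracts of posets with an initial object have an initial object: the image of the initial one is initial. This concludes the plan.
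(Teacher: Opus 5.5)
\textbf{The forward half has a genuine gap.} Your diagonal $d(q):=u(p_q)$ is not a lift once $p_q$ is duplicated. The step that breaks is the lower triangle, which you treated as routine. Evaluating commutativity at $p_q$ only gives that $M(a)(u(p_q))$ is the ``product'' of the $v(q')$ over \emph{all} components $q'$ of $f(p_q)$, i.e.\ over all duplicates of $p_q$. This is in general strictly bigger than $v(q)$.

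Concretely, take $P=\mathbb{1}$, $Q=\{q_1,q_2\}$ discrete, and $f(*)=q_1\times q_2$, the inert duplication. Lift $f$ against the active map $\mathrm{id}_Q=\eta_Q$ in the square with top $u=f$ and bottom $v=\mathrm{id}_Q$. The only possible lift is $\mathrm{id}_Q$, but your formula gives $d(q_1)=d(q_2)=q_1\times q_2$.

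Since $p_q$ is unique (it is an initial object), there is nothing to choose there. Your fallback, the meet over the fibre, is the same map. The missing idea is the one the paper uses: cut $u(p_q)$ down. Let $d(q)$ be the ``product'' of only those components $x$ of $u(p_q)$ for which $a(x)$ is a component of $v(q)$. The lower triangle then follows from $a$ being an honest map together with commutativity at $p_q$. The upper triangle follows from commutativity at $p$, which ensures that each component of $u(p)$ is retained by at least one duplicate $q$ of $p$. You did note that $M(a)$ sends components to single components, but you applied this to the wrong triangle and never modified $d$.

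\textbf{The converse half is sound but takes a different route.} The paper does not use a retract argument there. For a non-inert $f$ it writes down explicit unliftable squares in the two cases allowed by \cref{lari}: some fibre $\eta_Q\circ q\downarrow f$ is empty, or it is non-empty without an initial object. Your argument goes through \cref{inert_active_fac} and retract-closure of inert maps, which is shorter. It is also the same pattern the paper itself uses for the dual statement, via \cref{retract_active}.

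You should state the retract step precisely, though. Write $h=\eta_Q\circ c$. Since $h\circ r=\mathrm{id}_Q$, some component $\rho$ of $r(q)$ satisfies $c(\rho)=q$. The identities $g=r\circ f$ and $f=h\circ g$ then show that the fibre of $f$ over $q$ and the fibre of $g$ over $\rho$ are the same subposet of $P$. Hence the fibre of $f$ over $q$ is non-empty with an initial object, and \cref{lari} gives that $f$ is inert.
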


\begin{proof}
 Consider a lifting problem in the Kleisli category of $M$, represented as follows,

\[\begin{tikzcd}[ampersand replacement=\&]
	P \&\& {M(R)} \\
	\\
	{M(Q)} \&\& {M^2(S)}
	\arrow[from=1-1, to=1-3]
	\arrow["f"', from=1-1, to=3-1]
	\arrow["{M(g)}", from=1-3, to=3-3]
	\arrow[from=3-1, to=3-3]
\end{tikzcd}\]
where $f$ is inert and $g$ is active.

Now consider an object $q \in Q$. By assumption on $f$, it is a "duplicate" of a unique object $p$ in $P$. Given an object $s \in S$ such that $s$ is a "component" of the image of $q$ under the bottom arrow, and since the diagram commutes, $p$ must be mapped by the top arrow to some object of $M(R)$ having some component $r$ mapped to $s$ by $g : R \to S$. We define a mapping $Q \to M(R)$ by sending every such $q$ to the "product" of those $r$ having this property. We claim that this provides a solution $h : Q \to M(R)$ to the lifting problem.

Indeed, the top triangle commutes because, given an object $p \in P$, then for every $r \in R$ that is a component of the image of $p$ under the top arrow, the subset of objects $q \in Q$ in $f(p)$ whose image under the bottom arrow contains $g(r) \in S$ must be non-empty (from the commutation of the square).

For commutation of the bottom triangle, we crucially rely on the fact that $g$ is a free map (we already wrote $g : R \to S$ for the underlying map) to see that the only $s \in S$ that are component of the image $I_q$ of a given $q \in Q$ under the bottom map are those in the image under $g$ of $h(q)$ (namely, if $s_0 \in S$ is in the image of the composite $g \circ h$, then, by definition of $h$, it must come from an $r_0$ whose image under $g$ is in $I_q$, so $s_0$ is in $I_q$).

For the converse, we consider a map $f : P \to M(Q)$ that is not inert. Making use of \cref{lari}, we have two cases:
\begin{itemize}
 \item There exists a $q \in Q$ such that $\eta_Q \circ q \downarrow f$ is empty.
 \item There exists a $q \in Q$ such that $\eta_Q \circ q \downarrow f$ is not empty and does not have an initial object.
\end{itemize}

In the first case, we may consider the lifting problem below,
\[\begin{tikzcd}[ampersand replacement=\&]
	P \&\& {M(Q \setminus Q_{\geq q})} \\
	\&\& {M(Q)} \\
	{M(Q)} \&\& {MM(Q)}
	\arrow["{f'}", from=1-1, to=1-3]
	\arrow["f"', from=1-1, to=3-1]
	\arrow["{M(\iota_q)}", from=1-3, to=2-3]
	\arrow["{\eta_Q}", from=2-3, to=3-3]
	\arrow["{\eta_Q}"', from=3-1, to=3-3]
\end{tikzcd}\]
where $Q_{\geq q}$ is the subposet of $Q$ whose elements are greater than $q$ (with $\iota_q : Q \setminus Q_{\geq q} \to Q$ the inclusion), and where the map $f' : P \to M(Q \setminus Q_{\leq q}$ is induced from $f$ since $\eta_Q \circ q \downarrow f$ is empty.

Suppose there is a solution $h : Q \to M(Q \setminus Q_{\geq q})$. In particular, it makes the bottom triangle commute, but that is clearly impossible, as $h$ has to map $q \in Q$ to the "product" of some $q' \neq q$. The previous lifting problem against the active map $\iota_q$ has no solution, therefore, $f$ is not inert.

In the second case, since $\eta_Q \circ q \downarrow f$ is finite and non-empty by assumption, it must have at least two distinct minimal elements $m_0$ and $m_1$.
We define a map $P \to \mathbf{Csp}$, where $\mathbf{Csp}$ is the cospan poset pictured below, by mapping $p \in P$ to $0$ if $p \geq m_0$, to $1$ if $p \geq m_1$ and to $*$ otherwise.
\[\begin{tikzcd}[ampersand replacement=\&]
	0 \&\& 1 \\
	\& {*}
	\arrow[from=2-2, to=1-1]
	\arrow[from=2-2, to=1-3]
\end{tikzcd}\]

Similarly, we define a map $Q \to \mathbb{2}$ into the walking arrow category by mapping $q' \in Q$ to the codomain if $q' \geq q$ and to the domain otherwise.
Now, writing $c: \mathbf{Csp} \to \mathbb{2}$ for the functor mapping $*$ to the source, and mapping $0$ and $1$ to the target, we have a commutative square:

\[\begin{tikzcd}[ampersand replacement=\&]
	P \& {\mathbf{Csp}} \& {M(\mathbf{Csp})} \\
	\&\& {M(\mathbb{2})} \\
	{M(Q)} \& {M(\mathbb{2})} \& {MM(\mathbb{2})}
	\arrow[from=1-1, to=1-2]
	\arrow["f"', from=1-1, to=3-1]
	\arrow["{\eta_{\mathbf{Csp}}}", from=1-2, to=1-3]
	\arrow["{M(c)}", from=1-3, to=2-3]
	\arrow["{\eta_{M(\mathbb{2})}}", from=2-3, to=3-3]
	\arrow[from=3-1, to=3-2]
	\arrow["{\eta_{M(\mathbb{2})}}"', from=3-2, to=3-3]
\end{tikzcd}\]

It is clear by construction that there is no solution to this lifting problem. Hence, $f$ is not inert in this second case either. 
\end{proof}

\begin{lemma}
\label{retract_active}
 The active maps in $\Lambda(\mathbf{Dep})$ are closed under retract. 
\end{lemma}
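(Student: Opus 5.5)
We want: if $g : P \to M(Q)$ is a retract (in the arrow category of $\Lambda(\mathbf{Dep})$) of an active map $f = \eta_{Q'} \circ f_0 : P' \to M(Q')$, then $g$ is active. The plan is to characterise active maps intrinsically: a Kleisli map $g : P \to M(Q)$ is active exactly when every $g(p)$ is a ``singleton product'', i.e. lies in the image of $\eta_Q$. Since $\eta_Q$ is injective on objects and fully faithful (it is the Yoneda embedding restricted to $Q \subset Q^\triangleleft$), this pointwise condition suffices for $g$ to factor through $\eta_Q$ as a functor. So the task reduces to showing that each $g(p)$ has exactly one component.

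Write the retract data as Kleisli maps $a : P \to M(P')$, $b : P' \to M(P)$ with $\mu \circ M(b) \circ a = \eta_P$, and $c : Q \to M(Q')$, $d : Q' \to M(Q)$ with $\mu \circ M(d) \circ c = \eta_Q$. The two commutative squares give $f \circ a = c \circ g$ and $g \circ b = d \circ f$ (Kleisli composites). The first step is to use $\mu M(b) a = \eta_P$. By the description of $\mu$ as taking the union of components, $b$ maps each component of $a(p)$ to a product whose components all equal $p$. Since $M$ has discarded the empty object, each of these products equals $\mathbf{y}(p)$. Hence there is at least one $p'$ in $a(p)$ with $b(p') = \eta_P(p)$.

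Fix such a $p'$. From $g \circ b = d \circ f$, evaluated at $p'$, we get $g(p) = d(f_0(p'))$. Now apply the same reasoning to the retraction $\mu M(d) c = \eta_Q$: for each $q$, every component of $c(q)$ is sent by $d$ to $\mathbf{y}(q)$. The difficulty is that $f_0(p')$ need not lie in any $c(q)$, so this does not directly force $d(f_0(p'))$ to be a singleton. To handle this, use the other square: $c(g(p))$ (union over components $q$ of $g(p)$) equals $f(a(p))$, which contains $f_0(p')$ as a component. So $f_0(p')$ is a component of some $c(q)$ with $q$ a component of $g(p)$. Then $d(f_0(p')) = \mathbf{y}(q)$, so $g(p) = \mathbf{y}(q)$ is a singleton. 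Functoriality in $p$ comes for free from full faithfulness of $\eta_Q$, which shows $g$ is active.

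The main obstacle is making the ``component'' bookkeeping for $\mu$ and Kleisli composition rigorous. The key point is that the components of $\mu(\chi)$ are exactly the union of the components of the components of $\chi$, together with the fact that nonempty products cannot collapse because the empty object has been removed. I would verify this first, directly from the Yoneda-extension definition of $\mu$, as a small auxiliary observation before running the argument above.
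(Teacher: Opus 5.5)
Your strategy is sound and differs from the paper's. The paper factors the retract $I'\to J'$ as an inert map $I'\to K'$ followed by an active one. It then uses \cref{inert_llp} to lift $K'\to I$, and argues informally that the idempotent $K'\to I\to I'\to K'$ comes from a permutation, so that the inert part is an isomorphism. You instead characterise active maps pointwise (every value lies in the image of the order embedding $\eta_Q$) and chase elements through $ba=\mathrm{id}$, $dc=\mathrm{id}$, $fa=cg$, $gb=df$. This is more elementary. It needs neither the factorisation lemma nor \cref{inert_llp}, and it replaces the paper's ``iterative'' analysis of the idempotent by a direct check.

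As written, however, your bookkeeping treats objects of $M(Q)$ as sets of components with $\eta_Q(q)$ having the single component $q$, which is only right for discrete posets. The actual structure is as follows:
\begin{itemize}
\item an object of $M(Q)$ is a nonempty cosieve of $Q^\triangleleft$, ordered by reverse inclusion, with $\eta_Q(q)=Q_{\geq q}$;
\item the initial object $\mathbf{y}(\bot)=Q^\triangleleft$ is absorbing;
\item Kleisli composition is $(h\circ g)(p)=\bigcup_{x\in g(p)}h(x)$, with the cone point sent to the initial object.
\end{itemize}
In particular, products collapse along the order, so the auxiliary observation you plan to verify first is false in the form stated.

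Concretely, ``$b$ sends each component of $a(p)$ to $\mathbf{y}(p)$'' fails. Take $P=\{p<p_1\}$ and $P'=\{x,y\}$ discrete, with $a(p)=\{x,y\}$, $a(p_1)=\{y\}$, $b(x)=P_{\geq p}$ and $b(y)=P_{\geq p_1}$. Then $ba=\eta_P$, but $b(y)\neq\mathbf{y}(p)$. For the same reason, ``$d$ sends every component of $c(q)$ to $\mathbf{y}(q)$'' fails, and $f_0(p')$ need not be a minimal element of $(fa)(p)$.

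What does hold is weaker. First, $b(x)\subseteq P_{\geq p}$ for all $x\in a(p)$, with equality for some $p'$, so your first step survives. Second, $d(z)\subseteq Q_{\geq q}$ for all $z\in c(q)$. Your last step should then read as follows. From $fa=cg$ you get $f_0(p')\in c(q)$ for some $q\in g(p)$, and $q\in Q$ because $(fa)(p)=(cg)(p)$ omits the cone point. Hence $g(p)=d(f_0(p'))\subseteq Q_{\geq q}\subseteq g(p)$, and antisymmetry gives $g(p)=\eta_Q(q)$.

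With ``component'' read as ``element of the cosieve'', and this containment argument replacing your claimed equality $d(f_0(p'))=\mathbf{y}(q)$, your proof is complete.
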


\begin{proof}
 Consider a retract $I' \to J'$ of an active map $I \to J$ in $\Lambda(\mathbf{Dep})$ as below,

\[\begin{tikzcd}[ampersand replacement=\&]
	{I'} \&\& I \&\& {I'} \\
	{K'} \&\&\&\& {K'} \\
	{J'} \&\& J \&\& {J'}
	\arrow[from=1-1, to=1-3]
	\arrow[from=1-1, to=2-1]
	\arrow[from=1-3, to=1-5]
	\arrow[from=1-3, to=3-3]
	\arrow[from=1-5, to=2-5]
	\arrow[dashed, from=2-1, to=1-3]
	\arrow[from=2-1, to=3-1]
	\arrow[from=2-5, to=3-5]
	\arrow[from=3-1, to=3-3]
	\arrow[from=3-3, to=3-5]
\end{tikzcd}\]
 where we have factored $I' \to J'$ as an inert map $I' \to K'$ followed by an active map $K' \to J'$, and where we have considered the indicated dotted lift.
 
 We observe that the map $K' \to I \to I' \to K'$ is idempotent. Now consider an object $k \in K'$. By definition it comes by duplication from an object $i \in I'$, and $i$ is mapped by the inert map $I' \to TK'$ to some "product" $k_0 \times ... \times k_n$ where $k$ appears. The action of $TK' \to TTK'\to TK'$ therefore sends this "product" to itself. We want to show that it actually comes from a permutation of the set $\{k_0,...,k_n\}$. Suppose it is not the case: this means that some $k_i$ are discarded and some others are sent to "products" of size at least $2$ (but strictly less than $n+1$). If $k_{i_0}$ is mapped to such a "product" $k_{j_0} \times ... \times k_{j_m}$, then, again, we would like to show that the action of $TK' \to TK'$ on this "product" is by permutation of the set $\{k_{j_0},...,k_{j_m}\}$. Since the iterative process we just sketched strictly decreases the size of the considered set, it must reach an end, namely a step where it is indeed the case that the action comes from a permutation of the set of "components". Ultimately, this proves that each $k_i$ is the image (as a "product" of one object) of some $k_j$ under $K' \to TK'$. This means that  $K' \to TK'$ is active and comes from an isomorphism.
 
 It follows directly from the invertibility of $I' \to K' \to I'$ and $K' \to I' \to K'$ that $I' \to K'$ is an isomorphism. In particular, $I' \to J'$ is active. 
\end{proof}

\begin{proposition}
 The inert maps and the active maps are respectively the left and right class of a weak factorization system on $\Lambda(\mathbf{Dep})$.
\end{proposition}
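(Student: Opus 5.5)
We assemble the three preceding lemmas using the standard characterization of a weak factorization system $(\mathcal{L},\mathcal{R})$. Every morphism must factor as a map in $\mathcal{L}$ followed by a map in $\mathcal{R}$. We also need $\mathcal{L} = {}^{\pitchfork}\mathcal{R}$ and $\mathcal{R} = \mathcal{L}^{\pitchfork}$. Here $\mathcal{L}$ is the class of inert maps and $\mathcal{R}$ the class of active maps in $\Lambda(\mathbf{Dep})$. The factorization is exactly \cref{inert_active_fac}. The identity $\mathcal{L} = {}^{\pitchfork}\mathcal{R}$ is exactly \cref{inert_llp}, which characterizes the inert maps as those with the left lifting property against the active maps. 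In particular every inert map lifts against every active map, so $\mathcal{R} \subseteq \mathcal{L}^{\pitchfork}$.

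The remaining point is $\mathcal{L}^{\pitchfork} \subseteq \mathcal{R}$, and we would use the retract argument. Let $u : I' \to J'$ have the right lifting property against all inert maps. By \cref{inert_active_fac}, factor $u$ as an inert map $i : I' \to K'$ followed by an active map $a : K' \to J'$. Now consider the commutative square with top edge $\mathrm{id}_{I'}$, left edge $i$, right edge $u$ and bottom edge $a$. Since $u$ lifts against the inert map $i$, there is a diagonal $r : K' \to I'$ with $r i = \mathrm{id}_{I'}$ and $u r = a$. This exhibits $u$ as a retract of $a$ in the arrow category: the top row is $I' \xrightarrow{i} K' \xrightarrow{r} I'$ and the bottom row is $J' \xrightarrow{\mathrm{id}} J' \xrightarrow{\mathrm{id}} J'$. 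By \cref{retract_active}, active maps are closed under retracts, so $u$ is active.

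Combining these steps gives both lifting-property identities together with the factorization, hence a weak factorization system. The only delicate step is the closure of active maps under retracts, and it is already handled by \cref{retract_active}. What remains is to check that the retract diagram built above has the shape that lemma requires, namely a retract of an active map in the arrow category of the Kleisli category, and it does. Composition in $\Lambda(\mathbf{Dep})$ is Kleisli composition throughout, but no property beyond associativity and identities is needed for this formal argument.
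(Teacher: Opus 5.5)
Your proposal is correct and follows the paper's proof. The factorization is \cref{inert_active_fac}, the left class is identified by \cref{inert_llp}, and the right class comes from the retract argument (lifting $\mathrm{id}$ against the inert part of the factorization) combined with \cref{retract_active}; the only difference is that you state explicitly the inclusion of the active maps into the maps with the right lifting property against inert maps, which the paper leaves implicit in its appeal to \cref{inert_llp}.
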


\begin{proof}
 We have already seen in \cref{inert_active_fac} that every map in $\Lambda(\mathbf{Dep})$ factors as an inert map followed by an active one, and that the inert maps are characterized by having the left lifting property against the active ones (see \cref{inert_llp}).
 
 Therefore, to be able to conclude, we must show that the active maps are also characterized by having the right lifting property against the inert ones. Again, we make use of the usual retract argument, that is, given a map $f : I \to J$ having the right lifting property against the inert morphisms, we factor it as an inert morphism $g : I \to K$ followed by an active one $h : K \to J$. We then consider a solution to the following lifting problem, 
\[\begin{tikzcd}[ampersand replacement=\&]
	I \&\& I \\
	\\
	K \&\& J
	\arrow["id", from=1-1, to=1-3]
	\arrow["g"', from=1-1, to=3-1]
	\arrow["f", from=1-3, to=3-3]
	\arrow[dashed, from=3-1, to=1-3]
	\arrow["h"', from=3-1, to=3-3]
\end{tikzcd}\]
 showing that $f$ is a retract of $h$. We may therefore conclude by \cref{retract_active}. 
\end{proof}

\begin{corollary}
 The class of inert maps is closed under composition.
\end{corollary}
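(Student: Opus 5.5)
The plan is to derive the corollary directly from \cref{inert_llp}, which characterizes the inert maps as exactly those with the left lifting property against all active maps. Any class defined by a left lifting property against a fixed class is closed under composition, so no analysis of the comma categories $\eta_Q \downarrow f$ should be needed.

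Concretely, let $f : P \to Q$ and $f' : Q \to R$ be inert maps in $\Lambda(\mathbf{Dep})$, and let $g : A \to B$ be an active map. Consider a commutative square in $\Lambda(\mathbf{Dep})$ with $f' \circ f$ on the left, $g$ on the right, top arrow $u : P \to A$ and bottom arrow $v : R \to B$. First I will solve the square with $f$ on the left, top arrow $u$, right arrow $g$ and bottom arrow $v \circ f'$. Since $f$ is inert, \cref{inert_llp} gives a lift $h : Q \to A$ with $h \circ f = u$ and $g \circ h = v \circ f'$. Next, the square with $f'$ on the left, top arrow $h$, right arrow $g$ and bottom arrow $v$ commutes, so inertness of $f'$ gives $k : R \to A$ with $k \circ f' = h$ and $g \circ k = v$. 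Then $k \circ (f' \circ f) = h \circ f = u$, so $k$ solves the original problem. Hence $f' \circ f$ has the left lifting property against every active map, and by the converse direction of \cref{inert_llp} it is inert. Identities are also inert, either trivially from the same characterization or directly from the definition.

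The argument is formal. The only point needing care is that it uses both directions of \cref{inert_llp}, in particular the converse: a map with the left lifting property against all active maps is inert. Once that lemma is granted there is no real obstacle. Equivalently, one could cite the general fact that the left class of the weak factorization system in the preceding proposition is closed under composition.
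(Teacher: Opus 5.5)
Your proposal is correct and follows the paper's route. The paper states this as an immediate corollary of the weak factorization system proposition, i.e.\ of the fact that inert maps are exactly the maps with the left lifting property against active maps, and your two-step lifting argument is the standard verification that such a class is closed under composition; as you note, only \cref{inert_llp} (both directions) is needed, not the factorization or the retract-closure of active maps.
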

 Given an inert morphism $k : I \to MJ$ in $\Lambda(\mathbf{Dep})$, we have a comma square

\[\begin{tikzcd}[ampersand replacement=\&]
	{\eta_J \downarrow k} \&\& J \\
	\\
	I \&\& {M(J)}
	\arrow["{p_0}", from=1-1, to=1-3]
	\arrow["{p_1}"', from=1-1, to=3-1]
	\arrow[between={0.3}{0.7}, Rightarrow, nfold, from=1-3, to=3-1]
	\arrow["{\eta_J}", from=1-3, to=3-3]
	\arrow["k"', from=3-1, to=3-3]
\end{tikzcd}\]
where $p_0$ admits a left adjoint right inverse $L$ by definition.

In particular, the top square in the diagram below is exact:
\[\begin{tikzcd}[ampersand replacement=\&]
	J \&\& J \\
	\\
	{\eta_J \downarrow k} \&\& J \\
	\\
	I \&\& {M(J)}
	\arrow["id", from=1-1, to=1-3]
	\arrow["L"', from=1-1, to=3-1]
	\arrow["id", from=1-3, to=3-3]
	\arrow["{p_0}", from=3-1, to=3-3]
	\arrow["{p_1}"', from=3-1, to=5-1]
	\arrow[between={0.3}{0.7}, Rightarrow, nfold, from=3-3, to=5-1]
	\arrow["{\eta_J}", from=3-3, to=5-3]
	\arrow["k"', from=5-1, to=5-3]
\end{tikzcd}\]

Therefore, the composite is exact. We will write $k_t$ for the composite $p_1 \circ L$

As in the affine case, we write $\Lambda(\mathbf{Dep})^\dagger$ for the wide subcategory of $\Lambda(\mathbf{Dep})$ whose maps are the inert maps.

\begin{proposition}
 The mapping from $(\Lambda(\mathbf{Dep})^\dagger)^{op}$ to $\mathbf{Dep}$ which is the identity on object and maps $k$ to $k_t$ defines a faithful functor.
\end{proposition}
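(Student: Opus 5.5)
We have to show three things: $k\mapsto k_t$ preserves identities, it reverses composition, and it is injective on each hom-set. The plan is to first describe $k_t$ concretely, then check each property on that description.

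\textbf{Concrete description of $k_t$.} Let $k : I \to M(J)$ be inert. By \cref{lari}, for each $j \in J$ the fiber $\eta_J \circ j \downarrow k$ has an initial object. This is a pair $(i, \mathbf{y}(j) \to k(i))$ with $i$ minimal among the objects of $I$ whose image has $j$ as a "component". The left adjoint right inverse $L$ sends $j$ to this initial object, so $k_t(j) = p_1 L(j)$ is that unique minimal $i$.

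\textbf{Identities.} For $k = \eta_I$, the comma category $\eta_I \downarrow \eta_I$ is the arrow category of $I$. The initial object over $j$ is $\mathrm{id}_j$, so $(\eta_I)_t = \mathrm{id}_I$.

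\textbf{Composition.} Let $k : I \to M(J)$ and $l : J \to M(K)$ be inert; their composite $\mu_K \circ M(l) \circ k$ is inert by the preceding corollary. Fix $m \in K$. We must show that the minimal $i$ whose image $\mu_K M(l) k(i)$ has $m$ as a component is $k_t(l_t(m))$. By the description of the multiplication, $m$ is a component of $\mu_K M(l) k(i)$ iff there is a component $j$ of $k(i)$ with $m$ a component of $l(j)$.
\begin{itemize}
\item By initiality in $\eta_K \circ m \downarrow l$, any such $j$ satisfies $j \geq l_t(m)$.
\item By functoriality of $k$ in the presheaf description (components are closed under dependency), $l_t(m)$ is then also a component of $k(i)$.
\end{itemize}
So the relevant set of $i$ is exactly the set of $i$ with $l_t(m)$ a component of $k(i)$. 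Its initial element is $k_t(l_t(m))$, as required. To make this rigorous we can paste the exact squares defining $k_t$ and $l_t$ and use uniqueness of left adjoint right inverses up to (here strict, since we are in posets) equality. Functoriality of $k_t$ itself holds because it is the composite $p_1 \circ L$ of functors.

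\textbf{Faithfulness (main obstacle).} We need: if $k, k' : I \to M(J)$ are inert with $k_t = k'_t$, then $k = k'$. The key claim is that an inert $k$ is determined by $k_t$. Concretely, $k(i)$ should be the "product" of those $j$ with $k_t(j) \leq i$, matching the map $g$ built in the proof of \cref{inert_active_fac}.
\begin{itemize}
\item One inclusion: if $j$ is a component of $k(i)$, the initial object gives $k_t(j) \leq i$.
\item The converse: if $k_t(j) \leq i$, then $k(k_t(j)) \leq k(i)$ in $M(J)$ by monotonicity of $k$. The order on $M(J)$ is reversed inclusion of cosieves of $J^\triangleleft$, so the components of $k(k_t(j))$, which include $j$, are contained in those of $k(i)$.
\end{itemize}
The orientation of this order is exactly where the argument could break, so we will check it against the convention $M(P) = (\mathbb{2}^{P^\triangleleft})^{op}$ (with the terminal object removed) before finishing. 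Granting it, $k(i)$ is reconstructed from $k_t$, so $k = k'$.
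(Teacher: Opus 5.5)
Your argument works and takes a different route from the paper. The paper never computes $k_t$. It uses the exact square through $L$ to get $\eta_J^* k_* \simeq k_t^*$ and chains these isomorphisms to obtain $(k'\circ k)_t^* \simeq (k'_t)^* k_t^*$. Passing to left Kan extensions and evaluating on representables then gives $(k'\circ k)_t = k_t\circ k'_t$, strictly, since posets have no non-trivial isomorphisms. Faithfulness comes from cancelling $\eta_J^*$ in $\eta_J^*(k_0)_* \simeq \eta_J^*(k_1)_*$ and using representables again. You instead describe $k_t(j)$ elementwise as the least element of the fiber of $\eta_J\downarrow k$ over $j$, and check functoriality and faithfulness by hand. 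The formal route fits the paper's derivator-style viewpoint. However, cancelling $\eta_J^*$ tacitly requires that $k$ be recoverable from how the representables sit below the $k(i)$. Your formula ``$k(i)$ is the product of the $j$ with $k_t(j)\le i$'' is exactly that fact made explicit, so your proof is the more transparent one.

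Two of your justifications need rephrasing, because they appeal to the concrete order on $M(J)$ and point the wrong way under the convention you cite.
\begin{itemize}
\item In the faithfulness step, with reversed inclusion of cosieves, $k(k_t(j))\le k(i)$ means $k(i)\subseteq k(k_t(j))$. That is the opposite of the containment you draw.
\item In the composition step, cosieves are closed upwards, whereas you need to pass from $j$ down to the smaller element $l_t(m)\le j$.
\end{itemize}
Neither step needs an orientation check if ``$j$ is a component of $X$'' is taken to mean $\eta_J(j)\le X$, which is how the comma category and \cref{lari} read it. Then the fiber $\{i \mid \eta_J(j)\le k(i)\}$ is up-closed in $I$ automatically: postcompose the structure arrow with $k(i\le i')$. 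Hence it equals ${\uparrow}k_t(j)$, so $\eta_J(j)\le k(i)\iff k_t(j)\le i$. Likewise, $l_t(m)\le j$ and $\eta_J(j)\le k(i)$ give $\eta_J(l_t(m))\le \eta_J(j)\le k(i)$ by monotonicity of $\eta_J$, not of $k$.

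With this reading, the only facts about $M$ you use are:
\begin{itemize}
\item the description of $\mu$ in terms of components;
\item that an element of $M(J)$ is determined by the representables below it (the ``union of representables'' observation in the proof of \cref{inert_active_fac}).
\end{itemize}
Both are stated in the paper, so nothing needs to be ``granted'' and your proof is complete. Finally, say ``least'' rather than ``minimal'' for $k_t(j)$, since it is an initial object of the fiber.
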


\begin{proof}
 For functoriality, we take advantage of the categorical structure of the Kleisli category $\Lambda(\mathbf{Dep})$ and of the previous observation regarding the exact square to see that, for $k : I \to M(J)$ and $k' : J \to M(K)$ two composable arrows in $\Lambda(\mathbf{Dep})$, we have the following natural isomorphisms:
 $$(k' \circ k)_t^* \simeq  \eta_K^* \circ (k'\circ k)_*  \simeq \eta_K^* \circ k'_* \circ \eta_J^* \circ k_* \simeq (k'_t)^* \circ k_t^*$$
 
 In particular, we also have $((k' \circ k)_t)_! \simeq (k_t)_! \circ (k'_t)_!$, so we obtain  $k' \circ k)_t \simeq k_t \circ k'_t$ by reasoning on representables (since left Kan extension maps representables to representables). This canonical natural isomorphism between arrows in $\mathbf{FinPos}$ must be the identity since there are no non-trivial isomorphisms in a poset, proving that $k' \circ k)_t = k_t \circ k'_t$. The case for the identity is similar, so we can conclude that we have indeed defined a functor.
 
 For faithfulness, we observe that, given $k_0,k_1 : I \to M(J)$ such that $(k_0)_t = (k_1)_t$, $(\eta_J)_* \circ \eta_J^* \simeq id_{\mathbf{Set}^J}$ because $\eta_J : J \to M(J)$ is a fully faithful functor. In particular, $(k_0)_t = (k_1)_t$ implies the following natural isomorphisms:
 $$\eta_J^*  \circ (k_0)_* \simeq (k_0)_t^* = (k_1)_t^* \simeq \eta_J^*  \circ (k_1)_*$$
 Hence, $(k_0)_* \simeq (k_1)_*$ and we conclude just as before that $k_0 = k_1$.
\end{proof}

We will write $\mathbf{Dep}^+$ for the image of the functor $(-)_t : (\Lambda(\mathbf{Dep})^\dagger)^{op} \to \mathbf{Dep}$, which is therefore isomorphic to $(\Lambda(\mathbf{Dep})^\dagger)^{op}$ via this transposition functor.

\section{Dependent operads over $\mathbf{Dep}$}

\begin{definition}
 A dependent $(\infty-)$operad is an inner fibration $O^\otimes \to \mathbf{N}\Lambda(\mathbf{Dep})$ satisfying the following conditions:
 \begin{itemize}
  \item For every inert morphism $\phi : I \to J$ in $\Lambda(\mathbf{Dep})$, and every object $x$ of $O^\otimes$ lying over $I$, there exists a cocartesian lift $x \to x'$ of $\phi$.
  \item For any objects $I$ and $J$ of $\Lambda(\mathbf{Dep})$ and for any map $\phi : I \to J$, any objects $x \in O^\otimes_I$ and $y \in O^\otimes_J$ and any family of cocartesian edges $\{y \to y_i \;|\; i \in \mathbf{ElD}J\}$ lying over the morphisms in $\mathbf{ElD}J$ (transposed as inert morphisms of $\Lambda(\mathbf{Dep})$), the induced map
  $$Hom^\phi(x,y) \to \varprojlim_{\rho_i \in \mathbf{ElD}J} Hom(x,y_i)^{\rho_i \circ \phi}$$
  is an equivalence.
  \item For any object $I \in \Lambda(\mathbf{Dep})$ and the family of cocartesian edges $\{x \to x_k | x \in O^\otimes_I, k \in \mathbf{ElD}I\}$ lying over the morphisms $\rho_k : I \to I_k$, obtained from transposing monomorphisms $I_k \to I$ in $\mathbf{ElD}I$, is such that the induced map
  $$O^\otimes_I \to \varprojlim_{\rho_k \in \mathbf{ElD}I} O^\otimes_{I_k}$$
  is an equivalence.
 \end{itemize}
 
 We write $\mathbf{Op}$ for the category of dependent operads, where the morphisms are the functors $O_0^\otimes \to O_1^\otimes$ above $\mathbf{N}\Lambda(\mathbf{Dep})$ that preserve cocartesian morphisms lying above an inert morphism.
\end{definition}

\begin{definition}
 A dependent signature over $\Phi$ is a cocartesian fibration $S^\otimes \to \mathbf{N}\Lambda(\mathbf{Dep})^\dagger$ satisfying the following conditions:
 \begin{itemize}
  \item For any objects $I$ and $J$ of $\Lambda(\mathbf{Dep})^\dagger$ and any map $\phi : I \to J$, any objects $x \in S^\otimes_I$ and $y \in S^\otimes_J$ and any family of cocartesian edges $\{y \to y_i \;|\; i \in \mathbf{ElD}J\}$ lying over the morphisms in $\mathbf{ElD}J$ (transposed as inert morphisms of $\Lambda(\mathbf{Dep})$), the induced map
  $$Hom^\phi(x,y) \to \varprojlim_{\rho_i \in \mathbf{ElD}J} Hom(x,y_i)^{\rho_i \circ \phi}$$
  is an equivalence.
  \item For any object $I \in \Lambda(\mathbf{Dep})^\dagger$, the family of cocartesian edges $\{x \to x_k | x \in S^\otimes_I, k \in \mathbf{ElD}I\}$ lying over the morphisms $\rho_k : I \to I_k$, obtained from transposing monomorphisms $I_k \to I$ in $\mathbf{ElD}I$, is such that the induced map
  $$S^\otimes_I \to \varprojlim_{\rho_k \in \mathbf{ElD}I} S^\otimes_{I_k}$$
  is an equivalence.
 \end{itemize}
 
 We write $\mathbf{Sig}$ for the category of dependent signatures, where the morphisms are the functors $S_0^\otimes \to S_1^\otimes$ above $\mathbf{N}\Lambda(\mathbf{Dep})^\dagger$ that preserve cocartesian morphisms.
\end{definition}

There is a functor $U : \mathbf{Op} \to \mathbf{Sig}$ defined by pulling back along the inclusion $\Lambda(\mathbf{Dep})^\dagger \to \Lambda(\mathbf{Dep})$.

\begin{definition}
Define a category $\mathbf{DepSet}$ as in \cref{dependent_set_over_signature_affine}, using $\Lambda(\mathbf{Dep})$ instead of $\Lambda(\mathbf{Dep_{aff}})$
\end{definition}

We still get a category:

\begin{proposition}
$\mathbf{DepSet}$ is a category: it admits identity morphisms and composition is associative.
\end{proposition}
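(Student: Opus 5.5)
The plan is to argue directly from the definition of $\mathbf{DepSet}$, which is the one of \cref{dependent_set_over_signature_affine} with $\Lambda(\mathbf{Dep_{aff}})$ replaced by $\Lambda(\mathbf{Dep})$. The steps mirror the proof of \cref{depset_cat}, with the monad $M$ (unit $\eta$, multiplication $\mu$) in place of $T$. Throughout, I will use only two formal inputs.
\begin{enumerate}
 \item The assignment $I \mapsto \mathbf{Set}^I$, $u \mapsto u_*$, is a pseudofunctor on $\mathbf{FinPos}$, with the canonical coherent isomorphisms $(vu)_* \simeq v_* u_*$ and $(\mathrm{id})_* \simeq \mathrm{id}$. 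Here $u_*$ is the Kan extension used in the definition, and it is adjoint to $u^*$ in the way required by the chain of bijections defining composition.
 \item For every finite poset $J$, the unit $\eta_J : J \to M(J)$ is fully faithful. This holds because it is a Yoneda embedding composed with the full inclusion $J \subset J^\triangleleft$, and it survives the removal of the terminal object of $M(J)$. Hence the unit (or counit) of the adjunction between $\eta_J^*$ and $(\eta_J)_*$ gives a canonical isomorphism $\eta_J^*(\eta_J)_* \simeq \mathrm{id}_{\mathbf{Set}^J}$.
\end{enumerate}
Since $\Lambda(\mathbf{Dep})$ is a Kleisli category of a genuine monad on $\mathbf{FinPos}$, its composition is strictly associative and unital. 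The first components of morphisms in $\mathbf{DepSet}$ therefore compose strictly, and only the second components need checking.

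First I treat \emph{identities}. For $(I,X)$, take the pair $(\eta_I, \mathrm{id}_X)$, where $\mathrm{id}_X$ is read through $\eta_I^*(\eta_I)_*X \simeq X$. To check left and right unitality, I unfold the composition chain. Composing $(k,F) : (I,X) \to (J,Y)$ with the identity on $(J,Y)$ involves $(\mu_J)_*(M\eta_J)_*$. By pseudofunctoriality this is isomorphic to $(\mu_J \circ M\eta_J)_* = \mathrm{id}_*$, thanks to the monad law $\mu \circ M\eta = \mathrm{id}$. The map $(k_*X \to (\eta_J)_*Y) \mapsto (k_*X \to (\eta_J)_*Y)$ is then recovered after cancelling the unit isomorphism of $\eta_J$ and transposing back. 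Precomposing with the identity of $(I,X)$ is handled the same way: it uses $\mu_J \circ Mk \circ \eta_I = \mu_J \circ \eta_{MJ} \circ k = k$, which combines naturality of $\eta$ with the other monad law, together with the triangle identity for the adjunction at $\eta_I$.

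Next I treat \emph{associativity}. Given three composable morphisms $(k_0,F_0)$, $(k_1,F_1)$, $(k_2,F_2)$, I transpose everything to the form $k_*X \to (\eta)_*Y$ in $\mathbf{Set}^{M(-)}$, exactly as in the two diagrams of \cref{depset_cat}.
\begin{itemize}
 \item The first bracketing produces a composite of arrows in $\mathbf{Set}^{M(L)}$ out of $(\mu_L)_*(Mk_2)_*(\mu_K)_*(Mk_1)_*(k_0)_*X$.
 \item The second bracketing produces a composite out of $(\mu_L)_*(M\mu_L)_*(M^2k_2)_*(Mk_1)_*(k_0)_*X$.
\end{itemize}
These two sources are identified by pseudofunctoriality together with the strict identities $\mu_L \circ M\mu_L = \mu_L \circ \mu_{ML}$ and $\mu_L \circ Mk_2 \circ \mu_K = \mu_L \circ M(\mu_L \circ Mk_2)$, the latter by naturality of $\mu$. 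Under this identification, the remaining arrows are images of $F_0, F_1, F_2$ under functors of the form $v_*$. Equality of the two composites then follows from functoriality of each $v_*$ and associativity of composition in $\mathbf{Set}^{M(L)}$.

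The main obstacle is coherence: one must make sure that the various canonical isomorphisms $(vu)_* \simeq v_*u_*$ and $\eta^*\eta_* \simeq \mathrm{id}$ inserted along the two bracketings agree. I would handle this by invoking the coherence theorem for pseudofunctors, so that all such composite isomorphisms between a given pair of composites are equal. For the isomorphisms involving $\eta$, I would use naturality of the adjunction unit and counit together with the triangle identities. Because $\Lambda(\mathbf{Dep})$ is built from posets, every identification at the level of the first component is a strict equality, so no further $2$-cells arise there.
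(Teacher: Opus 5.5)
Your proposal is correct and takes essentially the paper's approach. The paper only indicates that the proof of \cref{depset_cat} adapts: identities come from full faithfulness of $\eta_I$, and associativity from comparing the same two transposed composites using associativity in the Kleisli category and in $\mathbf{Set}^{M(L)}$. You add an explicit check of the unit laws via the monad laws and triangle identities, and you make explicit the pseudofunctor coherence of $u \mapsto u_*$ that the paper leaves implicit.
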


\begin{proposition}
 Together with the projection functor $\mathbf{DepSet} \to \Lambda(\mathbf{Dep})$, the previous construction defines a dependent operad.
\end{proposition}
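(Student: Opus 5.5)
We follow the proof of the corresponding affine statement, replacing the pullback square used there by the exact square constructed just before the definition of $\Lambda(\mathbf{Dep})^\dagger$. Recall that a morphism $(I,X) \to (J,Y)$ in $\mathbf{DepSet}$ is a pair $(k,F)$ with $k : I \to M(J)$ and $F \in Hom_{\mathbf{Set}^J}(\eta_J^* k_* X, Y)$. The projection $\mathbf{DepSet} \to \Lambda(\mathbf{Dep})$ is a functor by construction of the composition law, and since we work $1$-categorically, being an inner fibration on nerves is automatic. So only the three axioms of a dependent operad need to be checked.

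\emph{Cocartesian lifts of inert maps.} Let $k : I \to M(J)$ be inert, with comma square $\eta_J \downarrow k$, left adjoint right inverse $L$ of $p_0$, and $k_t = p_1 \circ L : J \to I$. We showed that the composite square relating $k_t$, $\mathrm{id}_J$, $k$ and $\eta_J$ is exact, which gives a canonical isomorphism $\eta_J^* k_* \simeq k_t^*$.
\begin{itemize}
\item For $X \in \mathbf{Set}^I$, we take $Y := k_t^* X$ and the arrow $(k, \mathrm{id})$ corresponding to $\mathrm{id}_{k_t^* X}$ under this isomorphism.
\item To show it is cocartesian, we unwind composition. A map $(k',F') : (J,Y) \to (K,Z)$ composed with $(k,\mathrm{id})$ gives a map out of $(I,X)$ over $k' \circ k$.
\item Conversely, any map $(I,X) \to (K,Z)$ over $k' \circ k$ is a morphism $\eta_K^*(k'\circ k)_* X \to Z$. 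Rewriting with $(k'\circ k)_* \simeq \mu_* (Mk')_* k_*$ and the isomorphism $\eta_J^* k_* \simeq k_t^*$, it corresponds uniquely to a morphism $\eta_K^* k'_* k_t^* X \to Z$.
\end{itemize}

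The main obstacle is this last identification. It needs $\eta_K^* \mu_* (Mk')_* k_* \simeq \eta_K^* k'_* \eta_J^* k_*$, which is the same chain of isomorphisms used in the proof that $(-)_t$ is a functor. The key input is that $k_*$ factors through the essential image of $(\eta_J)_*$ after the relevant restriction. I would first try to deduce this from exactness of the comma square together with $\eta_J$ being fully faithful, so that $(\eta_J)_*\eta_J^* \simeq \mathrm{id}$ on the relevant objects. If that fails, I would verify it directly on representables, using the explicit description of inert maps from \cref{lari}: each $j$ comes from a unique minimal $p \in I$.

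\emph{Segal conditions 2 and 3.} The fibre over $I$ is $\mathbf{Set}^I$, and cocartesian transport along the transposed monomorphism $\rho_k$ is restriction along $I_k \subset I$. Condition 3 therefore reduces to $\mathbf{Set}^I \simeq \varprojlim_{\mathbf{ElD}I} \mathbf{Set}^{I_k}$, which holds because $I$ is the colimit of its elementary diagram in $\mathbf{FinPos}$ (\cref{elm_diag_depaff} and the following remark) and $\mathbf{Set}^{(-)}$ sends colimits of categories to limits. For condition 2, $Hom^\phi(x,y) = Hom_{\mathbf{Set}^J}(\eta_J^*\phi_* x, y)$, and by the same limit decomposition of $\mathbf{Set}^J$ this is the limit over $\mathbf{ElD}J$ of $Hom_{\mathbf{Set}^{J_i}}(\text{restrictions})$. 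It remains to identify each term with $Hom^{\rho_i\circ\phi}(x,y_i)$, using the isomorphism established in the first step for the inert maps $\rho_i$.
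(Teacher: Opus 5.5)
You are following the same route as the paper. You take the lift $(k,\mathrm{id})$ with target $k_t^*X$, justified by $\eta_J^*k_*\simeq k_t^*$, and you get the Segal conditions from $\mathbf{Set}^{(-)}$ turning the elementary-diagram colimit into a limit. The paper simply declares the lift ``clearly cocartesian''. You go further and correctly isolate what cocartesianness needs: $\eta_K^*(k'\circ k)_*\simeq\eta_K^*k'_*\eta_J^*k_*$ for $k$ inert and $k'$ arbitrary. That step is a genuine gap, and neither of your fallback strategies can close it. The identification is false as soon as $k$ duplicates, and duplications are inert here: the operad for categories uses ``the inert morphism that duplicates the point''.

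\textbf{Counterexample.} Let $d\colon\{*\}\to M(\{a,b\})$ send $*$ to the ``product'' of $a$ and $b$. Then $d_t$ is constant and $d_t^*X=(X,X)$. Let $k'=\eta_{\{c\}}\circ f$ be the active map induced by $f\colon\{a,b\}\to\{c\}$.
\begin{itemize}
\item On one side, $k'\circ d=\mu\circ M(\eta)\circ M(f)\circ d=M(f)\circ d$ sends $*$ to $\eta(c)$. So it is the identity of $\{*\}\cong\{c\}$ in $\Lambda(\mathbf{Dep})$, and $\eta^*(k'\circ d)_*X\simeq X$.
\item On the other side, $\eta^*k'_*d_t^*X\simeq\eta^*\eta_*f_*(X,X)\simeq X\times X$.
\item Cocartesianness would therefore require bijections $Hom(X\times X,Z)\cong Hom(X,Z)$. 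This already fails for $X=Z$ a two-element set, since $16\neq 4$.
\item No other lift helps. Testing against $g=\mathrm{id}_{\{a,b\}}$ forces the target of any cocartesian lift over $d$ to be $\eta^*d_*X\simeq(X,X)$.
\end{itemize}
So the first axiom cannot be established this way for duplicating inert maps. This is not a departure from the paper: its one-line proof is exposed to the same example. But it means the step you flag is a real obstruction, not a routine verification.

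Two smaller points.
\begin{itemize}
\item Your first strategy invokes $(\eta_J)_*\eta_J^*\simeq\mathrm{id}$. Full faithfulness of $\eta_J$ only gives the other composite, $\eta_J^*(\eta_J)_*\simeq\mathrm{id}$, and $k_*X$ is not in general right Kan extended from $J$.
\item Segal condition 2 needs a different Beck--Chevalley statement, $\eta_{J_i}^*(\rho_i\circ\phi)_*\simeq(\rho_i)_t^*\,\eta_J^*\phi_*$, with the inert map composed on the \emph{other} side. Your first step does not supply this, so it has to be checked separately.
\end{itemize}
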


\begin{proof}
The proof of \cref{depset_cat} adapts directly, making the following observation: we have that, for $X$ an object of $\mathbf{DepSet}$ lying over $I$, that is $X \in \mathbf{Set}^I$, the object $Y := k_t^* X$ which lies over $J$, and together with the identity map $id_{k_t^* X} \in Hom(k_t^* X,k_t^*X)$ (where $k_t^* \simeq \eta_J^* k_*$), this provides a lifting of $k$ with domain $X$ that is clearly cocartesian.
\end{proof}

As in \cref{depset_sig}, we consider the following definition:

\begin{definition}
We write $\mathbf{DepSet}^*$ for the pullback in the diagram below:
\[\begin{tikzcd}[ampersand replacement=\&]
	{\mathbf{DepSet}^*} \&\& {\mathbf{DepSet}} \\
	\\
	{\Lambda(\mathbf{Dep})^\dagger} \&\& {\Lambda(\mathbf{Dep})}
	\arrow[from=1-1, to=1-3]
	\arrow[from=1-1, to=3-1]
	\arrow["\ulcorner"{anchor=center, pos=0.125}, draw=none, from=1-1, to=3-3]
	\arrow[from=1-3, to=3-3]
	\arrow[from=3-1, to=3-3]
\end{tikzcd}\]
\end{definition}

\begin{example}
 In this section, our running example is that of categories, we define $$(S_\mathbf{Cat}^\otimes)_0 \to \Lambda(\mathbf{Dep})^\dagger_0$$ as the fibration that has exactly two non-empty fibers, which are isomorphic to the one point set, over the one object poset and over the span $\bullet \leftarrow \bullet \rightarrow \bullet$. We denote the corresponding objects of $S_\mathbf{Cat}$ respectively by $o$ and $m$. 
 
 The dependent operad for categories $$T_\mathbf{Cat} : O_\mathbf{Cat} \to \Lambda(\mathbf{Dep})$$ has the same objects as $S_\mathbf{Cat}$ and is generated by two additional morphisms:
 \begin{itemize}
  \item $id : o \to m$ lying over the composite of the inert morphism that duplicate the point followed by the inclusion corresponding to the two codomains of the span legs.
  \item $comp : c \to m$ where $c$ is the (unique) object lying over the poset $P$ pictured below,
\[\begin{tikzcd}[ampersand replacement=\&]
	\& \bullet \&\& \bullet \& \\
	\bullet \&\& \bullet \&\& \bullet
	\arrow[from=1-2, to=2-1]
	\arrow[from=1-2, to=2-3]
	\arrow[from=1-4, to=2-3]
	\arrow[from=1-4, to=2-5]
\end{tikzcd}\]
lying over the functor obtained by composing the active inclusion of $P$ in
\[\begin{tikzcd}[ampersand replacement=\&]
	\& \textcolor{rgb,255:red,214;green,92;blue,92}{\bullet} \& \bullet \& \textcolor{rgb,255:red,214;green,92;blue,92}{\bullet} \& \\
	\bullet \&\& \textcolor{rgb,255:red,214;green,92;blue,92}{\bullet} \&\& \bullet
	\arrow[from=1-2, to=2-3]
	\arrow[from=1-3, to=2-1]
	\arrow[from=1-3, to=2-5]
	\arrow[from=1-4, to=2-3]
	\arrow[from=1-4, to=2-5]
	\arrow[from=2-1, to=1-2]
\end{tikzcd}\]
with the inert morphism that discards the three objects in red.
 \end{itemize}
These morphisms are subject to the usual equations for categories. 
\end{example}

We define a notion of algebra over an operad, and of dependent sets over a signature just like in \cref{algebra_affine} and \cref{dependent_set_over_signature_affine} respectively.

The proof of \cref{presheaf_topos} adapts directly to this new setting:

\begin{proposition}
 $\mathbf{DepSet}^{S}$ is equivalent to a presheaf topos.
\end{proposition}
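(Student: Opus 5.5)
We follow the proof of \cref{presheaf_topos} essentially verbatim, replacing $\Lambda(\mathbf{Dep_{aff}})^\dagger$ by $\Lambda(\mathbf{Dep})^\dagger$. First, we strictify: $S^\otimes \to \Lambda(\mathbf{Dep})^\dagger$ is a cocartesian fibration of $1$-categories, so it is equivalent to a split opfibration. Its straightening is a strict functor $S_t : \Lambda(\mathbf{Dep})^\dagger \to \mathbf{Cat}$. Similarly, $\mathbf{DepSet}^* \to \Lambda(\mathbf{Dep})^\dagger$ is an opfibration, with cocartesian lifts given by $k_t^*$ as in the proposition above. Using the faithful transposition functor $(-)_t$ and the isomorphism $(\Lambda(\mathbf{Dep})^\dagger)^{op} \simeq \mathbf{Dep}^+$, it corresponds to $D : I \mapsto \mathbf{Set}^{I}$, acting by precomposition with $k_t$. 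The point to check is that $D$ is a \emph{strict} functor. This follows from the strict equality $(k'\circ k)_t = k_t \circ k'_t$ established above, so $D$ is $I\mapsto \mathbf{Set}^I$ composed with the strict functor $(-)_t$.

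Next, we identify $\mathbf{DepSet}^S$ with $Hom_{\mathbf{CAT}^{\Lambda(\mathbf{Dep})^\dagger}}(S_t,D)$. Morphisms of cocartesian fibrations correspond to pseudo-natural transformations. Since the target is split, we may use strict ones, up to equivalence. By cartesian closedness of $\mathbf{CAT}^{\Lambda(\mathbf{Dep})^\dagger}$, this category is $Hom(*,D^{S_t}) \simeq \varprojlim D^{S_t}$. The end formula then gives
$$D^{S_t}(I) \simeq \int_{J} \Pi_{Hom(I,J)} \mathbf{Set}^{J \times S_t(J)} \simeq \mathbf{Set}^{C_I}, \qquad C_I := \int^{J \in \Lambda(\mathbf{Dep})^\dagger} Hom(I,J) \times J \times S_t(J).$$
Here we use that $C\mapsto \mathbf{Set}^C$ sends colimits in $\mathbf{Cat}$ to limits. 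The coend is taken in $\mathbf{Cat}$, with $J$ acting contravariantly through $(-)_t$. The functoriality in $I$ is by precomposition, so $C_{(-)}$ is a diagram $(\Lambda(\mathbf{Dep})^\dagger)^{op} \to \mathbf{Cat}$ and $D^{S_t} \simeq \mathbf{Set}^{C_{(-)}}$.

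Finally, $\varprojlim_I \mathbf{Set}^{C_I} \simeq \mathbf{Set}^{\varinjlim_I C_I}$, again because $\mathbf{Set}^{(-)}$ turns colimits of small categories into limits of categories. Hence $\mathbf{DepSet}^S$ is a presheaf category on the small category $\varinjlim C_{(-)}$. This category is small because $\Lambda(\mathbf{Dep})^\dagger$ is small and the $S_t(J)$ may be taken small. The paper phrases this as a ``limit of $C_{(-)}$''; with the variance made precise, it is the colimit in $\mathbf{Cat}$, matching the affine argument.

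The main obstacle is the strictness and 2-categorical bookkeeping rather than any new combinatorics. Specifically, we must ensure that the limits above are genuinely the strict (or equivalently pseudo) limits computed by the end and coend. The relevant diagrams are strict, thanks to the rigidity of posets used in the functoriality proof of $(-)_t$. Pseudo-limits of split diagrams of categories agree with strict ones up to equivalence here, since the weighted-limit formula for $Hom(S_t,D)$ is computed objectwise from the exponential. I would first verify that the exponential formula holds for the strict functor 2-category, and then invoke the standard fact that strict maps into the split fibration $\mathbf{DepSet}^*$ suffice up to equivalence.
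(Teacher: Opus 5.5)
Your route is the paper's own. Its proof of this statement just says the argument of \cref{presheaf_topos} adapts, and you transport that argument. You are also right that $C_{(-)}$ is contravariant, so the site is $\varinjlim_I C_I$ rather than a limit; by co-Yoneda this is just $\int^{J} J\times S_t(J)$.

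\textbf{The gap.} The problem is the step replacing $\mathbf{DepSet}^S$ by $Hom_{\mathbf{CAT}^{\Lambda(\mathbf{Dep})^\dagger}}(S_t,D)$, the category of \emph{strict} natural transformations. The paper takes this step without comment too. Functors $S^\otimes\to\mathbf{DepSet}^*$ preserving cocartesian arrows correspond to \emph{pseudo}-natural transformations $S_t\Rightarrow D$. These can be traded for strict ones when the \emph{source} is flexible; splitness of the target plays no role. For the same reason, the strict limit of $D^{S_t}$ need not be its pseudo-limit.

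\textbf{A counterexample.} The step fails as soon as an elementary poset has a non-trivial automorphism.
\begin{enumerate}
\item Let $E$ be the span ($0<1$, $0<2$) and $\sigma$ its swap. Then $\eta_E\sigma$ is an isomorphism of $\Lambda(\mathbf{Dep})$, hence inert, with transpose $\sigma$.
\item Take the paper's signature $S_{\mathbf{Cat}}$, with fibers $\{o\}$ over $\mathbb{1}$ and $\{m\}$ over $E$. Its fibers are discrete, so $S^\otimes$ is already split and $S_t(\sigma)=\mathrm{id}$.
\item A cocartesian-preserving $F$ yields $X=F(m)\in\mathbf{Set}^E$ together with an isomorphism $X\circ\sigma\cong X$ squaring to the identity; this is the image of the unique arrow $m\to m$ over $\sigma$.
\item Identify $X(1)$ and $X(2)$ with $F(o)$ via the two arrows $m\to o$. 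The isomorphism then becomes an involution of $X(0)$ exchanging the two legs.
\item A strict transformation instead forces $X\circ\sigma=X$, i.e.\ $X(0\to 1)=X(0\to 2)$.
\end{enumerate}
So $Hom(S_t,D)\simeq\mathbf{Set}^{\mathbb{2}}$, whereas $\mathbf{DepSet}^S$ is the category of graphs with an edge-reversing involution, which is not equivalent to it. On sites, the strict coend identifies $(0\to1,m)$ with $(0\to2,m)$ instead of adjoining an involution.

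\textbf{Repair.} The statement still holds. You must either replace $S_t$ by a flexible replacement (equivalently, use pseudo-ends and pseudo-coends throughout), or avoid strictification altogether. For the latter:
\begin{enumerate}
\item Let $\Gamma:(S^\otimes)^{op}\to\mathbf{Cat}$ send $s$ over $J$ to the poset $J$, and $u:s\to s'$ over $k$ to $k_t$. This is strictly functorial since $(k'\circ k)_t=k_t\circ k'_t$.
\item A functor $F:S^\otimes\to\mathbf{DepSet}^*$ over $\Lambda(\mathbf{Dep})^\dagger$ amounts to presheaves $X_s\in\mathbf{Set}^{J}$ and maps $X_s\circ k_t\to X_{s'}$, functorial in $u$.
\item That is the same as a functor $\int\Gamma\to\mathbf{Set}$ on the Grothendieck construction, whose arrows $(s,j)\to(s',j')$ are pairs $(u,\ j\le k_t(j'))$.
\item Preserving cocartesian arrows means exactly inverting the arrows $(u,\mathrm{id})$ with $u$ cocartesian. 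Vertical transformations are just natural transformations.
\end{enumerate}
Hence $\mathbf{DepSet}^S\simeq\mathbf{Set}^{\mathcal{G}}$, where $\mathcal{G}$ is the (essentially small) localization of $\int\Gamma$ at these arrows. In the example, $(\sigma,\mathrm{id})$ becomes precisely the involution of $(m,0)$ that the strict computation kills.
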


\begin{example}
 The category of dependent sets for the signature corresponding to the operads of categories is easily computed to be the presheaf topos $\mathbf{Grph}$ of (non-reflexive) graphs.
\end{example}

The following theorem combines the results analogous to \cref{free_algebras}, \cref{monadicity} and \cref{algebras_loc_presentable}. The proofs are essentially the same.

\begin{theorem}
\label{main}
Given a dependent operad $O$ with underlying signature $S$, the forgetful functor $V^O$ admits a left adjoint $F^O :\mathbf{DepSet}^S \to \mathbf{Alg}_{O}$, yielding a monadic adjunction. 
 
 Moreover, $\mathbf{Alg}_{O}$ is locally presentable.
\end{theorem}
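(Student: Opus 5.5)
The plan is to transport the affine arguments of \cref{free_algebras}, \cref{monadicity} and \cref{algebras_loc_presentable} verbatim, checking at each step that only formal features of $\mathbf{DepSet} \to \Lambda(\mathbf{Dep})$ are used. The first step is to record that $\mathbf{DepSet} \to \Lambda(\mathbf{Dep})$ is a cocomplete fibration in the sense of \cref{relative_cocomplete}. Its fibres are the presheaf categories $\mathbf{Set}^{I}$, which are cocomplete. For a morphism $k : I \to M(J)$, the reindexing $\eta_J^* k_*$ admits a right adjoint $k^* (\eta_J)_*$, so it preserves colimits. Then \cite[Proposition 2.4.4]{harpaz2015grothendieck} applies exactly as in the affine case. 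I would note here that nothing about opfibrations was used in the affine case at this step. All that matters is that $k_*$ is a left Kan extension between presheaf categories on finite posets and $\eta_J^*$ is restriction.

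Next, given a dependent set $C : S^\otimes \to \mathbf{DepSet}^*$, I would define $F^O(C)(x)$ for $x \in O^\otimes$ as the relative colimit of the diagram $\iota \downarrow x \to S^\otimes \to \mathbf{DepSet}$, taken over the cocone $(\iota\downarrow x)^\triangleright \to \Lambda(\mathbf{Dep})$ with apex the image of $x$. Functoriality comes from the inclusions $\iota \downarrow x \to \iota \downarrow y$. I then build the unit and counit through the universal property of initial lifts and check the triangle identities as for pointwise left Kan extensions. This yields $F^O \dashv V^O$.

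For monadicity I would use Beck's theorem as in \cref{monadicity}. Conservativity is immediate because $S^\otimes \subset O^\otimes$ is identity-on-objects, being the pullback along the wide inclusion $\Lambda(\mathbf{Dep})^\dagger \subset \Lambda(\mathbf{Dep})$. For creation of $V^O$-split coequalizers, take a split coequalizer $V^OB \to C_0$ and an arrow $x \to y$ over $k$. Absoluteness of split coequalizers makes $Hom_{\mathbf{Set}^J}(\eta_J^*k_* C_0(x), C_0(y))$ an equalizer of the corresponding hom-sets for $A$ and $B$. The functor $\eta_J^*k_*$ preserves the split coequalizer, since any functor does, and this is the point to check carefully in the non-affine setting. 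Hence the actions of $B$ descend to $C_0$, and functoriality and the universal property follow from uniqueness of the induced maps.

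Finally, for local presentability I would argue as follows. $\mathbf{DepSet}^S$ is a presheaf topos by the preceding proposition, hence locally presentable. The monad $V^OF^O$ is accessible, because $F^O$ is pointwise a colimit construction and $V^O$ preserves filtered colimits, which are computed fibrewise in the presheaf categories and preserved by the left adjoints $\eta_J^*k_*$. The category of algebras of an accessible monad on a locally presentable category is locally presentable. I expect the main obstacle to be this accessibility and filtered-colimit preservation of $V^O$. One must check that filtered colimits in $\mathbf{Alg}_O$ are created fibrewise, which again rests on $\eta_J^* k_*$ being cocontinuous for every map $k$ of $\Lambda(\mathbf{Dep})$, including the duplicating ones.
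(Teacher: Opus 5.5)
Your route is the paper's: you transport \cref{free_algebras}, \cref{monadicity} and \cref{algebras_loc_presentable}. However, the fact you single out as ``all that matters'' is wrong. In the paper $k_*$ is the \emph{right} Kan extension, not the left one. Two things force this. First, the composition law uses the adjunction $\eta_J^*\dashv(\eta_J)_*$ together with $(\mu_K)_*(Tk_1)_*(\eta_J)_*\simeq(k_1)_*$. Second, the intended semantics needs it: over the fold map $\mathbb{1}\amalg\mathbb{1}\to\mathbb{1}$ the pushforward is $(X_1,X_2)\mapsto X_1\times X_2$, and over $\emptyset\to\mathbb{1}$ it is the terminal set. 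This is what makes $comp_b$ and $id_b$ a multiplication and a unit; left Kan extensions would give $\amalg$ and $\emptyset$.

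So $\eta_J^*k_*$ has a \emph{left} adjoint $k^*(\eta_J)_!$, not a right adjoint, and it is not cocontinuous. The binary product does not preserve coproducts, and the nullary one does not preserve the initial object. In your first step this is harmless: as in the paper, relative colimits in the sense of \cref{relative_cocomplete} only use cocompleteness of the fibres $\mathbf{Set}^I$. Your monadicity step is also fine, because split coequalizers are absolute. But the local presentability step, which you explicitly rest on cocontinuity of every $\eta_J^*k_*$, fails as written.

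The repair is easy. $\eta_J^*k_*X(j)$ is the limit of $X$ over the comma poset $\eta_J(j)\downarrow k$, which is a subposet of the finite poset $I$. So $\eta_J^*k_*$ is a finite-limit construction and commutes with filtered colimits in $\mathbf{Set}$. That is exactly what you need to compute filtered colimits in $\mathbf{Alg}_O$ fibrewise; cocartesian edges over inert maps are preserved because $k_t^*$ is a restriction functor. Hence $V^O$, and therefore $V^OF^O$, is finitary. Then $\mathbf{Alg}_O$ is locally finitely presentable as the category of algebras of a finitary monad on a presheaf topos. With this fix your argument supplies something the paper omits: it records local presentability as a bare corollary of monadicity, which by itself would not suffice.

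One caution: the same mistaken cocontinuity would make functoriality of $F^O(C)$ along non-inert arrows look automatic. Defining $F^O(C)(x)\to F^O(C)(y)$ over $k$ requires $\eta_J^*k_*$ to commute with the particular colimit over $\iota\downarrow x$. That does not follow from any general cocontinuity and has to be argued separately; the paper leaves this implicit as well.
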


\begin{example}
 In the case of the operads for categories, we recover the usual free category functor from the category of graphs, and the associated monadicity of categories over graphs.
\end{example}

\begin{remark}
 Unlike affine dependent operads, whose (essentially) algebraic theories should not allow variable duplication in some sense, we expect the categories of algebras on a general dependent operad of this section to correspond exactly to the locally finitely presentable categories: we believe there is a canonical way to translate a general algebraic theory into a dependent operad.
\end{remark}

\begingroup
\setlength{\emergencystretch}{.5em}
\RaggedRight
\printbibliography
\endgroup

\end{document}